\numberwithin{equation}{section}
\newtheorem{theoremcounter}{theoremcounter}[section]
\newtheorem{thmstarcounter}{thmstarcounter}
\newtheorem{corollary}[theoremcounter]{Corollary}
\newtheorem{lemma}[theoremcounter]{Lemma}
\newtheorem{proposition}[theoremcounter]{Proposition}
\newtheorem{theorem}[theoremcounter]{Theorem}
\newtheorem{thmstar}[thmstarcounter]{Theorem}
\theoremstyle{definition}
\newtheorem{definition}[theoremcounter]{Definition}
\newtheorem{example}[theoremcounter]{Example}
\newtheorem{remark}[theoremcounter]{Remark}
\newcommand{\cC}{\ensuremath{\mathcal{C}}}
\newcommand{\cN}{\ensuremath{\mathcal{N}}}
\newcommand{\cP}{\ensuremath{\mathcal{P}}}
\newcommand{\cU}{\ensuremath{\mathcal{U}}}
\newcommand{\cV}{\ensuremath{\mathcal{V}}}
\newcommand{\rE}{\ensuremath{\mathrm{E}}}
\newcommand{\rL}{\ensuremath{\mathrm{L}}}
\newcommand{\rM}{\ensuremath{\mathrm{M}}}
\newcommand{\rS}{\ensuremath{\mathrm{S}}}
\newcommand{\wh}{\widehat}
\newcommand{\amid}{\ensuremath{\, | \,}}
\newcommand{\eqstop}{\ensuremath{\, \text{.}}}
\newcommand{\eqcomma}{\ensuremath{\, \text{,}}}
\newcommand{\NN}{\ensuremath{\mathbb{N}}}
\newcommand{\ZZ}{\ensuremath{\mathbb{Z}}}
\newcommand{\CC}{\ensuremath{\mathbb{C}}}
\newcommand{\Hom}{\ensuremath{\mathop{\mathrm{Hom}}}}
\newcommand{\id}{\ensuremath{\mathrm{id}}}
\newcommand{\ra}{\ensuremath{\rightarrow}}
\newcommand{\hra}{\ensuremath{\hookrightarrow}}
\newcommand{\thra}{\ensuremath{\twoheadrightarrow}}
\newcommand{\GL}[2]{\ensuremath{\mathrm{GL}({#1}, #2)}}
\newcommand{\End}{\ensuremath{\mathrm{End}}}
\newcommand{\ot}{\ensuremath{\otimes}}
\newcommand{\Cstar}{\ensuremath{\text{C}^*}}
\newcommand{\Wstar}{\ensuremath{\text{W}^*}}
\newcommand{\bo}{\ensuremath{\mathcal{B}}}
\newcommand{\FdHilb}{\ensuremath{\mathrm{FdHilb}}}
\newcommand{\cont}{\ensuremath{\mathrm{C}}}
\newcommand{\Linfty}{\ensuremath{{\offinterlineskip \mathrm{L} \hskip -0.3ex ^\infty}}}
\newcommand{\Ltwo}{\ensuremath{{\offinterlineskip \mathrm{L} \hskip -0.3ex ^2}}}
\newcommand{\lspan}{\ensuremath{\mathop{\mathrm{span}}}}
\newcommand{\Ad}{\ensuremath{\mathop{\mathrm{Ad}}}}
\newcommand{\freegrp}[1]{\ensuremath{\mathbb{F}}_{#1}}
\newcommand{\paarpart}{\sqcap}
\newcommand{\baarpartbaustein}{\rotatebox{180}{$\sqcap$}}
\newcommand{\baarpart}{
\mathrel{\vcenter{\offinterlineskip \hbox{$\baarpartbaustein$}}}}
\newcommand{\singleton}{\uparrow}
\newcommand{\doublesingleton}{\singleton\otimes\singleton}
\newcommand{\vierpart}{
\mathrel{\offinterlineskip
\hskip0ex\hbox{$\sqcap$}\hskip -.44ex\hbox{$\sqcap$} \hskip -0.44ex\hbox{$\sqcap$}}}
\newcommand{\primarypart}{
\mathrel{\vcenter{\offinterlineskip
\hbox{$\baarpart$} \vskip -1.3ex \hbox{\hskip1.3ex$/$\hskip-1.2ex$-$} \vskip -1.2ex \hbox{\hskip2.2ex $\paarpart$}}}}
\newcommand{\midmid}{
\mathrel{\vcenter{\offinterlineskip
\vskip -0.2ex \hbox{$\shortmid$} \vskip -0.75ex \hbox{$\shortmid$}}}}
\newcommand{\halflibpart}{
\mathrel{\offinterlineskip
\hbox{$\bigtimes$}\hskip -1.55ex \hbox{$\midmid$} \hskip 1ex}}
\newcounter{PartitionDepth}
\newcounter{PartitionLength}
\newcommand{\uppartii}[3]{
 \begin{picture}(#3,#1)
 \setcounter{PartitionLength}{#3-#2}
 \setcounter{PartitionDepth}{#1}
 \put(#2,0){\line(0,1){#1}}     
 \put(#3,0){\line(0,1){#1}}
 \put(#2,\thePartitionDepth){\line(1,0){\thePartitionLength}}
 \end{picture}}
\newcommand{\uppartiii}[4]{
 \begin{picture}(#4,#1)
 \setcounter{PartitionLength}{#4-#2}
 \setcounter{PartitionDepth}{#1}
 \put(#2,0){\line(0,1){#1}}
 \put(#3,0){\line(0,1){#1}}
 \put(#4,0){\line(0,1){#1}}
 \put(#2,\thePartitionDepth){\line(1,0){\thePartitionLength}} 
 \end{picture}}
\newcommand{\diag}{\ensuremath{\mathrm{diag}}}
\begin{document}

\author{Sven Raum and Moritz Weber}

\begin{center}
{\LARGE \bf {Easy quantum groups and quantum subgroups of a semi-direct product quantum group}}

\bigskip

{\sc by Sven Raum$^{(1)}$ and Moritz Weber%$^{(2)}$
  \setcounter{footnote}{1}
  \footnotetext{Supported by KU Leuven BOF research grant OT/08/032 and by ANR Grant NEUMANN}
}
\end{center}

\renewcommand{\thefootnote}{}
\footnotetext{\textit{MSC classification:} 20G42, 46L54, 16T30}
\footnotetext{\textit{Keywords:} orthogonal quantum groups, easy quantum groups, de Finetti theorems}

\begin{abstract}
  We consider homogeneous compact matrix quantum groups whose fundamental corepresentation matrix has entries which are partial symmetries with central support.  We show that such quantum groups have a simple presentation as semi-direct product quantum groups of a group dual quantum group by an action of a permutation group.  This general result allows us to completely classify easy quantum groups with the above property by certain reflection groups.  We give four applications of our result. First, there are uncountably many easy quantum groups.  Second, there are non-easy homogeneous orthogonal quantum groups.  Third, we study operator algebraic properties of the hyperoctahedral series.  Finally, we prove a generalised de Finetti theorem for easy quantum groups in the scope of this article.
\end{abstract}

\section{Introduction}
\label{sec:introduction}

Easy quantum groups were introduced by Banica and Speicher in \cite{banicaspeicher09}, in the context of Wang's universal quantum groups \cite{wang95}.  Via Speicher's partitions, this class of quantum groups has a natural link to free probability theory.  Together with \cite{raumweber13-complete-classification-2}, the present paper completes the classification of easy quantum groups, which was started in \cite{banicaspeicher09,banicacurranspeicher09_2,weber13-classification}.

Compact quantum groups were introduced by Woronowicz in \cite{woronowicz87,woronowicz98}.  As they have a natural set of axioms and satisfy a version of Tannaka-Krein duality \cite{woronowicz88}, they are by now an established generalisation of compact groups to a non-commutative setting.  There are mainly three sources of examples of compact quantum groups.  First, there are $q$-deformations of compact simple Lie groups described in \cite{jimbo85,drinfeld86,woronowicz88,rosso90-version-duke}.  Second, motivated by a question of Connes, Wang described the quantum symmetry group of $n$ points in \cite{wang98}.  This later led Goswami to define quantum isometry groups in \cite{goswami09}.  Third, Banica and Speicher defined a combinatorial class of quantum groups, which they called easy quantum groups \cite{banicaspeicher09}.  This article is concerned with the latter class of quantum groups.

In 1995 Wang introduced the universal orthogonal and the universal unitary quantum groups $O_n^+$ and $U_n^+$ \cite{wang95}. Later in 1998 he introduced the quantum permutation groups $\rS_n^+$ \cite{wang98}.  After work of Banica and others \cite{banica96,banica99,banicabichoncollins07} introducing the free hyperoctahedral quantum group $H_n^+$, it became clear that all these quantum groups should be considered as a result of a ``liberation process'' of classical groups, which was formalised in \cite{banicaspeicher09}.  It resembles the passage from classical probability theory to free probability theory via Speicher's partitions.  The combinatorial point of view taken in \cite{banicaspeicher09} naturally gave rise to a new class of quantum groups which are described by Speicher's partitions.  Banica and Speicher called them \emph{easy quantum groups}.  By their very definition easy quantum groups are related to free probability theory.  This intuition was confirmed by several de Finetti type theorems, identifying easy quantum groups as the correct class of symmetries of certain distributions \cite{koestlerspeicher09-de-finetti,banicacurranspeicher12-finetti}.  In \cite{banicaspeicher09,weber13-classification} the classification of two subclasses of easy quantum groups, namely easy groups and free quantum groups (those easy quantum groups corresponding to non-crossing partitions), was settled.  Moreover, in \cite{banicacurranspeicher09_2,weber13-classification} further classification results for easy quantum groups were obtained, showing that there are exactly 13 easy quantum groups outside a family called \emph{hyperoctahedral easy quantum groups}.  At the same time, it was shown that there is at least a countable number of hyperoctahedral easy quantum groups, leaving their complete classification as an open problem.

We complete the classification of easy quantum groups by classifying hyperoctahedral easy quantum groups in this paper and in \cite{raumweber13-complete-classification-2}. 

We identify a dividing line between easy quantum groups which are quantum subgroups of the semi-direct product quantum groups $\Cstar(\ZZ_2^{*n}) \Join \cont(\rS_n)$ and those which are not.  While the structure of the former class of easy quantum groups is governed by algebraic considerations, the study of the latter class remains of combinatorial nature.  Indeed, in \cite{raumweber13-complete-classification-2} we classify by combinatorial arguments the remaining hyperoctahedral easy quantum groups which are not contained in $\Cstar(\ZZ_2^{*n}) \Join \cont(\rS_n)$.  It turns out that there are only countably many of them.  In contrast, in the present paper we completely classify homogeneous quantum subgroups of $\Cstar(\ZZ_2^{*n}) \Join \cont(\rS_n)$ and only afterwards we identify easy quantum groups among them.  A quantum group is called homogeneous, if it contains the permutation group $\rS_n$ as a quantum subgroup (see Section \ref{sec:homogeneou-quantum-groups} for a precise definition).  The homogeneity assumption is natural for any classification result for compact quantum groups, since every Lie group admits many embeddings into $O_n$ -- these subgroups have to be excluded to expect reasonable classification results.  The algebraic approach of this paper allows us to understand the easy quantum groups in question more profoundly, as is demonstrated by several applications.  Moreover, it is a complete classification result for homogeneous quantum subgroups of a naturally defined quantum group.  

 Already in the preprint \cite{raumweber12}, we obtained some of the results presented in this paper by means of combinatorial methods.  The present article however achieves a complete description of the quantum groups treated there, making use of algebraic methods.  We refer to \cite{raumweber13-combinatorial-approach} for a concise presentation of the combinatorial approach.

Let us describe the results of this paper in more detail.  We first characterise homogeneous compact matrix quantum groups $(A,u)$ such that all $u_{ij}^2$ are central projections in $A$.  A compact matrix quantum group is called homogeneous, if it admits a morphism $A \ra \cont(\rS_n)$.  The compact matrix quantum groups arising in the next theorem are called semi-direct product quantum groups (see Section \ref{sec:semi-direct-products}).  We denote them by $\Cstar(\Gamma) \Join \cont(\rS_n)$.  They generalise the compact groups $\wh H \rtimes \rS_n$, where $H$ is a discrete abelian group carrying an action of $\rS_n$ by group automorphisms.
\begin{thmstar}[See Theorem \ref{thm:classification-of-qsubgroups}]
  \label{thm:structure}
  Let $(A,u)$ be a homogeneous compact matrix quantum group such that $u_{ij}^* = u_{ij}$ and $u_{ij}^2$ is a central projection in $A$ for all $i,j \in \{1, \dotsc, n\}$.  Then there is a quotient $\ZZ_2^{*n} \thra \Gamma$ whose kernel is invariant under the natural action of $\rS_n$ such that $A$ is a version of $\Cstar(\Gamma) \Join \cont(\rS_n)$.  In particular, we have $A \cong \Cstar(\Gamma) \Join \cont(\rS_n)$ if $A$ is in its maximal version.
\end{thmstar}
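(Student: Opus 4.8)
The plan is to reconstruct the semi-direct product structure from the central projections $p_{ij} := u_{ij}^2$ and from the ``sign'' symmetries $g_i := \sum_j u_{ij}$, and then to read off the $\rS_n$-invariance of the defining relations directly from the comultiplication. Since $u$ is a unitary corepresentation with self-adjoint entries, $\sum_k u_{ik}^2 = \sum_k u_{ki}^2 = 1$, so in each row and each column the central projections $p_{ij}$ sum to $1$; as they are positive and central, $p_{ij} \le 1 - p_{ik}$ for $j \ne k$ forces $p_{ij}p_{ik} = 0$, so each row and column of $(p_{ij})$ is an orthogonal partition of unity. Then $u_{ij}u_{ik} = u_{ij}p_{ij}p_{ik}u_{ik} = 0$ for $j \ne k$, whence $g_i^2 = \sum_{j,k}u_{ij}u_{ik} = \sum_j p_{ij} = 1$; thus each $g_i$ is a self-adjoint unitary, and $g_i p_{ij} = u_{ij}$, so $A$ is generated by the commuting family $\{p_{ij}\}$ together with $\{g_i\}$. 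The commutative central subalgebra $C := \Cstar(p_{ij})$ satisfies the permutation-matrix relations, hence is a quotient of $\cont(\rS_n)$; the homogeneity morphism restricts to a surjection $C \thra \cont(\rS_n)$, and since $\cont(\rS_n)$ is finite-dimensional the composite $\cont(\rS_n) \thra C \thra \cont(\rS_n)$ is an isomorphism. Therefore $C \cong \cont(\rS_n)$, and the minimal central projections $e_\sigma := \prod_i p_{i\sigma(i)}$, $\sigma \in \rS_n$, are nonzero and partition the unit.

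Next I decompose $A$ over $\mathrm{spec}(C) = \rS_n$. As the $e_\sigma$ are central, $A = \bigoplus_{\sigma \in \rS_n} A_\sigma$ with $A_\sigma := e_\sigma A$; from $u_{ij} = g_i p_{ij}$ and $p_{ij}e_\sigma = \delta_{j,\sigma(i)}e_\sigma$ one finds $u_{ij}e_\sigma = \delta_{j,\sigma(i)}g_i^{(\sigma)}$ with $g_i^{(\sigma)} := g_i e_\sigma$, so $A_\sigma$ is generated by the self-adjoint unitaries $g_i^{(\sigma)}$ and is a $C^*$-completion of $\CC[\ZZ_2^{*n}/N_\sigma]$, where $N_\sigma := \ker(\ZZ_2^{*n} \to A_\sigma)$. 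The key computation is $\Delta(p_{ij}) = \sum_k p_{ik}\otimes p_{kj}$ and $\Delta(g_i) = \sum_k u_{ik}\otimes g_k = \sum_k g_i p_{ik}\otimes g_k$, from which, for a word $w = g_{i_1}\cdots g_{i_m}$ with $\alpha\cdot w := g_{\alpha(i_1)}\cdots g_{\alpha(i_m)}$ and $w^{(\sigma)} := w\,e_\sigma$, centrality of the $p$'s and $p_{i_t k_t}e_\alpha = \delta_{k_t,\alpha(i_t)}e_\alpha$ give
\[
  \Delta(w^{(\sigma)}) \;=\; \sum_{\alpha\beta = \sigma} w^{(\alpha)} \otimes (\alpha\cdot w)^{(\beta)} \eqstop
\]
Comparing with $\Delta(e_\sigma) = \sum_{\alpha\beta=\sigma}e_\alpha\otimes e_\beta$ under the hypothesis $w \in N_\sigma$ (that is, $w^{(\sigma)} = e_\sigma$) and matching the $(\alpha,\beta)$-summand inside $A_\alpha \otimes A_\beta$, a slice by a state with $\omega(e_\beta)=1$ shows $w^{(\alpha)}$ is a scalar multiple of $e_\alpha$; since $w^{(\alpha)}$ is the image of a group element of $\ZZ_2^{*n}/N_\alpha$ and distinct group elements are linearly independent, this forces $w^{(\alpha)} = e_\alpha$ and likewise $(\alpha\cdot w)^{(\beta)} = e_\beta$ for every factorisation $\alpha\beta = \sigma$. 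This simultaneously yields that $N_\sigma = N_e =: N$ is independent of $\sigma$ and that $N$ is invariant under the action $w \mapsto \alpha\cdot w$ of $\rS_n$.

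Finally, setting $\Gamma := \ZZ_2^{*n}/N$, each fibre $A_\sigma$ is a version of $\Cstar(\Gamma)$, and under the identification $\bigoplus_\sigma \Cstar(\Gamma) \cong \Cstar(\Gamma)\otimes\cont(\rS_n)$ (with the $e_\sigma$ the minimal projections of $\cont(\rS_n)$) the displayed formula together with $\Delta(e_\sigma) = \sum_{\alpha\beta=\sigma}e_\alpha\otimes e_\beta$ is precisely the comultiplication of the semi-direct product quantum group $\Cstar(\Gamma) \Join \cont(\rS_n)$ of Section \ref{sec:semi-direct-products}. Hence $A$ is a version of $\Cstar(\Gamma)\Join\cont(\rS_n)$, and when $A$ is in its maximal version its universal property matches that of the maximal semi-direct product, giving $A \cong \Cstar(\Gamma)\Join\cont(\rS_n)$. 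I expect the main obstacle to be the middle step: extracting both ``all $N_\sigma$ coincide'' and ``$N$ is $\rS_n$-invariant'' cleanly from the single comultiplication identity, in particular justifying the elementary-tensor matching inside the possibly infinite-dimensional fibres $A_\sigma$ by slice maps and the linear independence of group elements, and pinning down the precise $C^*$-completion so as to control the word \emph{version} in the statement.
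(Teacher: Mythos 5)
Your construction of the fibres $A_\sigma = e_\sigma A$, the identification $C = \Cstar(u_{ij}^2) \cong \cont(\rS_n)$ via homogeneity, and the master identity $\Delta(w^{(\sigma)}) = \sum_{\alpha\beta=\sigma} w^{(\alpha)} \ot (\alpha\cdot w)^{(\beta)}$ are all correct, and they are in essence a repackaging of the paper's own computation (your cutting by $e_\alpha \ot e_\beta$ is the paper's multiplication by the fixed-index projections leading to equation (\ref{eq:tensor-product-equality}); your identity fibre $A_e$ is the paper's diagonal quotient). The genuine gap is at the decisive step. From $w^{(\alpha)} \ot (\alpha\cdot w)^{(\beta)} = e_\alpha \ot e_\beta$, a slice by a state only yields $w^{(\alpha)} = \lambda e_\alpha$ and $(\alpha\cdot w)^{(\beta)} = \lambda^{-1} e_\beta$ for some $\lambda \in \CC$ with $|\lambda| = 1$, and your way of excluding $\lambda \neq 1$ --- ``$w^{(\alpha)}$ is the image of a group element and distinct group elements are linearly independent'' --- is false, and in fact circular. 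The homomorphism $\CC[\ZZ_2^{*n}/N_\alpha] \ra A_\alpha$ need not be injective, so $A_\alpha$ need not be a completion of the group algebra: nothing established up to that point prevents a fibre from being, say, $A_\alpha = \CC$ with $g_1^{(\alpha)} = 1$ and $g_2^{(\alpha)} = -1$, where the distinct classes of $e$ and $a_1a_2$ in $\ZZ_2^{*n}/N_\alpha$ map to the linearly dependent elements $1$ and $-1$. Linear independence of the image of $\ZZ_2^{*n}/N_\alpha$ in $A_\alpha$ is equivalent to the assertion that no nontrivial word acts as a scalar different from $1$ in a fibre, which is exactly the phase obstruction you need to rule out; assuming it begs the question. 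This is not a removable formality: if one drops homogeneity, cocycle-twisted examples realise precisely such phases (this is the ``missing hypothesis'' the paper credits Bichon with), so any correct argument must invoke homogeneity at this very point --- and your proof, as written, uses homogeneity only to see that the $e_\sigma$ are nonzero.

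The repair is to slice the second leg not with an arbitrary state but with $\mathrm{ev}_\beta \circ \psi$, where $\psi: A \ra \cont(\rS_n)$ is the homogeneity morphism sending $u_{ij}$ to the fundamental corepresentation of $\cont(\rS_n)$, and $\mathrm{ev}_\beta$ is evaluation at $\beta \in \rS_n$. Since the rows of the fundamental corepresentation of $\cont(\rS_n)$ sum to $1$, one has $\psi(g_i) = \psi\bigl(\sum_j u_{ij}\bigr) = 1$, hence $\psi\bigl((\alpha\cdot w)^{(\beta)}\bigr) = \psi(e_\beta) = \delta_\beta$, the indicator function of $\{\beta\}$; therefore $(\mathrm{ev}_\beta \circ \psi)$ takes the value $1$ on both $(\alpha\cdot w)^{(\beta)}$ and $e_\beta$, and the slice gives $w^{(\alpha)} = e_\alpha$ on the nose, with no scalar ambiguity. (The paper faces the same issue at equation (\ref{eq:tensor-product-equality}), where the passage to the componentwise equality likewise requires this use of $\psi$.) With this fix your argument closes: equality of all the $N_\sigma$ and the $\rS_n$-invariance of $N$ follow simultaneously from the master identity --- which is a mild structural advantage over the paper, where $\rS_n$-invariance is a separate step --- and the final identification of the version is then completed, as in the paper, by checking that $(\pi \ot \psi)\circ\Delta$ inverts the natural morphism $\Cstar(\Gamma) \ot \cont(\rS_n) \ra A$.
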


Applying the previous classification to hyperoctahedral easy quantum groups, we obtain the following structural result.  A strongly symmetric reflection group is a quotient $\ZZ_2^{*n} \ra \Gamma$ whose kernel is invariant under any \emph{identification of letters}  (See Definition \ref{def:strong-symmetric-semigroup}).

\begin{thmstar}[See Theorem \ref{thm:classification-easy-quantum-groups-bicrossed-product-case}]
  \label{thm:classification-easy-quantum-groups-bicrossed-product-case:introduction}
Let $(A,u)$ be an easy quantum group associated with the category of partitions $\cC$.  Assume that $\primarypart \in \cC$.  Then $A \cong \Cstar(\Gamma) \Join \cont(\rS_n)$ for some strongly symmetric reflection group $\Gamma$ on $n$ generators.  Denoting the generators of $\Gamma$ by $g_1, \dotsc, g_n$ and the fundamental corepresentation of $\cont(\rS_n)$ by $(p_{ij})$, the fundamental corepresentation of $A$ is identified with $(u_{g_i}p_{ij})$.

Moreover, every strongly symmetric reflection group arises this way.
\end{thmstar}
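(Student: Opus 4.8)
The plan is to read the categorical hypothesis $\primarypart \in \cC$ as the intertwiner form of the conditions in Theorem~\ref{thm:structure}, invoke that theorem for the structural isomorphism, and then upgrade the $\rS_n$-invariance of the kernel produced there to the strictly stronger strong-symmetry condition. The converse is an application of Woronowicz's Tannaka--Krein theorem in which strong symmetry is exactly the input needed to force easiness. The whole argument thus rests on a dictionary between reduced words in $\ZZ_2^{*n}$ and partitions, compatible simultaneously with the group relations and the category operations.

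First I would check that $(A,u)$ falls under the scope of Theorem~\ref{thm:structure}. Self-adjointness $u_{ij}^* = u_{ij}$ is part of the definition of an orthogonal easy quantum group, and homogeneity is automatic, since every easy quantum group contains $\rS_n$ and hence admits a morphism $A \ra \cont(\rS_n)$. It remains to extract from $\primarypart$ that each $u_{ij}^2$ is a central projection. Reading the partition $\primarypart$ through its associated intertwiner $T_{\primarypart}$ and the Banica--Speicher correspondence between partitions and relations, the relation it encodes is precisely the centrality $u_{ij}^2 u_{kl} = u_{kl} u_{ij}^2$; separately I would verify that $\primarypart \in \cC$ forces $\cC$ to be hyperoctahedral, i.e.\ $\vierpart \in \cC$, so that $u_{ij}^2$ is a self-adjoint idempotent. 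Thus all hypotheses of Theorem~\ref{thm:structure} are met.

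Applying Theorem~\ref{thm:structure} yields a quotient $\ZZ_2^{*n} \thra \Gamma$ with $\rS_n$-invariant kernel together with an isomorphism $A \cong \Cstar(\Gamma) \Join \cont(\rS_n)$ under which $u_{ij} = u_{g_i} p_{ij}$; here one uses that an easy quantum group is defined by a universal $\Cstar$-algebra and is therefore in its maximal version. The substantive step is to promote the $\rS_n$-invariance of the kernel to strong symmetry, that is, invariance under every identification of letters in the sense of Definition~\ref{def:strong-symmetric-semigroup}. The mechanism is that $\cC$ is closed under all category operations, not merely under the relabelling coming from $\rS_n$. I would set up a dictionary sending a reduced word $w = g_{i_1} \dotsm g_{i_k}$ that is trivial in $\Gamma$ to a partition lying in $\cC$, and show that identifying two letters corresponds on the partition side to composing with a block-merging partition already present in the category. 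Closure of $\cC$ then forces the set of trivial words, i.e.\ the kernel, to be stable under letter identifications, which is exactly strong symmetry. Making this translation precise, so that it respects the group relations and the category operations at once, is the main obstacle.

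For the converse I would start from an arbitrary strongly symmetric reflection group $\Gamma$ with generators $g_1, \dotsc, g_n$, form $A = \Cstar(\Gamma) \Join \cont(\rS_n)$, and set $u_{ij} = u_{g_i} p_{ij}$. The task is to show $(A,u)$ is easy with $\primarypart$ in its category. Running the dictionary in reverse, I would produce from the trivial words of $\Gamma$ a candidate set $\cC$ of partitions and compute the intertwiner spaces $\Hom(u^{\ot k}, u^{\ot l})$, showing they are spanned by the maps $T_p$ with $p \in \cC$. Strong symmetry is precisely what makes this span closed under all category operations, so that $\cC$ is a genuine category of partitions and $(A,u)$ is easy; the centrality and idempotency relations hold in $A$ by construction, so the intertwiner $T_{\primarypart}$ is an intertwiner of $u$ and hence $\primarypart \in \cC$. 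Tannaka--Krein duality then identifies $(A,u)$ with the easy quantum group attached to $\cC$, closing the correspondence in both directions.
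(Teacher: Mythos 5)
Your skeleton matches the paper's: Proposition \ref{prop:description-hyperoctahedral-quantum-groups}(ii) places $(A,u)$ in the scope of Theorem \ref{thm:classification-of-qsubgroups}, that theorem produces $A \cong \Cstar(\Gamma) \Join \cont(\rS_n)$ with $u_{ij} = u_{g_i}p_{ij}$ and an $\rS_n$-invariant kernel, and the remaining work is to upgrade $\rS_n$-invariance to $\sSym_n$-invariance and to prove the converse. But at both points you yourself flag as substantive, there are genuine gaps, and the mechanisms you sketch are not the ones that make the proof work. The ``dictionary'' is precisely Lemma \ref{lem:presentation-of-diagonal-subgroups}: the kernel of $\ZZ_2^{*n} \thra \Gamma$ equals $F_n(\cC) = \{w(p,i) \amid p \in \cC(k),\ i \text{ compatible}\}$, which the paper proves by writing the defining relations $T_p u^{\ot k} = T_p$, $p \in \cC(k)$, in coordinates and passing to the diagonal quotient (together with Lemma \ref{lem:FC-is-a-group}, showing $F_n(\cC)$ is a normal subgroup). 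You name this translation as ``the main obstacle'' but give no argument for it, and nothing in your proof starts without it. Moreover, once the kernel is identified as $F_n(\cC)$, your block-merging mechanism is both unnecessary and itself unproven: compatible labellings are allowed to repeat letters, so an identification of letters $\phi$ sends $w(p,i)$ to $w(p,\phi_*(i))$, where $\phi_*(i)$ is compatible with the \emph{same} partition $p$; hence $\sSym_n$-invariance is immediate, with no composition by ``block-merging partitions'' needed. Your mechanism would only be forced if your dictionary were ``$w$ trivial iff $\ker(w) \in \cC$'', but then you must prove that $\cC$ is closed under coarsening (merging blocks while keeping all points), a nontrivial combinatorial lemma the paper never uses.

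The converse has a similar gap. Computing $\Hom(u^{\ot k}, u^{\ot l})$ for $\Cstar(\Gamma) \Join \cont(\rS_n)$ and showing these spaces are \emph{spanned} by the maps $T_p$ is exactly the hard inclusion (no intertwiners beyond the partition maps), and you only assert it; closure of the candidate span under category operations gives a tensor category and hence, via Tannaka--Krein, \emph{some} easy quantum group, but it does not by itself identify that quantum group with $(A,u)$. The paper avoids this computation entirely: given an $\sSym_n$-invariant normal subgroup $N \leq \ZZ_2^{*n}$, Theorem \ref{thm:range-of-F} constructs a category of partitions $\cC$ (rotations of $\ker(i)$ for words $a_{i_1} \dotsm a_{i_k} \in N$) and verifies closure under tensor product, involution, rotation and composition -- this is where the real combinatorial work sits; Proposition \ref{prop:diagonal-subgroups-restriction-to-group-theoretical-case} then allows one to add $\primarypart$ without changing $F_n$, and the already-established forward direction applied to the easy quantum group $G_\cC(n)$ yields $\cont(G_\cC(n)) \cong \Cstar(\Gamma) \Join \cont(\rS_n)$. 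In short: the paper derives easiness of the semi-direct product indirectly from the forward direction, rather than by a direct intertwiner computation, and you should restructure your converse accordingly.
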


Motivated by this result, we call categories of partitions that contain $\primarypart$ \emph{group-theoretical} categories of partitions.  The associated easy quantum groups are called \emph{group-theoretical} easy quantum groups.

Our results allow us to answer some open questions on easy quantum groups.  First of all we show that there are uncountably many easy quantum groups.

\begin{thmstar}[See Theorem \ref{thm:uncountably-many-easy-quantum-groups}]
  \label{thm:uncountably-many-easy-quantum-groups:introduction}
  There are uncountably many pairwise non-isomorphic easy quantum groups.
\end{thmstar}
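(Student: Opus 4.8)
The plan is to reduce the statement to a purely group-theoretic counting problem by invoking the classification of group-theoretical easy quantum groups (Theorem \ref{thm:classification-easy-quantum-groups-bicrossed-product-case:introduction}). That theorem identifies group-theoretical easy quantum groups with strongly symmetric reflection groups via $\Gamma \mapsto \Cstar(\Gamma) \Join \cont(\rS_n)$, and every strongly symmetric reflection group occurs. Since a category of partitions containing $\primarypart$ produces one quantum group for each $n$, and distinct categories yield non-isomorphic quantum groups for large $n$ by Woronowicz's Tannaka--Krein duality, while the underlying reflection group can moreover be recovered from the quantum group (as the group dual appearing in the semi-direct product decomposition), it suffices to exhibit uncountably many pairwise distinct strongly symmetric reflection groups. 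Concretely, I want uncountably many $\rS_n$-invariant kernels in $\ZZ_2^{\ast n}$ that are in addition stable under every identification of letters.

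To build such a family I would index it by the subsets of a fixed countably infinite set. Writing $g_1, g_2, \dotsc$ for the generators, I would first fix a sequence $(w_k)_{k \geq 1}$ of words, each $w_k$ lying in the kernel of the abelianisation $\ZZ_2^{\ast n} \thra \ZZ_2^n$ and supported on its own block of generators of strictly increasing size, so that no $w_k$ can be turned into another by identifying or permuting letters. For a subset $S \subseteq \NN$ I let $N_S$ be the smallest strongly symmetric normal subgroup containing $\{ w_k : k \in S \}$ — that is, the normal closure of all images of these words under the monoid of letter identifications — and set $\Gamma_S = \ZZ_2^{\ast n} / N_S$. By construction each $\Gamma_S$ is a strongly symmetric reflection group, and $S \subseteq S'$ forces $N_S \subseteq N_{S'}$.

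The heart of the argument is to show that $S \mapsto \Gamma_S$ is injective, for which I would prove the detection statement $w_j \in N_S \iff j \in S$. For the nontrivial direction, for each $j$ I would construct an auxiliary strongly symmetric reflection group $Q_j$ together with a surjection $\phi_j \colon \ZZ_2^{\ast n} \thra Q_j$ that is equivariant for letter identifications, kills every $w_k$ with $k \neq j$, yet satisfies $\phi_j(w_j) \neq e$; such a $\phi_j$ then factors through $\Gamma_S$ whenever $j \notin S$, witnessing $w_j \notin N_S$. The main obstacle is precisely the construction of these mutually independent relations and their detecting quotients: naive choices collapse, since imposing the dihedral relations $(g_ig_j)^{k}=e$ only records the greatest common divisor of the chosen exponents, and imposing iterated commutator relations only records a nilpotency class, both of which give a mere countable chain rather than an antichain. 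Spreading each $w_k$ over a disjoint block of generators, and letting $Q_j$ be an abelian-by-(finite reflection) group that sees only block $j$, is what decouples the relations; verifying that these quotients are genuinely stable under all identifications of letters and that $\phi_j(w_j) \neq e$ is the delicate point. Once this is in place, the uncountably many subsets $S \subseteq \NN$ produce uncountably many pairwise distinct $\Gamma_S$, hence, through Theorem \ref{thm:classification-easy-quantum-groups-bicrossed-product-case:introduction}, uncountably many pairwise non-isomorphic easy quantum groups.
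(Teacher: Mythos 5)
Your reduction to counting strongly symmetric reflection groups is the right first move and matches the paper, but the heart of your argument -- the construction of uncountably many distinct $\sSym_\infty$-invariant normal subgroups -- contains a genuine gap, and it is located exactly where you flag the ``delicate point''. The disjoint-block idea cannot work, because the strong symmetric semigroup contains identifications of letters that collapse one block onto another: if $w_k$ is supported on block $B_k$ and $j\neq k$, there are maps $\phi$ sending the letters of $B_k$ arbitrarily into $B_j$ (or even onto a single letter), so the strongly symmetric closure $N_S$ of $\{w_k : k\in S\}$ already contains many nontrivial words supported \emph{inside} block $j$. Consequently a detecting quotient $Q_j$ that ``sees only block $j$'' and is equivariant under letter identifications must kill all these images of the $w_k$, $k\neq j$, and there is no reason it can simultaneously keep $w_j$ alive. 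Invariance under all identifications of letters means your relations behave like group laws (identical relations): membership $w_j\in N_S$ asks whether the law $w_j$ is a consequence of the laws $\{w_k: k\in S\}$, and disjointness of supports is irrelevant to that question, since laws are substitution-invariant by definition. Producing an infinite antichain of independent laws is precisely the hard problem here; as you yourself observe, the natural candidates (dihedral exponents, nilpotency-type relations) only produce countable chains.

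The paper does not attempt such a construction. Instead it converts the problem into the classical theory of varieties of groups: the subgroup $E\leq\ZZ_2^{*\infty}$ of even-length words is free on $a_1a_2, a_1a_3,\dotsc$ and is fully characteristic (Proposition \ref{prop:E-is-fully-characteristic}), so every fully characteristic subgroup of $\freegrp{\infty}\cong E$ is an $\sSym_\infty$-invariant normal subgroup of $\ZZ_2^{*\infty}$. By B.~H.~Neumann's correspondence (Theorem \ref{thm:varieties-and-fully-characteristic-subgroups}) fully characteristic subgroups of $\freegrp{\infty}$ are in lattice anti-isomorphism with varieties of groups, and by Olshanskii's theorem (Theorem \ref{thm:uncountably-many-varieties}) there are continuum many varieties. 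The uncountable antichain you would need is thus imported from Olshanskii's deep result rather than built by hand; your proposal in effect tries to reprove that theorem by an elementary argument, which is why it collapses. To repair your proof along its own lines you would have to either invoke Olshanskii (at which point you recover the paper's argument), or supply a genuinely new construction of uncountably many independent laws -- the one step your proposal leaves unproven.
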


It was an open problem to find an intermediate quantum group $O_n^+ \supset G \supset \rS_n$ which is not an easy quantum group.  Our classification allows us to find such an example.

\begin{thmstar}[See Theorem \ref{thm:non-easy-quantum-group}]
  \label{thm:non-easy-quantum-group:introduction}
  There is an example of an intermediate quantum group $O_n^+ \supset G \supset \rS_n$ such that $G$ is not an easy quantum group.
\end{thmstar}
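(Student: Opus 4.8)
The plan is to reduce the statement to a purely combinatorial gap between two classes of reflection groups and then to exhibit a single group witnessing that gap. By Theorem \ref{thm:structure}, every homogeneous $(A,u)$ with $u_{ij}=u_{ij}^*$ and $u_{ij}^2$ central is a semi-direct product $\Cstar(\Gamma) \Join \cont(\rS_n)$ attached to a symmetric reflection group $\ZZ_2^{*n} \thra \Gamma$, and all of these satisfy $\rS_n \subseteq G \subseteq O_n^+$. By Theorem \ref{thm:classification-easy-quantum-groups-bicrossed-product-case:introduction}, the easy members of this family are \emph{exactly} those attached to \emph{strongly} symmetric reflection groups. Hence it suffices to exhibit a reflection group that is symmetric but not strongly symmetric; the associated $G$ is then the desired non-easy intermediate quantum group.

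To make the non-easiness rigorous, fix such a $\Gamma$ and set $G = \Cstar(\Gamma) \Join \cont(\rS_n)$ with $u_{ij} = u_{g_i} p_{ij}$; these entries are self-adjoint and $u_{ij}^2$ is a central projection. Were $G$ easy with category $\cC$, then, since for easy quantum groups the presence of $\primarypart$ is equivalent to centrality of the $u_{ij}^2$ (this is precisely the defining feature of group-theoretical categories), we would have $\primarypart \in \cC$, so Theorem \ref{thm:classification-easy-quantum-groups-bicrossed-product-case:introduction} would force $G \cong \Cstar(\Gamma') \Join \cont(\rS_n)$ for some strongly symmetric reflection group $\Gamma'$. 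Because the marked quotient $\ZZ_2^{*n} \thra \Gamma$ is recovered from $G$ (through its group-dual part and the identification of the fundamental corepresentation as $(u_{g_i} p_{ij})$ in Theorem \ref{thm:classification-easy-quantum-groups-bicrossed-product-case:introduction}), this yields $\Gamma = \Gamma'$, contradicting that $\Gamma$ is not strongly symmetric. Thus $G$ is not easy.

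To build the witness I would take $n = 4$ and let $N$ be the normal closure of the $\rS_4$-orbit of $w = (g_1 g_2 g_3 g_4)^2$ in $\ZZ_2^{*4}$, so that $\Gamma = \ZZ_2^{*4}/N$ is symmetric by construction. The identification of letters $g_3 \mapsto g_1$ sends $w \in N$ to $(g_1 g_2 g_1 g_4)^2$, and the claim is that this element does not lie in $N$, so that $N$ fails to be strongly symmetric. To certify non-membership I would use the test homomorphism $\ZZ_2^{*4} \to D_4 \subset O(2)$ sending $g_1, g_2$ to the reflection across the line $y = x$ and $g_3, g_4$ to the reflection across the $x$-axis: every relator $(g_{\sigma(1)} g_{\sigma(2)} g_{\sigma(3)} g_{\sigma(4)})^2$ maps to a rotation by a multiple of $2\pi$, hence to the identity, so the map descends to $\Gamma$, whereas $(g_1 g_2 g_1 g_4)^2$ maps to the rotation by $\pi$. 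Since the image $D_4$ is nonabelian, $\Gamma$ is neither abelian nor trivial, so $G$ is a genuine quantum group strictly between $\rS_4$ and $O_4^+$.

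The two reductions carry all the conceptual weight and are immediate from Theorems \ref{thm:structure} and \ref{thm:classification-easy-quantum-groups-bicrossed-product-case:introduction} together with the characterisation of group-theoretical categories; the only real work lies in producing the symmetry versus strong-symmetry gap. The main obstacle is to choose the defining relator so that identifying two of its distinct letters does not trivialise through the relations $g_i^2 = e$ into a consequence of the orbit relations — this is exactly why one needs $n \geq 4$ and a word using all four generators, and why a sufficiently non-degenerate test representation, here $D_4 \subset O(2)$, must be found to witness $(g_1 g_2 g_1 g_4)^2 \notin N$.
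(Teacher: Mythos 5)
Your proof is correct, and its reduction coincides with the paper's: both arguments use Theorem \ref{thm:classification-of-qsubgroups} and Theorem \ref{thm:classification-easy-quantum-groups-bicrossed-product-case}, together with the fact that the marked quotient $\ZZ_2^{*n}\thra\Gamma$ is recovered from the CMQG as its diagonal subgroup (Definition \ref{def:diagonal-subgroup}, Lemma \ref{lem:presentation-of-diagonal-subgroups}), to reduce everything to producing a normal subgroup of $\ZZ_2^{*n}$ that is $\rS_n$-invariant but not $\mathrm{sS}_n$-invariant; your use of the equivalence between $\primarypart\in\cC$ and centrality of the $u_{ij}^2$ (Proposition \ref{prop:description-hyperoctahedral-quantum-groups}) is exactly the intended mechanism. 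Where you genuinely differ is the witness. The paper takes $N=\ker(\ZZ_2^{*n}\thra\rS_{n+1})$, with the generators mapped to transpositions through a common point, so that $\Gamma\cong\rS_{n+1}$; there $\rS_n$-invariance needs a small argument (permuting letters acts by conjugation on the image), and the failure of $\mathrm{sS}_n$-invariance is read off from the relation $(12)(34)(12)(34)=\id$ turning into a $3$-cycle after an identification of letters. You instead take $N$ to be the normal closure of the $\rS_4$-orbit of $(g_1g_2g_3g_4)^2$, which makes $\rS_4$-invariance automatic, at the price of needing a certificate for $(g_1g_2g_1g_4)^2\notin N$; your dihedral representation supplies it correctly, since every permuted relator is the square of a word in two copies of $r$ and two copies of $s$, every such word is a rotation by $0$ or $\pm\pi$, hence has trivial square, so $N\subset\ker\rho$, while $(g_1g_2g_1g_4)^2\mapsto(rs)^2$ is the rotation by $\pi$. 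Both witnesses are valid: the paper's works uniformly for all $n\geq 3$ and exhibits $\Gamma$ as a concrete finite group, while yours settles the existence statement at $n=4$ (all that is asked) and trades the invariance computation for finding a separating representation. The one step I would flesh out is the assertion that the marked quotient is ``recovered from $G$'': note that quotienting $\Cstar(\Gamma)\Join\cont(\rS_n)$ by the relations $u_{ij}=0$ for $i\neq j$ collapses $\cont(\rS_n)$ to $\CC$ and returns $\Cstar(\Gamma)$ with $u_{ii}\mapsto u_{g_i}$, so an isomorphism of CMQGs preserving fundamental corepresentations forces the kernels of $\ZZ_2^{*n}\thra\Gamma$ and $\ZZ_2^{*n}\thra\Gamma'$ to coincide; this is precisely the diagonal-subgroup mechanism the paper itself relies on.
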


In \cite{banicacurranspeicher09_2}, two series of easy quantum groups were introduced. $H_n^{(s)}$ and $H_n^{[s]}$, $s \in \NN \cup \{\infty\}$ are called the hyperoctahedral series and the higher hyperoctahedral series, respectively.  These quantum groups fit into the framework of this article, which allows us to identify them explicitly.  In the light of present interest in operator algebraic properties of free quantum groups, we study the hyperoctahedral series from this point of view.

As a last application of our classification result in Theorem \ref{thm:classification-easy-quantum-groups-bicrossed-product-case:introduction}, we prove a uniform de Finetti theorem for all group-theoretical hyperocatahedral quantum groups in Section \ref{sec:de-finetti-theorems}.  It recovers the de Finetti theorem for $H_n^*$ of Banica, Curran and Speicher \cite{banicacurranspeicher12-finetti}.

\subsection*{Acknowledgements}
The first author thanks Roland Speicher for inviting him to Saarbr\"ucken, where this work was initiated in June 2012.  Both authors thank Stephen Curran for pointing out an error at an early stage of our work on the preprint \cite{raumweber12}, of which this paper is a continuation.  We are very grateful to Teodor Banica and Adam Skalski for useful discussions on the same preprint.  Finally we want to say a warm thank you to Julien Bichon for pointing out a missing hypothesis in Theorem \ref{thm:classification-of-qsubgroups}.

\section{Preliminaries}
\label{sec:preliminaries}

\subsection{Compact matrix quantum groups}
\label{sec:compact-matrix-quantum-groups}

In \cite{woronowicz87,woronowicz91} Woronowicz defined \emph{compact matrix quantum groups} (CMQG), which are the correct analogue of compact Lie groups in the setting of his \emph{compact quantum groups} \cite{woronowicz98}.  A compact matrix quantum group is a unital \Cstar-algebra $A$ with an element $u \in \rM_n(A)$ such that
\begin{itemize}
\item $A$ is generated by the entries of $u$,
\item there is a *-homomorphism $\Delta: A \ra A \ot A$ called \emph{comultiplication} which satisfies $\Delta(u_{ij}) = \sum_k u_{ik} \ot u_{kj}$ for all $1 \leq i,j \leq n$ and
\item $u$ is unitary and its transpose $u^t$ is invertible.
\end{itemize}
The matrix $u$ is called the \emph{fundamental corepresentation} of $(A, u)$.  A morphism between CMQGs $A$ and $B$ is a morphism $\phi:A \ra B$ of the underlying \Cstar-algebras such that $(\phi \ot \id)(u_A)$ is conjugate with $u_B$ by some element in $\GL{n}{\CC}$.  Two CMQGs are called \emph{similar} if they are isomorphic as \Cstar-algebras via a morphism of CMQGs.  They are \emph{isomorphic} if they are isomorphic as \Cstar-algebras via a morphism preserving the fundamental corepresentation.

An important feature of CMQG is the existence of a \emph{Haar state} proved by Woronowicz.
\begin{theorem}[See \cite{woronowicz87}]
  Let $(A,u)$ be a CMQG with comultiplication $\Delta$.  Then there is a unique state, the Haar state, $\phi \in A^*$ such that
  \begin{equation*}
    (\id \ot \phi) \circ \Delta(x) = \phi(x)1 = (\phi \ot \id) \circ \Delta(x)
  \end{equation*}
  for all $x \in A$.
\end{theorem}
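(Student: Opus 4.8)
The plan is to build the Haar state by an averaging procedure on the state space of $A$ and then to promote the invariance obtained to full bi-invariance. First I would rephrase the invariance conditions in terms of the convolution product on the dual. For $\mu,\nu \in A^*$ set $\mu * \nu = (\mu \otimes \nu)\circ\Delta$; since $\Delta$ is a unital $*$-homomorphism, the convolution of two states is again a state, and $*$ is associative by coassociativity of $\Delta$. Because states separate the elements of $A$, the left-invariance condition $(\phi \otimes \id)\Delta(x) = \phi(x)1$ for all $x$ is equivalent to $\phi * \mu = \phi$ for every state $\mu$, while the right-invariance condition $(\id \otimes \phi)\Delta(x) = \phi(x)1$ is equivalent to $\mu * \phi = \phi$ for every state $\mu$. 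So the theorem amounts to producing a state $\phi$ that is absorbing for convolution on both sides, and to showing it is unique.

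For existence I would fix an auxiliary state and average its convolution powers. Since $A$ is generated by the finitely many entries $u_{ij}$ it is separable, hence admits a faithful state $\omega$. Form the Ces\`aro means $\omega_N = \frac1N\sum_{k=1}^N \omega^{*k}$, where $\omega^{*k}$ is the $k$-fold convolution power. All $\omega_N$ are states, and the state space of $A$ is weak-$*$ compact by Banach--Alaoglu, so $(\omega_N)$ has a weak-$*$ cluster point $\phi$. A telescoping computation gives $\omega * \omega_N - \omega_N = \frac1N(\omega^{*(N+1)} - \omega)$, which tends to $0$ in norm; since $\nu \mapsto \omega * \nu$ and $\nu \mapsto \nu * \omega$ are weak-$*$ continuous, passing to the cluster point yields $\omega * \phi = \phi = \phi * \omega$. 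The same shift argument applied to $\omega^{*m}$ shows $\omega^{*m} * \phi = \phi$ for all $m$, and as $\phi$ lies in the weak-$*$ closed convex hull of $\{\omega^{*m}\}$ this gives the idempotence $\phi * \phi = \phi$.

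The decisive step is to upgrade this partial invariance to $\mu * \phi = \phi = \phi * \mu$ for \emph{every} state $\mu$. I expect this to be the main obstacle, and the place where faithfulness of $\omega$ is indispensable: classically it is the statement that the stationary idempotent measure of a full-support random walk on a compact group is the Haar measure of the whole group, and the support argument used there must be replaced by a positivity argument. Concretely I would study the unital completely positive slice map $E = (\id \otimes \phi)\circ\Delta$, which by coassociativity and $\phi * \phi = \phi$ is idempotent and satisfies $\omega \circ E = \phi$; the goal becomes showing that its range is $\CC 1$, i.e. $E(x) = \phi(x)1$. Combining the Kadison--Schwarz inequality $E(x)^*E(x) \le E(x^*x)$ for $E$ with the faithfulness of $\omega$ is meant to force the relevant equality case and to collapse the range to the scalars; this positivity-plus-faithfulness analysis is the technical heart of the argument, and the part I would expect to require the most care.

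Finally, uniqueness is immediate from the convolution reformulation. If $\phi$ and $\phi'$ are both bi-invariant states, then left-invariance of $\phi$ gives $\phi * \phi' = \phi$, while right-invariance of $\phi'$ gives $\phi * \phi' = \phi'$; hence $\phi = \phi'$. In particular the cluster point produced above is independent of all choices once bi-invariance is established.
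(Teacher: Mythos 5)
Your scaffolding is sound and matches the standard construction that the paper cites (the paper itself gives no proof of this theorem; it quotes it from \cite{woronowicz87}): the convolution reformulation, the existence of a faithful state $\omega$ from separability, the Ces\`aro/cluster-point argument yielding $\omega * \phi = \phi = \phi * \omega$ and $\phi * \phi = \phi$, and the uniqueness argument are all correct. But the step you yourself flag as the technical heart is not merely delicate --- as proposed, it cannot be completed. The data you have at that point (a unital $\Cstar$-algebra with a coassociative unital $*$-homomorphism $\Delta$, a faithful state $\omega$, a state $\phi$ with $\omega * \phi = \phi * \omega = \phi = \phi * \phi$, and the idempotent unital completely positive map $E = (\id \ot \phi)\circ\Delta$ with $\omega \circ E = \phi$ and the Schwarz inequality) does \emph{not} imply $E(x) = \phi(x)1$, because all of it is available in examples where the conclusion is false. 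Take $A = \CC^2$ with $\Delta(a) = a \ot 1$ (the function algebra of a two-point left-zero semigroup): this $\Delta$ is a coassociative unital $*$-homomorphism, every state satisfies $\mu * \nu = \mu$, so your Ces\`aro construction yields $\phi = \omega$, the map $E$ is the identity of $A$ (a homomorphism, so Kadison--Schwarz is an equality), and $\omega$ is faithful --- yet the range of $E$ is all of $A$ and no bi-invariant state exists. So positivity plus faithfulness of $\omega$ alone can never collapse the range of $E$ to the scalars; the defining CMQG axioms (unitarity of $u$ and invertibility of $u^t$), which your outline never invokes after the definitions, must enter at exactly this step, and your plan does not say how.

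The missing idea in the proof your sketch is closest to (Woronowicz's) is to run the ``Schwarz-equality plus faithfulness'' mechanism not on the abstract idempotent $E$ but on the unitary corepresentations themselves. Convolution by a state $\mu$ preserves the finite-dimensional coefficient space of each $v = u^{\ot k}$, acting through the contraction matrix $(\id \ot \mu)(v)$, and the Ces\`aro limits exist there by finite-dimensional ergodic theory. The crucial lemma is: if $\omega$ is faithful, $v$ is a unitary corepresentation and $(\id \ot \omega)(v)\xi = \xi$, then $v(\xi \ot 1) = \xi \ot 1$; it is proved by expanding $\omega\bigl((v(\xi \ot 1) - \xi \ot 1)^*(v(\xi \ot 1) - \xi \ot 1)\bigr) = 0$, which uses $v^*v = 1$ in an essential way. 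It follows that $(\id \ot \mu)(v)\xi = \xi$ for \emph{every} state $\mu$, and this is what upgrades $\omega$-invariance to full invariance; the statement on the other side uses the invertibility of $u^t$ (unitarizability of the conjugate corepresentation). Without some version of this lemma --- or of Van Daele's alternative key lemma that $h * \omega = h$ and $0 \le \rho \le \omega$ imply $h * \rho = \rho(1)h$ --- your argument stalls precisely where you predicted it would.
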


A CMQG group $(A,u)$ is called \emph{orthogonal}, if its fundamental corepresentation has self-adjoint entries.  We write $\overline{u} = (u_{ij}^*)$ for the entrywise adjoint of $u$.  One can show that the Haar state $\phi$ of an orthogonal quantum group $(A,u)$ is \emph{tracial}, that is $\phi(xy) = \phi(yx)$ for all $x, y \in A$.  All compact matrix quantum groups considered in this article are orthogonal.

\subsection{Different versions of quantum groups}
\label{sec:versions-of-quantum-groups}

Every CMQG $(A,u)$ contains the \emph{algebra of polynomial functions} $\mathrm{Pol}(A) = *-\mathrm{alg}(u_{ij}, i,j \in \{1, \dotsc, n\})$.  Also the universal enveloping \Cstar-algebra of $\mathrm{Pol}(A)$ is a CMQG, which is called the \emph{maximal version} of $A$.  We say that $A$ is in its maximal version if it is isomorphic to its maximal version.  We write $\cont(G)$ for the maximal version of a CMQG, thinking of $G$ as the quantum group.  In this case we also write $\mathrm{Pol}(G) = \mathrm{Pol}(\cont(G))$.  Any CMQG whose maximal version is isomorphic with $\cont(G)$, is called a version of $\cont(G)$.

The Haar state $\phi$ of $A$ is faithful on $\mathrm{Pol}(G)$. 

For von Neumann algebraic notions we refer the reader to Section \ref{sec:von-Neumann-algebras}.  Taking the weak closure of $\mathrm{Pol}(G)$ in the GNS-representation associated with $\phi$, we obtain the \emph{von Neumann algebraic version} of $A$ denoted by $\Linfty(G)$.  If $A$ is an orthogonal CMQG, then $\phi$ extends to a faithful normal tracial state on $\Linfty(G)$, showing that it is a finite von Neumann algebra.

Let us mention that with every compact matrix quantum group $G$, one can associate a \emph{dual quantum group} $\wh G$, which is a discrete quantum group.  We will only use the notation $\wh G$ and refer to \cite{timmermann08} for more explanation on duality for quantum groups.

\subsection{Tannaka-Krein duality for compact matrix quantum groups}
\label{sec:tannaka-krein-duality}

Let $(A, u)$ be a CMQG with comultiplication $\Delta$.  A \emph{unitary corepresentation matrix} of $A$ is a unitary element $v \in \rM_m(A)$ such that ${\Delta_A(v_{ij}) = \sum_k v_{ik} \ot v_{kj}}$ for all $i,j \in \{1, \dotsc, m\}$.  A morphism between unitary corepresentation matrices $v \in \rM_k(A)$ and $w \in \rM_l(A)$ is a matrix $T \in \rM_{l,k}(\CC)$ such that $T v = w T $.  It is also called an \emph{intertwiner}.  The space of intertwiners between two unitary corepresentation matrices  $v,w$ is denoted by $\Hom(v, w)$.

The \emph{tensor product} of two corepresentation matrices $v \in \rM_k(A)$ and $w \in \rM_l(A)$ is defined as the Kronecker tensor product $(v \ot w)_{(i,i')(j,j')} = v_{ij}w_{i'j'}$.  Denote by $\mathrm{Corep}(A,u)$ the category whose elements are tensor powers $u^{\ot k}$, $k \in \NN$ and whose morphisms are intertwiners.  Then $\mathrm{Corep}(A,u)$ is a \emph{concrete compact tensor \Cstar-category} in the sense of Woronowicz (see \cite{woronowicz87,woronowicz88} or \cite[Chapter 5]{timmermann08}).

We need the following special version of Woronowicz's Tannaka-Krein duality.
\begin{theorem}[See \cite{woronowicz88}]
\label{thm:tannaka-krein-duality}
Any concrete tensor \Cstar-category whose objects are tensor powers of a specified object $x$ is equivalent to the category $\mathrm{Corep}(A,u)$ for some orthogonal compact matrix quantum group $(A,u)$ such that $u$ corresponds to $x$ under this equivalence of categories.  Two orthogonal compact matrix quantum groups $(A,u)$ and $(B,v)$ are similar if and only if the categories $\mathrm{Corep}(A,u)$ and $\mathrm{Corep}(B,v)$ are equivalent over $\FdHilb$ via an equivalence that sends the isomorphism class of $u$ to the isomorphism class of $v$.
\end{theorem}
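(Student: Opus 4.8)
The plan is to prove the two assertions separately: a \emph{reconstruction} statement producing $(A,u)$ from the category, and a \emph{rigidity} statement matching similarity of quantum groups with equivalence of their categories. For reconstruction, let $\catC$ be a concrete tensor \Cstar-category whose objects are the tensor powers $x^{\ot k}$ of a generating object $x$, realised on a finite-dimensional Hilbert space $H \cong \CC^n$ by the forgetful fibre functor. I would build the quantum group as a universal object: let $A$ be the universal unital \Cstar-algebra generated by the entries $u_{ij}$ of a matrix $u \in \rM_n(A)$ subject to the relations
\begin{equation*}
  T u^{\ot k} = u^{\ot l} T \qquad \text{for all } T \in \Hom_\catC(x^{\ot k}, x^{\ot l}) \text{ and } k, l \in \NN .
\end{equation*}
One first checks that $(A,u)$ is an orthogonal CMQG: the imposed relations are compatible with the prescription $\Delta(u_{ij}) = \sum_k u_{ik} \ot u_{kj}$, so a comultiplication exists by universality; unitarity of $u$ and invertibility of $u^t$ follow from the duality (conjugate-equation) morphisms in $\catC$ witnessing that $x$ is self-dual, and in the orthogonal normalisation these same morphisms force $u_{ij}^* = u_{ij}$. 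By construction every morphism of $\catC$ becomes an intertwiner in $\mathrm{Corep}(A,u)$, which gives one inclusion of the morphism spaces.

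The crux, and the main obstacle, is the reverse inclusion: the universal construction must not create spurious intertwiners, so that $\Hom_{\mathrm{Corep}(A,u)}(u^{\ot k}, u^{\ot l}) = \Hom_\catC(x^{\ot k}, x^{\ot l})$ holds exactly. This is the technical heart of Woronowicz's theorem. I would construct a faithful Haar state on $A$ by an averaging procedure, pass to the associated GNS representation, and then identify the algebra of intertwiners with the prescribed finite-dimensional \Cstar-algebra. Concretely, one exploits the semisimplicity of $\catC$ together with the Frobenius reciprocity supplied by the duality morphisms in order to decompose each $u^{\ot k}$ into the irreducibles dictated by $\catC$, and checks by positivity of the traces coming from these duality morphisms that no additional fusion can occur. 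This yields the desired equivalence $\catC \simeq \mathrm{Corep}(A,u)$ carrying $x$ to $u$, and establishes the existence part of the statement.

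For rigidity, the forward direction is routine: if $\phi \colon A \ra B$ is a \Cstar-isomorphism that is a morphism of CMQGs, then $(\phi \ot \id)(u_A) = g\, u_B\, g^{-1}$ for some $g \in \GL{n}{\CC}$, and this $g$ induces a monoidal unitary natural isomorphism realising an equivalence $\mathrm{Corep}(A,u) \simeq \mathrm{Corep}(B,v)$ over $\FdHilb$ that sends the isomorphism class of $u$ to that of $v$. Conversely, an equivalence over $\FdHilb$ matching $[u]$ with $[v]$ is precisely a compatible family of linear isomorphisms of the morphism spaces that is implemented, on the fibre, by conjugation with a fixed $g \in \GL{n}{\CC}$. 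Since by the reconstruction step each of $A$ and $B$ is the universal \Cstar-algebra determined by its morphism spaces, this identification of morphism spaces lifts to a \Cstar-isomorphism $A \ra B$ sending $u$ to $g^{-1} v g$, which is a morphism of CMQGs; hence $A$ and $B$ are similar. The genuinely delicate input throughout is the no-spurious-intertwiners analysis of the universal algebra, which is exactly what Woronowicz's original proof supplies and which I would not attempt to reprove from scratch here.
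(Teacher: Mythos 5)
Before assessing the mathematics, a point of calibration: the paper does not prove this statement at all. Theorem \ref{thm:tannaka-krein-duality} is imported as a preliminary from \cite{woronowicz88}, so there is no in-paper argument to compare yours against; the only question is whether your proposal stands on its own. Its skeleton does follow the standard route of Woronowicz's proof: the universal \Cstar-algebra on the entries of $u$ subject to $T u^{\ot k} = u^{\ot l} T$, with the conjugate (duality) morphisms of $\catC$ forcing unitarity of $u$ and $\overline{u}$ (hence boundedness of the generators, so the universal algebra exists) and self-adjointness in the orthogonal normalisation, and the comultiplication obtained from universality. But as a proof the proposal has a genuine gap, and you name it yourself: the assertion that the universal algebra has \emph{exactly} the prescribed intertwiner spaces --- no spurious intertwiners --- is the entire content of the theorem, and your treatment of it is a list of keywords (Haar state by averaging, semisimplicity, Frobenius reciprocity, positivity of traces) followed by an explicit deferral to Woronowicz. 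A proof that defers precisely the step constituting the theorem is a citation, not a proof; it puts you on the same footing as the paper, which cites, but it does not establish the statement.

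The converse half of your rigidity argument also contains a concrete flaw. You lift the categorical equivalence to a \Cstar-isomorphism by invoking that ``each of $A$ and $B$ is the universal \Cstar-algebra determined by its morphism spaces.'' The theorem, however, quantifies over arbitrary orthogonal CMQGs, which need not be in their maximal versions, and $\mathrm{Corep}(A,u)$ depends only on the Hopf $*$-algebra $\mathrm{Pol}(A)$: the maximal and reduced versions of a non-coamenable quantum group (for instance $O_n^+$ for $n \geq 3$) have literally the same corepresentation category but are not isomorphic as \Cstar-algebras, so your argument as written proves too much. What the equivalence over $\FdHilb$ actually gives is a family of identifications of the spaces $\Hom(u^{\ot k}, u^{\ot l})$ with $\Hom(v^{\ot k}, v^{\ot l})$, compatible with tensor products and composition and implemented on the fibre by a fixed $g \in \GL{n}{\CC}$; since matrix coefficients of the $u^{\ot k}$ span $\mathrm{Pol}(A)$, this induces a Hopf $*$-algebra isomorphism $\mathrm{Pol}(A) \ra \mathrm{Pol}(B)$ carrying $u$ to $g^{-1} v g$. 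Similarity in the sense of the statement is then obtained only after passing to maximal versions (or by reading similarity at the level of $\mathrm{Pol}$, which is the convention implicit in Woronowicz's formulation); your proof should say this rather than appeal to universality of $A$ and $B$ themselves.
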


\subsection{Easy quantum groups}
\label{sec:easy-quantum-groups}

In order to describe corepresentation categories of compact quantum matrix groups combinatorially, Banica and Speicher introduced the notions of a \emph{category of partitions} and of \emph{easy quantum groups} \cite{banicaspeicher09}.  Alternatively, we speak about \emph{partition quantum groups}.  A \emph{partition} $p$ is given by $k$ upper points and $l$ lower points which may be connected by lines.  This way, the set of $k+l$ points is partitioned into several \emph{blocks}. We write a partition as a diagram in the following way:
\setlength{\unitlength}{0.5cm}
\begin{center}
  \begin{picture}(14,3)
    \put(0,0){$\cdot$}
    \put(1,0){$\cdot$}
    \put(2,0){$\cdot$}
    \put(3,0){$\cdot$}
    \put(4.2,0){$\dotsc$}
    \put(6,0){$\cdot$}
    \put(7,0){$\cdot$}

    \put(3,1.5){$p$}

    \put(0,3){$\cdot$}
    \put(1,3){$\cdot$}
    \put(2,3){$\cdot$}
    \put(3,3){$\cdot$}
    \put(4.2,3){$\dotsc$}
    \put(6,3){$\cdot$}
    \put(7,3){$\cdot$}
    
    \put(9,1.5){\begin{minipage}{3.5cm} $k$ upper points and \\ $l$ lower points \end{minipage}}
  \end{picture}
\end{center}

Two examples of such partitions are the following diagrams.
\setlength{\unitlength}{0.5cm}
\begin{center}
  \begin{picture}(6,3)
    \put(0,0){\line(0,1){2}}
    \put(1,0){\line(0,1){2}}
    \put(0,1){\line(1,0){1}}

    \put(4,0){\line(0,1){0.5}}
    \put(5,0){\line(0,1){1}}
    \put(4,2){\line(0,-1){1}}
    \put(5,2){\line(0,-1){0.5}}
    \put(4,1){\line(1,0){1}}
  \end{picture}
\end{center}
In the first example, all four points are connected and the partition consists only of one block.  In the second example, the left upper point and the right lower point are connected, whereas neither of the two remaining points is connected to any other point.

The set of partitions on $k$ upper and $l$ lower points is denoted by $\cP(k,l)$, and the set of all partitions is denoted by $\cP$.  We also write $\cP(k)$ for the set of all partitions which have only $k$ upper points and no lower points.  We will also use \emph{labelled partitions}, i.e. partitions whose points are labelled by natural numbers.  Vice versa, we can associate with an index $i = (i_1, \dotsc, i_k) \in \{1, \dotsc, n\}^k$ the partition $\ker(i) \in \cP(k)$ whose blocks are exactly $\{j \amid i_j = i_0\}$, where $i_0$ runs through $\{1, \dotsc, n\}$.

There are the natural operations \emph{tensor product} ($p\ot q$), \emph{composition} ($pq$), \emph{involution} ($p^*$) and \emph{rotation} on partitions (see \cite[Definition 1.8]{banicaspeicher09} or \cite[Definition 1.4]{weber13-classification}).  If $p \in \cP(k,l)$, $q \in \cP(k',l')$, then $p \ot q \in \cP(k+k', l+l')$ is the partition obtained by writing $p$ and $q$ next to each other.  If $p \in \cP(k,l)$, $q \in \cP(l,m)$, then $qp$ is the partition obtained by writing $p$ above $q$, connecting them along the $l$ intermediate points and deleting all closed strings which are not connected to any of the $k + m$ remaining points.  The involution $p^* \in \cP(l,k)$ of $p \in \cP(k,l)$ is obtained by turning $p$ upside down.  A basic rotation of $p \in \cP(k,l)$ is the partition in $\cP(k-1, l+1)$ or in $\cP(k+1, l-1)$ arising after one turns the first leg of the upper row and puts it in front of the first leg of the lower row -- or vice versa.  Now, a rotation of $p$ is a partition which arises after applying multiple basic rotations to $p$.

A collection $\cC$ of subsets $\cC(k,l) \subset \cP(k,l)$, $k,l \in \NN$ is called a \emph{category of partitions} if it is closed under these operations and if it contains the pair partition $\sqcap$ (see \cite[Definition 6.1]{banicaspeicher09} or \cite[Definition 1.4]{weber13-classification}).

Given a partition $p \in \cP(k,l)$ and two multi-indexes $(i_1, \dotsc, i_k)$, $(j_1, \dotsc, j_l)$, we can label the diagram of $p$ with these numbers.  The upper and the lower row both are labelled from left to right and we define
\begin{equation*}
  \delta_p(i,j)
  =
  \begin{cases}
    1 & \text{if } p \text{ connects only equal indexes,} \\
    0 & \text{if there is a block of } p \text{ connecting unequal indexes} \eqstop
  \end{cases}
\end{equation*}
For every $n \in \NN$, one associates a map $T_p: (\CC^n)^{\ot k} \ra (\CC^n)^{\ot l}$ with $p$ which is defined by
\begin{equation*}
  T_p(e_{i_1} \ot \dotsm \ot e_{i_k})
  =
  \sum_{j_1, \dotsc, j_l \in \{1, \dotsc, n\}} \delta_p(i, j) \cdot e_{j_1} \ot \dotsm \ot e_{j_l}
  \eqstop
\end{equation*}

\begin{definition}[Definition 6.1 of \cite{banicaspeicher09} or Definition 2.1 of \cite{banicacurranspeicher09_2}]
  An orthogonal compact matrix quantum group $(A,u)$ is called \emph{easy}, if there is a category of partitions $\cC$ given by sets $\cC(k,l) \subset \cP(k,l)$, for all $k,l \in \NN$ such that
  \begin{equation*}
    \Hom(u^{\ot k}, u^{\ot l}) = \lspan \{T_p \amid p \in \cC(k,l) \}
    \eqstop
  \end{equation*}
\end{definition}  

We can apply Theorem \ref{thm:tannaka-krein-duality}, in order to obtain the following one-to-one correspondence between categories of partitions and easy quantum groups.  It is the basis of combinatorial investigations of easy quantum groups.
\begin{theorem}[See \cite{banicaspeicher09}]
\label{thm:classification-easy-quantum-groups-by-partitions}
  There is a bijection between
  \begin{itemize}
  \item categories of partitions $\cC$ and
  \item families of easy quantum groups $G_\cC(n)$, $n \in \NN$, up to similarity.
  \end{itemize}
\end{theorem}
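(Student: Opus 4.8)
The plan is to use Woronowicz's Tannaka--Krein duality (Theorem \ref{thm:tannaka-krein-duality}) as the bridge between the combinatorial data of a category of partitions and the analytic data of an orthogonal compact matrix quantum group. The whole argument rests on the fact that the assignment $p \mapsto T_p$ is compatible with the categorical operations, so that the linear spans $\lspan\{T_p\}$ inherit the structure of a concrete tensor \Cstar-category.

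First I would establish the functoriality relations linking the operations on partitions to the operations on the associated maps: for $p \in \cP(k,l)$, $q \in \cP(k',l')$ one has $T_{p \ot q} = T_p \ot T_q$; for $p \in \cP(k,l)$, $q \in \cP(l,m)$ one has $T_q T_p = n^{c(p,q)} T_{qp}$, where $c(p,q)$ is the number of closed loops deleted in forming the composite partition $qp$; and $T_{p^*} = (T_p)^*$. These are direct computations from the definition of $\delta_p$, the only subtlety being the scalar $n^{c(p,q)}$, which arises from the free summation over the indices carried by the deleted loops. I would also record that basic rotations of partitions correspond to the duality (Frobenius) morphisms, and that $T_\sqcap$ together with $T_{\sqcap^*}$ solve the conjugate equations that exhibit $\CC^n$ as self-dual, so that $u$ will be equivalent to $\overline u$.

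Next, fix a category of partitions $\cC$ and an integer $n$. Using the relations above together with closure of $\cC$ under tensor product, composition and involution, I would check that the collection of spaces
\begin{equation*}
  \cC_n(k,l) := \lspan\{ T_p \mid p \in \cC(k,l)\}
\end{equation*}
is closed under composition of morphisms, tensor products and adjoints, contains the identity morphisms, and (because $\sqcap \in \cC$) contains the duality morphisms. Hence $\{\cC_n(k,l)\}_{k,l}$ is a concrete compact tensor \Cstar-category whose objects are the tensor powers of the single object $\CC^n$. Theorem \ref{thm:tannaka-krein-duality} then produces an orthogonal compact matrix quantum group $(A,u)$, unique up to similarity, with $\Hom(u^{\ot k}, u^{\ot l}) = \cC_n(k,l)$; by definition this is $G_\cC(n)$. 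This gives well-definedness of $\cC \mapsto (G_\cC(n))_n$, and surjectivity onto families of easy quantum groups is immediate, since an easy quantum group is by definition one whose $\Hom$-spaces are spanned by such $T_p$.

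Finally, injectivity is where the main difficulty lies. Suppose $\cC \neq \cC'$, say $p \in \cC(k,l) \setminus \cC'(k,l)$. Via Theorem \ref{thm:tannaka-krein-duality}, to separate $G_\cC(n)$ from $G_{\cC'}(n)$ it suffices to show $\cC_n(k,l) \neq \cC'_n(k,l)$ for some $n$. The crux of the whole argument is the purely combinatorial fact that for $n$ large enough (it suffices to take $n \ge k+l$) the family $\{T_p \mid p \in \cP(k,l)\}$ is linearly independent. Granting this, for such $n$ one has $T_p \in \cC_n(k,l)$ if and only if $p \in \cC(k,l)$: the ``if'' is clear, and the ``only if'' follows because otherwise $\{T_p\} \cup \{T_q \mid q \in \cC(k,l)\}$ would be a dependent subfamily of the independent set $\{T_r \mid r \in \cP(k,l)\}$. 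Thus each $\cC(k,l)$ is recovered from the span as the set of partitions whose map lies in it, so distinct categories yield distinct families of spans and hence non-similar families of quantum groups. I expect the linear independence statement to be the technical heart of the proof; I would prove it by associating to each partition $p$ a pair of multi-indices $(i,j)$ adapted to the block structure of $p$ that witnesses $\delta_p(i,j) = 1$ while $\delta_q(i,j)=0$ for all $q \neq p$, which is possible precisely once $n$ is at least the total number of points.
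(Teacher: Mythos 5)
The paper itself offers no proof of this theorem: it is imported verbatim from Banica--Speicher \cite{banicaspeicher09}. Your overall architecture --- check that $p \mapsto T_p$ intertwines the operations on partitions with tensor product, composition (up to the loop factor $n^{c(p,q)}$) and adjoint of linear maps, conclude that $\lspan\{T_p \amid p \in \cC(k,l)\}$ is a concrete tensor \Cstar-category containing the duality morphisms, invoke Theorem \ref{thm:tannaka-krein-duality} to reconstruct $G_\cC(n)$, and reduce injectivity to linear independence of $\{T_p \amid p \in \cP(k,l)\}$ for $n \geq k+l$ --- is exactly the standard route, and the bound $n \geq k+l$ is the correct one. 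One fine point you pass over silently: by Theorem \ref{thm:tannaka-krein-duality}, similar quantum groups have corepresentation categories that are equivalent over $\FdHilb$, i.e.\ their Hom-spaces are \emph{conjugate} by tensor powers of an invertible matrix rather than equal, so ``distinct families of spans'' does not immediately give ``non-similar families''; this requires a further (short) argument.

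The genuine gap is in your proof sketch of the linear independence lemma, which you yourself identify as the technical heart. You claim that for each $p$ one can choose a labelling $(i,j)$ adapted to the blocks of $p$ with $\delta_p(i,j)=1$ while $\delta_q(i,j)=0$ for \emph{every} $q \neq p$. Such a witness never exists unless $p$ is the partition into singletons: if $q$ refines $p$ (every block of $q$ is contained in a block of $p$), then any labelling that is constant on the blocks of $p$ is automatically constant on the blocks of $q$, so $\delta_q(i,j)=1$ as well; for instance the labelling $(i,i)$ adapted to $\sqcap$ also satisfies $\delta_q = 1$ for the two-singletons partition $q$. The evaluation argument must instead be run triangularly: suppose $\sum_p \lambda_p T_p = 0$; for each $r \in \cP(k,l)$ choose the labelling $(i_r,j_r)$ assigning distinct indices to distinct blocks of $r$ (possible since $n \geq k+l \geq \#\text{blocks}$); then $\delta_p(i_r,j_r)=1$ if and only if $p$ refines $r$, so evaluation gives $\sum_{p \leq r} \lambda_p = 0$ for every $r$, and M\"obius inversion on the partition lattice (equivalently, unitriangularity of the evaluation matrix with respect to any linear extension of the refinement order) forces all $\lambda_p = 0$. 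With this replacement --- or with a Gram determinant computation --- your proof goes through.
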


\subsubsection{Homogeneous quantum groups}
\label{sec:homogeneou-quantum-groups}

The permutation groups $\rS_n$ arise as easy quantum groups associated with the category of all partitions.  In the framework of compact matrix quantum groups they can be presented as
\begin{equation*}
  \cont(\rS_n) \cong \Cstar(p_{ij}, 1 \leq i,j \leq n \amid p \text{ is unitary, } p_{ij} \text{ are commuting projections})
  \eqcomma
\end{equation*}
where $p = (p_{ij})$ is the fundamental corepresentation.  

A compact matrix quantum group $(A, u)$ is called \emph{homogeneous} if there is a morphism of compact matrix quantum groups $A \ra \cont(\rS_n)$.  Put differently, $\rS_n$ is a quantum subgroup of the compact quantum group described by $A$.

Note that all easy quantum groups are homogeneous, since every category of partitions is contained in the category of all partitions.

\subsubsection{Hyperoctahedral quantum groups}
\label{sec:hyperoctahedral-easy-quantum-groups}

Properties of an easy quantum group are -- in principle -- completely described by their category of partitions.  Let us recall some elementary instances of this fact.

A category of partitions $\cC$ is called \emph{hyperoctahedral} if the four block $\vierpart$ is in $\cC$, but the double singleton $\singleton \ot \singleton$ is not in $\cC$.  

\begin{proposition}
  \label{prop:description-hyperoctahedral-quantum-groups}
  Let $(A,u)$ be an easy quantum group whose category of partitions is $\cC$.
  \begin{enumerate}
  \item The entries $u_{ij}$ of $u$ are partial isometries if and only if $\vierpart \in \cC$.
  \item The elements $u_{ij}^2$ are central projections in $A$ if and only if $\primarypart \in \cC$.
  \item The corepresentation matrix $u$ is irreducible if and only if $\doublesingleton \notin \cC$.
  \end{enumerate}
\end{proposition}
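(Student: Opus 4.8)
The plan is to use the defining property of easiness, $\Hom(u^{\otimes k}, u^{\otimes l}) = \lspan\{T_p \mid p \in \cC(k,l)\}$, to convert each algebraic property into the presence of a single intertwiner, and then to read off membership of the relevant partition. The backbone of all three parts is the elementary dictionary: for a fixed $p \in \cP(k,l)$, the condition $T_p \in \Hom(u^{\otimes k}, u^{\otimes l})$, i.e. $T_p u^{\otimes k} = u^{\otimes l} T_p$, unpacks into an explicit relation among the entries $u_{ij}$ by comparing matrix entries of both sides. Since the maps $T_q$, $q \in \cP(k,l)$, are linearly independent once $n$ is large enough (at least the number of points involved), the relation $T_p \in \lspan\{T_q \mid q \in \cC(k,l)\}$ forces $p \in \cC(k,l)$; this is the device by which each ``only if'' direction concludes $p \in \cC$.

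For part (i) I would first compute, for the four-block viewed in $\cP(3,1)$, that $T_{\vierpart} \in \Hom(u^{\otimes 3}, u)$ is equivalent to the relation that $u_{ab_1}u_{ab_2}u_{ab_3}$ vanishes unless $b_1=b_2=b_3$, in which case it equals $u_{ab_1}$; rotating to $\cP(2,2)$ gives the equivalent pair $u_{ab}u_{ab'} = 0$ and $u_{ab}u_{a'b} = 0$ for $b \neq b'$, $a \neq a'$. The ``if'' direction is then immediate: the diagonal case yields $u_{ij}^3 = u_{ij}$, so each self-adjoint $u_{ij}$ is a partial isometry. The ``only if'' direction is the delicate one, and I expect it to be the main obstacle: from $u_{ij}^3 = u_{ij}$ alone the off-diagonal vanishing is not formal. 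Here I would pass to the von Neumann algebraic version $\Linfty(G)$ with its faithful trace $\phi$. Writing $p_{ik} = u_{ik}^2$, orthogonality gives $\sum_k p_{ik} = 1$, a sum of projections equal to $1$ in a finite von Neumann algebra, which forces the $p_{ik}$ to be pairwise orthogonal: expand $\phi\big((\sum_k p_{ik})^2\big) = 1$, compare with $\sum_k \phi(p_{ik}) = 1$, and use positivity and faithfulness of $\phi$ termwise to get $p_{ik}p_{il} = 0$. Then $\phi\big((u_{ik}u_{il})^*(u_{ik}u_{il})\big) = \phi(p_{ik}p_{il}) = 0$ together with faithfulness yields $u_{ik}u_{il} = 0$, and symmetrically for columns. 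Thus the full four-block relation holds, $T_{\vierpart}$ is an intertwiner, and linear independence gives $\vierpart \in \cC$.

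Part (ii) follows the same template with the partition $\primarypart$. I would unpack $T_{\primarypart} \in \Hom$ into the relation stating that each $u_{ij}^2$ commutes with every $u_{kl}$, i.e. that $u_{ij}^2$ is central; one first notes that $\primarypart$ generates $\vierpart$ under the categorical operations, so by part (i) the $u_{ij}^2$ are already projections. The ``if'' direction is the direct reading of the intertwiner relation. For ``only if'', centrality of the $u_{ij}^2$ makes $T_{\primarypart} u^{\otimes k} = u^{\otimes k}T_{\primarypart}$ hold --- invoking, exactly as in part (i), the faithful trace to promote the bare centrality and projection data to the full family of scalar relations encoded by $\primarypart$ --- whence $T_{\primarypart} \in \Hom$ and linear independence gives $\primarypart \in \cC$.

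Part (iii) is cleanest and I would treat it separately via $\Hom(u,u)$. The set $\cP(1,1)$ has exactly two elements: the identity partition, with $T = \id$, and the fully disconnected partition (two singletons), whose map is the all-ones matrix $J$, linearly independent from $\id$ for $n \geq 2$. Since the disconnected partition is a rotation of $\doublesingleton$ and $\cC$ is rotation-closed, $\doublesingleton \in \cC$ if and only if $J \in \Hom(u,u)$. If $\doublesingleton \in \cC$ then $\dim \Hom(u,u) \geq 2$ and $u$ is reducible, the vector $\sum_j e_j$ spanning an invariant line; if $\doublesingleton \notin \cC$ then $\cC(1,1) = \{\,|\,\}$ and $\Hom(u,u) = \CC\,\id$, so $u$ is irreducible. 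This is exactly the asserted equivalence. Across the proposition the only nonroutine point is the trace argument in the ``only if'' directions of (i) and (ii); the remaining steps are the mechanical intertwiner computation and the standard linear independence of the $T_p$.
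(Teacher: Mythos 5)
Your proof is correct, and it follows essentially the same route as the paper's: both rest on the easiness identity $\Hom(u^{\ot k},u^{\ot l}) = \lspan\{T_p \amid p \in \cC(k,l)\}$, unpack the intertwiner condition of the specific partition by comparing coefficients, and, for the ``only if'' directions, use linear independence of the maps $T_p$ (for $n$ large) to convert $T_p \in \Hom$ into $p \in \cC$. What you add is exactly what the paper compresses into the phrase ``comparing coefficients'': the recognition that the off-diagonal vanishing $u_{ik}u_{il} = 0 = u_{ki}u_{li}$ is not a formal consequence of $u_{ij}^3 = u_{ij}$, together with an argument producing it, and likewise the promotion of ``central projections'' to the full relation encoded by $\primarypart$. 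One simplification: your detour through $\Linfty(G)$ and its faithful trace is unnecessary, since in any unital \Cstar-algebra projections summing to $1$ are automatically pairwise orthogonal ($u_{ik}^2 \leq 1 - u_{il}^2$ gives $u_{ik}^2u_{il}^2 = 0$), whence $u_{ik}u_{il} = u_{ik}^3u_{il}^3 = u_{ik}\,(u_{ik}^2u_{il}^2)\,u_{il} = 0$ holds directly in $\mathrm{Pol}(A)$, avoiding any appeal to the embedding of $\mathrm{Pol}(G)$ into $\Linfty(G)$. Finally, in (iii) you work with $\Hom(u,u)$ and $\cP(1,1)$ where the paper works with $\Hom(1,u\ot u)$ and $\cP(2)$; by rotation these are the same argument, and both your proof and the paper's share the implicit standing assumption that $n$ is large enough for the finitely many $T_p$ involved to be linearly independent, without which the ``only if'' statements can fail (e.g.\ for $n=1$).
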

\begin{proof}
  By Tannaka-Krein duality, $A$ is the universal \Cstar-algebra generated by the entries $u_{ij}$ of its fundamental corepresentation which satisfy $T_p u^{\ot k} = u^{\ot l} T_p$ for all $p \in \cC(k,l)$.  Now item (i) follows by comparing coefficients of $T_p$ and $u^{\ot 4} T_p$ for $p = \vierpart$.  Similarly item (ii) follows by comparing coefficients of $T_p u^{\ot 3}$ and $ u^{\ot 3} T_p$ for $p = \primarypart$.

  In order to see (iii), note that $u$ is irreducible if and only if $\dim \Hom(1, u \ot u) = 1$.  Since $T_\sqcap \in \Hom(1, u \ot u)$ and $|\cP(2)| = 2$, it follows that $u$ is irreducible, if and only if $T_{\doublesingleton} \notin \Hom(1, u \ot u)$.  The latter is equivalent to $\doublesingleton \notin \cC$.
\end{proof}

\subsection{Semi-direct product quantum groups}
\label{sec:semi-direct-products}

Let $\Gamma = \langle g_1, \dotsc, g_n \rangle$ be a finitely generated group and assume that $\rS_n$ acts on $\Gamma$ by permuting $g_1, \dotsc, g_n$.  Then $\rS_n$ also acts on the maximal group \Cstar-algebra $\Cstar(\Gamma)$.  We describe a semi-direct product construction implementing this action.  It is a very simple special case of the bicrossed-product constructions described in \cite{majid90,majid91,klimykschmudgen97,vaesvainerman03}.

Recall that the semi-direct product of two groups $G$ and $H$, where $H$ acts by group automorphisms $(\alpha_h)_{h \in H}$ on $G$ is $G \times H$  as a set whose multiplication is given by $(g_1, h_1)(g_2,h_2) = (g_1 \alpha_{h_1}(g_2), h_1h_2)$.  This picture should be kept in mind in order to understand the semi-direct product construction which is described in the following proposition.

  Denote by $(p_{ij})$ the fundamental corepresentation of $\cont(\rS_n)$.  The CMQG described in the next proposition is called the semi-direct product of $\Cstar(\Gamma)$ and $\cont(\rS_n)$ and it is denoted by $\Cstar(\Gamma) \Join \cont(\rS_n)$.

\begin{proposition}
  \label{prop:semi-direct-products}
  The \Cstar-algebra $\Cstar(\Gamma) \ot \cont(\rS_n)$ is a CMQG with fundamental corepresentation $(u_{g_i}p_{ij})$.  Its comultiplication is given by $\Delta(u_{g_i}p_{ij}) = \sum_k u_{g_i} p_{ik} \ot u_{g_k}p_{kj}$ for all $i,j \in \{1, \dotsc, n\}$.
\end{proposition}
\begin{proof}
  We first show that $(u_{g_i} p_{ij})$ and $(u_{g_i}^* p_{ij})$ are unitaries in $\rM_n(\Cstar(\Gamma) \ot \cont(G))$.  We can use the relation $\sum_k p_{ki} = 1$ so as to obtain 
  \begin{equation*}
    \sum_k p_{ki} u_{g_k}^* u_{g_k} p_{kj}
    =
    \delta_{ij} \sum_k p_{ki}
    =
    \delta_{ij} 1
    \eqstop
  \end{equation*}
  So $(u_{g_i} p_{ij})$ is an isometry.  Similarly it follows that $(u_{g_i}^* p_{ij})$ is an isometry.  As $\Cstar(\Gamma) \ot \cont(\rS_n)$ has a faithful tracial state, this implies that both matrices are unitary.

  Since $\sum_j u_{g_i} p_{ij} = u_{g_i}$ and $(u_{g_i}p_{ij})^2 = p_{ij}$, it follows that the entries of $(u_{g_i}p_{ij})$ generate $\Cstar(\Gamma) \ot \cont(\rS_n)$.
  Finally let us show that there is a comultiplication on $\Cstar(\Gamma) \ot \cont(\rS_n)$ which admits $(u_{g_i}p_{ij})$ as a fundamental corepresentation.  The right $\rS_n$-action $\sigma(g_i) = g_{\sigma^{-1}(i)}$ gives rise to a right $\rS_n$-action on $\Cstar(\Gamma)$.  Denote by $\delta: \Cstar(\Gamma) \ra \cont(\rS_n) \ot \Cstar(\Gamma)$ the corresponding coaction satisfying $\delta(u_{g_i}) = \sum_k p_{ik} \ot u_{g_k}$.  Denote by $\Delta_{\rS_n}$ and $\Delta_\Gamma$ the comultiplication of $\cont(\rS_n)$ and $\Cstar(\Gamma)$, respectively.  Then $\Delta = ((\id \ot \delta \ot \id \ot \id) \circ (\Delta_\Gamma \ot \Delta_{\rS_n}))_{12324}$ from $\Cstar(\Gamma) \ot \cont(\rS_n)$ to $\Cstar(\Gamma) \ot \cont(\rS_n) \ot \Cstar(\Gamma) \ot \cont(\rS_n)$ satisfies
  \begin{align*}
    \Delta(u_{g_i}p_{ij})
    & =
    ((\id \ot \delta \ot \id \ot \id)(\sum_k u_{g_i} \ot u_{g_i} \ot p_{ik} \ot p_{kj}))_{12324} \\
    & =
    (\sum_{k,l} u_{g_i} \ot p_{il} \ot u_{g_l} \ot p_{ik} \ot p_{kj})_{12324}  \\
    & =
    \sum_k u_{g_i}p_{ik} \ot u_{g_k}p_{kj}
    \eqcomma
  \end{align*}
  for all $i,j \in \{1, \dotsc, n\}$.  This finishes the proof.
\end{proof}

\begin{example}
  Consider the permutation action of $\rS_n$ on the natural generators of $\ZZ^{\oplus n}$.  Note that $\Cstar(\ZZ^{\oplus n}) \cong \cont(\mathbb{T}^n)$ as compact quantum groups by Pontryagin duality.  We have $\Cstar(\ZZ^{\oplus n}) \Join \cont(\rS_n) \cong \cont(\mathbb{T}^n \rtimes \rS_n)$ as compact matrix quantum groups.  Similarly, we have $\Cstar(\ZZ_2^{\oplus n}) \Join \cont(\rS_n) \cong \cont(\ZZ_2^{\oplus n} \rtimes \rS_n)$.
\end{example}

\subsection{Diagonal subgroups}
\label{sec:diagonal-subgroups}

If $(A,u)$ is a CMQG whose fundamental corepresentation is a diagonal matrix, then its diagonal entries $u_{ii}$, $i \in \{1, \dotsc, n\}$ are unitary.  Let $\Gamma$ be the group generated by these diagonal entries.  Then $A$ is a \Cstar-algebra completion of the group algebra $\CC \Gamma$.  Using this fact, one can associate with any CMQG a canonical discrete group with a fixed set of generators.
\begin{definition}
  \label{def:diagonal-subgroup}
  Let $(A,u)$ be a CMQG.  Denote by $\pi:A \ra B$ the quotient of $A$ by the relations $u_{ij} = 0$ for all $i \neq j$.  Let $\Gamma$ be the group generated by the elements $g_i = \pi(u_{ii})$.  Then $\Gamma$ together with the generators $g_1, \dotsc, g_n$ is called the \emph{diagonal subgroup} of $(A,u)$.  We denote it by $\diag(A,u)$.
\end{definition}

We will use the following proposition, guaranteeing that a CMQG in its maximal version gives rise to the maximal group \Cstar-algebra of its diagonal subgroup.
\begin{proposition}
  \label{prop:diagonal-subgroups-of-maximal-versions}
  Let $(A,u)$ be a CMQG in its maximal version and let $\Gamma$ be the diagonal subgroup of $(A,u)$.  Then the \Cstar-algebra $A/ (u_{ij} = 0 \text{ for all } i \neq j) \cong \Cstar(\Gamma)$ is in its maximal version.
\end{proposition}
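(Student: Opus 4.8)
The plan is to produce two mutually inverse surjections between $\Cstar(\Gamma)$ and the quotient $B := A/(u_{ij} = 0 \text{ for } i \neq j)$, each carrying canonical generators to canonical generators. Throughout write $\pi \colon A \to B$ for the quotient map and $g_i = \pi(u_{ii})$, so that $\Gamma = \diag(A,u)$ is the group generated by the $g_i$ inside $B$.

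First I would equip $B$ with a CMQG structure having diagonal fundamental corepresentation. Since $(\pi \otimes \pi)\Delta(u_{ij}) = \sum_k \pi(u_{ik}) \ot \pi(u_{kj}) = \delta_{ij}\, g_i \ot g_i$, the comultiplication $\Delta$ of $A$ maps every off-diagonal generator into $\ker(\pi \otimes \pi)$, so it descends to a comultiplication $\Delta_B$ on $B$ with $\Delta_B(g_i) = g_i \ot g_i$. The image $U = (\pi(u_{ij})) = \diag(g_1, \dotsc, g_n)$ is a diagonal unitary, so $(B,U)$ is a CMQG and in particular each $g_i$ is a unitary in $B$.

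The heart of the argument, and the step I expect to be the main obstacle, is to show $\Pol(B) \cong \CC\Gamma$, i.e. that the elements of $\Gamma$ satisfy no linear relations in $B$ beyond the group relations; a priori $B$ could be a proper quotient of $\Cstar(\Gamma)$ carrying extra linear identities among the $g_i$. I would resolve this using the Hopf *-algebra structure of $\Pol(B)$: with respect to $\Delta_B$ and the counit $\epsilon_B$ of the CMQG $(B,U)$ (which satisfies $\epsilon_B(g_i) = 1$), each $g_i$ is a group-like element, hence so is every element of $\Gamma$. Since distinct group-like elements of a coalgebra are linearly independent, and distinct elements of $\Gamma$ are distinct unitaries in $B$, the group $\Gamma$ is linearly independent in $B$, so the canonical map $\CC\Gamma \to \Pol(B)$ is an isomorphism.

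With this in hand I would close the argument using the maximality of $A$. On one side, the inclusion $\Gamma \subseteq \mathcal U(B)$ is a unitary representation of the discrete group $\Gamma$, so the universal property of the full group \Cstar-algebra yields a surjection $\rho \colon \Cstar(\Gamma) \to B$ with $\rho([g_i]) = g_i$. On the other side, composing $\pi|_{\Pol(A)} \colon \Pol(A) \to \Pol(B) \cong \CC\Gamma$ with the canonical inclusion $\CC\Gamma \hookrightarrow \Cstar(\Gamma)$ gives a *-homomorphism $\Pol(A) \to \Cstar(\Gamma)$ sending $u_{ij} \mapsto \delta_{ij}[g_i]$; because $A$ is in its maximal version, this extends to a \Cstar-morphism $A \to \Cstar(\Gamma)$, which annihilates every off-diagonal $u_{ij}$ and hence factors through a map $\theta \colon B \to \Cstar(\Gamma)$. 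The composites $\theta \circ \rho$ and $\rho \circ \theta$ fix the respective generators and are therefore the identity, so $\rho$ is an isomorphism and $B \cong \Cstar(\Gamma)$, which is by definition in its maximal version.
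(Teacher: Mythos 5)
Your proof is correct, and its skeleton is the same as the paper's: maximality of $A$ is used to reduce everything to the level of the *-algebra $\Pol(A)$, and the crux is the identification of the diagonal *-algebra with the group algebra $\CC\Gamma$. The paper's own proof is only three lines: since $A$ is the universal enveloping \Cstar-algebra of $\Pol(A)$, the quotient $B$ is the universal enveloping \Cstar-algebra of $\text{*-alg}(\pi(u_{ii})) \cong \CC\Gamma$, whence $B \cong \Cstar(\Gamma)$. It thus invokes two facts without proof: that enveloping \Cstar-algebras are compatible with quotients by *-ideals, and, more importantly, that the elements of $\Gamma$ are linearly independent in $B$ --- the latter being folded into the unproven remark opening Section \ref{sec:diagonal-subgroups}, namely that a CMQG with diagonal fundamental corepresentation is a \Cstar-completion of $\CC\Gamma$. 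You do two things differently: you actually prove the linear independence, by checking that the comultiplication descends to $B$ and that the elements of $\Gamma$ are group-like in $\Pol(B)$, so that distinct ones are linearly independent; and you replace the abstract quotient-of-enveloping-algebra fact by an explicit pair of mutually inverse morphisms ($\rho$ from the universal property of $\Cstar(\Gamma)$, $\theta$ from maximality of $A$). The paper's route buys brevity; yours buys a self-contained argument that isolates exactly where the content lies --- your identification $\Pol(B) \cong \CC\Gamma$ is precisely the claim the paper leaves implicit, and without it $B$ could a priori be a proper quotient of $\Cstar(\Gamma)$ carrying extra linear relations among the group elements.
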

\begin{proof}
  Since $(A,u)$ is in its maximal version, it is the universal enveloping \Cstar-algebra of its $\text{*-subalgebra}$ $\text{*-}\mathrm{alg}(u_{ij} \amid i,j \in \{1, \dotsc, n\})$.  Denote by $\pi:A \ra B$ the quotient of $A$ by the relations $u_{ij} = 0$ for all $i \neq j$.  Then $B$ is the universal enveloping \Cstar-algebra of the *-algebra $\text{*-}\mathrm{alg}(\pi(u_{ii}) \amid i \{1, \dotsc, n\}) \cong \CC \Gamma$.  So $B \cong \Cstar(\Gamma)$.
\end{proof}

\subsection{Operator algebraic properties of quantum groups}
\label{sec:operator-algebraic-properties}

In Section \ref{sec:hyperoctahedral-series} we will describe certain operator algebraic properties of easy quantum groups.  Let us briefly describe von Neumann algebras and some of their properties.  We refer the reader to the book \cite{brownozawa08} for more details on approximation properties for operator algebras and to \cite{dixmier57} for an introduction to von Neumann algebras.

\subsubsection{Von Neumann algebras}
\label{sec:von-Neumann-algebras}

A separable von Neumann algebra is a strongly closed, unital *-subalgebra $M \subset \bo(H)$ for some (complex) separable Hilbert space $H$.  All von Neumann algebras in this article are separable.  We say that $M$ is finite, if there is a faithful normal tracial state on $M$, i.e. a $\sigma$-strongly continuous functional $\tau: M \ra \CC$ such that
\begin{itemize}
\item $\tau(x^*x) \geq 0$ for all $x \in M$,
\item if $\tau(x^*x) = 0$ then $x = 0$ and
\item $\tau(xy) = \tau(yx)$ for all $x,y \in M$.
\end{itemize}

If $M$ has a normal faithful tracial state $\tau$, then $(x,y) \mapsto \tau(y^*x)$ defines an inner product on $M$.  The Hilbert space completion of $M$ with respect to this inner product is denoted by $\Ltwo(M, \tau)$.  The action of $M$ on itself by left multiplication can be extended to a representation on $\Ltwo(M, \tau)$ called the \emph{GNS-representation} of $M$ associated with $\tau$.  We say that an inclusion of finite von Neumann algebras $N \subset M$ with faithful normal tracial state $\tau$ has \emph{finite index} if the commutant $N' = {\{x \in \bo(\Ltwo(M, \tau)) \amid xy = yx \text{ for all } y \in N\}}$ is a finite von Neumann algebra.  If $M$ is finitely generated as a left $N$ module, then $N \subset M$ has finite index for all traces on $M$.

\subsubsection{Amenability}
\label{sec:amenability}

The notion of amenability for discrete groups goes back to the work of von Neumann in \cite{vonneumann29}.  All abelian groups are amenable, while the basic example of a non-amenable group is a free group $\freegrp{n}$.

  Based on his fundamental work in \cite{connes76}, Connes introduced the notion of amenability for von Neumann algebras in \cite{connes78}.  We only need the following three facts about amenability, which follow from Connes's work \cite{connes76}.
  \begin{itemize}
  \item Every von Neumann subalgebra of an amenable von Neumann algebra is amenable. 
  \item The group von Neumann algebra of a discrete group $G$ is amenable, if and only if $G$ is amenable.
  \item Every finite index extension of an amenable von Neumann algebra is amenable.
  \end{itemize}

  In \cite{ruan96,tomatsu06}, Ruan and Tomatsu describe amenability of quantum groups.  We need the following very special case of their work, which already appeared in the article of Ruan.
\begin{theorem}[{See \cite[Theorem 4.5]{ruan96}}]
  \label{thm:amenability-for-quantum-groups}
  Let $G$ be a compact quantum subgroup of $O_n^+$.  Then the discrete dual $\wh G$ is an amenable quantum group, if and only $\Linfty(G)$ is an amenable von Neumann algebra.
\end{theorem}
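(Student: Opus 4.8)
The plan is to prove the two implications separately, exploiting that every quantum subgroup of $O_n^+$ is orthogonal and hence of Kac type: by Section~\ref{sec:compact-matrix-quantum-groups} its Haar state is tracial, so $\Linfty(G)$ carries a faithful normal trace $\tau$ and the dual $\wh G$ is unimodular. Throughout I would realise both $\Linfty(G)$ and $\linfty(\wh G)$ as acting on $\Ltwo(G)$ (via the GNS construction and the multiplicative unitary, using the Peter--Weyl decomposition $\Ltwo(G) \cong \bigoplus_\alpha H_\alpha \ot \overline{H_\alpha}$), so that the whole argument takes place inside $\bo(\Ltwo(G))$ and amounts to relating invariant means with Connes' hypertraces.

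For the implication $\wh G$ amenable $\Rightarrow \Linfty(G)$ amenable I would use the F\o lner-type characterisation of amenability of a discrete quantum group: there is a net of unit vectors $\xi_i \in \Ltwo(G)$ that is asymptotically invariant under the translation structure of $\wh G$. Setting $\Phi_i(x) = \langle x\xi_i, \xi_i\rangle$ and passing to a weak-$*$ limit point $\Phi$ on $\bo(\Ltwo(G))$, asymptotic invariance forces $\Phi$ to restrict to $\tau$ on $\Linfty(G)$ and to be $\Linfty(G)$-central. Such a hypertrace is exactly Connes' criterion for $\Linfty(G)$ to be injective, hence amenable. This direction is essentially formal once the dictionary between almost-invariant vectors and hypertraces is set up.

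The substantial direction is $\Linfty(G)$ amenable $\Rightarrow \wh G$ amenable. Here I would start from the hypertrace $\Phi$ supplied by injectivity and transport it to an invariant mean on $\linfty(\wh G)$ by restriction, $m = \Phi|_{\linfty(\wh G)}$. The point is to verify left invariance: the translation action of $\wh G$ on $\linfty(\wh G)$ is spatially implemented inside $\bo(\Ltwo(G))$ by conjugation with unitaries drawn from $\Linfty(G)$, just as left translation on $\linfty(G)$ is implemented by the left regular representation classically. Centrality of $\Phi$ over $\Linfty(G)$ then turns into invariance of $m$, provided the conjugation does not introduce a modular correction --- and this is precisely where unimodularity of $\wh G$, guaranteed by $G \subset O_n^+$, is indispensable.

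The main obstacle I anticipate is this last transfer step: checking that $\Linfty(G)$-centrality of the hypertrace passes to genuine left invariance of the restricted mean. This is the non-trivial half of the classical equivalence ``$\grpvn(G)$ injective iff $G$ amenable'', and its quantum analogue requires careful bookkeeping of how the coproduct and the modular automorphism group interact on the fibres $\bo(H_\alpha)$ of $\linfty(\wh G)$; the hypothesis $G \subset O_n^+$ removes the modular obstruction and lets the implication go through.
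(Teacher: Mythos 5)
The paper does not prove this statement at all: it is imported verbatim from Ruan (\cite[Theorem 4.5]{ruan96}), so your proposal has to be judged as a self-contained proof of that theorem in the Kac-type case. Your skeleton is the right one, and matches the architecture of the known proofs: orthogonality of $G \subset O_n^+$ gives a tracial Haar state, hence a Kac-type (unimodular) situation; amenability of $\Linfty(G)$ is translated into Connes' hypertrace criterion; and $\Linfty(G)$ and $\linfty(\wh G)$ are related inside $\bo(\Ltwo(G))$ through the multiplicative unitary.

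The genuine gap sits exactly at the point you yourself call the main obstacle, and the mechanism you propose there does not exist. In the quantum setting the translation structure of $\wh G$ on $\linfty(\wh G)$ is \emph{not} implemented by conjugation with unitaries drawn from $\Linfty(G)$: there are no group-like unitaries to conjugate with. What implements the dual coproduct is the multiplicative unitary $W \in \Linfty(G) \vnt \linfty(\wh G)$, whose legs mix the two algebras, and left invariance of the candidate mean $m = \Phi|_{\linfty(\wh G)}$ is the algebra-valued identity $(\id \ot m)\wh\Delta(x) = m(x)1$, not a family of scalar identities indexed by group elements. Writing schematically $W = \sum_i a_i \ot b_i$ with $a_i \in \Linfty(G)$ and $b_i \in \linfty(\wh G)$, invariance amounts to $(\Phi \ot \id)\bigl(W(x \ot 1)W^*\bigr) = \Phi(x)1$; centrality of $\Phi$ over $\Linfty(G)$ turns the left-hand side into $\sum_{i,j} \Phi(x a_j^* a_i)\, b_i b_j^*$, whereas contracting against $W^*W = 1$ would require the second legs in the opposite order $b_j^* b_i$. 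Undoing this order reversal is precisely where the unitary antipode and the Kac hypothesis enter (for instance via the identity $(R \ot \wh R)(W) = W^*$), and carrying that out is the actual substance of Ruan's proof, not bookkeeping that can be deferred; indeed, for non-Kac duals the implication ``$\Linfty(G)$ injective $\Rightarrow \wh G$ amenable'' is not known to hold in general, so no argument as formal as the one you sketch can suffice. Your other direction has a gap of the same nature, and moreover a circular one: the F\o lner-type/almost-invariant-vector characterisation of amenability of $\wh G$ that you take as a starting point is not the definition (which is the existence of an invariant mean on $\linfty(\wh G)$); in the Kac case its equivalence with amenability is essentially the theorem you are trying to prove, so invoking it is no more economical than citing Ruan's result outright.
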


An important notion in von Neumann algebra theory is \emph{strong solidity} introduced in \cite{ozawapopa10-cartan1}.  A von Neumann algebra $M$ is called diffuse, if there are no minimal projections in $M$.  We call $M$ strongly solid if for all amenable, diffuse von Neumann subalgebras $A \subset M$, the normaliser $\cN_M(A)'' = {\mathrm{vN}(u \in \cU(M) \amid uAu^* = A)}$ is also amenable.

\subsubsection{Haagerup property}
\label{sec:haagerup-property}

The Haagerup property for groups goes back to \cite{haagerup79}, where Haagerup proved that free groups have the Haagerup property.  More generally, this property is preserved under free products.
\begin{theorem}[{See \cite{cherixcowlingjolissaintjulgvalette01}}]
  \label{thm:free-products-preserve-H}
  Let $G_1, G_2$ be groups with the Haagerup property.  Then also $G_1 * G_2$ has the Haagerup property.
\end{theorem}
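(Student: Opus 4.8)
The plan is to use the characterization of the Haagerup property by means of conditionally negative definite functions: a discrete group $G$ has the Haagerup property if and only if it admits a proper conditionally negative definite function $\psi: G \to \RR_{\geq 0}$, equivalently a proper affine isometric action on a real Hilbert space. Accordingly, I would start from proper conditionally negative definite functions $\psi_1$ on $G_1$ and $\psi_2$ on $G_2$, written as $\psi_i(g) = \|b_i(g)\|^2$ for cocycles $b_i: G_i \to H_i$ associated with orthogonal representations $\pi_i: G_i \to O(H_i)$, and build from these data a proper conditionally negative definite function on $\Gamma = G_1 * G_2$.

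The key geometric tool is the Bass--Serre tree $T$ of the free product, on which $\Gamma$ acts with vertex set the disjoint union of the coset spaces $\Gamma / G_1$ and $\Gamma / G_2$ and with trivial edge stabilisers. Fixing a base vertex $v_0$, the tree-length function $\psi_T(g) = d_T(v_0, g v_0)$ is conditionally negative definite, since the graph metric of a tree is a wall metric (each edge separates $T$ into two halves) and hence embeds via its square root into a Hilbert space. Writing each nontrivial $g \in \Gamma$ as a reduced word $g = s_1 \cdots s_k$ with syllables $s_j$ alternately in $G_1 \setminus \{e\}$ and $G_2 \setminus \{e\}$, I would then combine $\psi_T$ with the block-additive function $\Psi(g) = \sum_{j=1}^k \psi_{\epsilon_j}(s_j)$, where $\epsilon_j \in \{1,2\}$ records the factor containing $s_j$, and take $\psi = \psi_T + \Psi$.

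The central point, and the main obstacle, is to verify that $\Psi$ is conditionally negative definite; this is where the tree structure does the real work. I would produce an explicit affine isometric action realising $\Psi$ on a Hilbert space $\mathcal H = \bigoplus_{e} H_{\epsilon(e)}$ indexed by the oriented edges of $T$, where the cocycle $B(g)$ deposits the vector $b_{\epsilon_j}(s_j)$ into the coordinate corresponding to the $j$-th edge along the geodesic from $v_0$ to $gv_0$. Because $T$ is a tree, these geodesics do not backtrack, so the contributions of distinct syllables land in orthogonal coordinates; this yields $\|B(g)\|^2 = \Psi(g)$ simultaneously with the cocycle identity $B(gh) = B(g) + \pi(g) B(h)$ for the associated permutation-of-edges representation $\pi$. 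Summing conditionally negative definite functions keeps them conditionally negative definite, so $\psi = \psi_T + \Psi$ is conditionally negative definite. Finally I would check properness: if $\psi(g) \leq C$, then $\psi_T(g) \leq C$ bounds the syllable length $k$, while $\Psi(g) \leq C$ forces each syllable $s_j$ into the finite set $\{ s \in G_{\epsilon_j} : \psi_{\epsilon_j}(s) \leq C \}$ by properness of $\psi_1, \psi_2$; a bounded number of syllables with finitely many choices each leaves only finitely many such $g$. Thus $\psi$ is a proper conditionally negative definite function on $\Gamma$, and $\Gamma$ has the Haagerup property. If one prefers the formulation via a net of $c_0$ positive definite functions approximating $1$, Schoenberg's theorem provides them as $e^{-t\psi}$, which are positive definite for every $t > 0$, lie in $c_0(\Gamma)$ by properness of $\psi$, and converge pointwise to $1$ as $t \to 0$.
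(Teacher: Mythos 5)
The paper itself contains no proof of this statement: it is imported verbatim from the book of Cherix--Cowling--Jolissaint--Julg--Valette, so your attempt can only be measured against the standard arguments from that literature. Your skeleton is exactly the standard one, and most of it is sound: the reduction to producing a proper conditionally negative definite function (legitimate for the countable discrete groups at stake here), the conditional negative definiteness of the tree length $\psi_T$ via the wall structure of the Bass--Serre tree, the decision to take $\psi = \psi_T + \Psi$ rather than $\Psi$ alone, and the final properness argument. The tree summand is indeed needed in general: if each $\psi_i$ vanishes at some nontrivial $s \in G_1$, $t \in G_2$, then $\Psi\bigl((st)^m\bigr) = 0$ for all $m$, so $\Psi$ by itself need not be proper.

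The gap sits in the step you yourself single out as central: the explicit cocycle realizing $\Psi$. As stated --- deposits $b_{\epsilon_j}(s_j)$ in coordinates indexed by edges of the geodesic, with $\pi$ ``the associated permutation-of-edges representation'' --- the cocycle identity fails. Test it inside one factor: take $s, t \in G_1 \setminus \{e\}$ with $st \neq e$. Then $B(s)$ is $b_1(s)$ in the coordinate of the edge $s$, $\pi(s)B(t)$ is $b_1(t)$ in the coordinate of the edge $st$, while $B(st)$ is $b_1(st)$ in the coordinate of the edge $st$; since the edges $s$ and $st$ are distinct, $B(st) = B(s) + \pi(s)B(t)$ would force $b_1(s) = 0$. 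The obstruction is structural, not notational: the two identities $\|B(g)\|^2 = \Psi(g)$ and $B(st) = B(s) + \pi(s)B(t)$ together force $\langle B(s), \pi(s)B(t)\rangle = \langle b_1(s), \pi_1(s) b_1(t)\rangle$, whereas an untwisted permutation of coordinates can only produce $0$ or $\langle b_1(s), b_1(t)\rangle$ there; a similar mismatch occurs under cancellation, where the two deposits that ought to annihilate each other sit in different edge coordinates. Both failures are cured by the standard repair: index the summands by the vertices $\Gamma/G_1 \sqcup \Gamma/G_2$ rather than by edges (so that the syllables $s$ and $st$ contribute to the \emph{same} coordinate $sG_1 = stG_1$), give the type-$i$ vertices the fibre $H_i$, and let $\Gamma$ act by the induced representation $\mathrm{Ind}_{G_1}^{\Gamma}\pi_1 \oplus \mathrm{Ind}_{G_2}^{\Gamma}\pi_2$, i.e.\ the permutation of fibres twisted by $\pi_i$. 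Concretely, choosing for each coset $xG_i$ its representative $\sigma(xG_i)$ of minimal syllable length and setting $B_i(g)(xG_i) = b_i\bigl(\sigma(xG_i)^{-1}\, g\, \sigma(g^{-1}xG_i)\bigr)$, the cocycle identity follows in two lines from that of $b_i$, and the tree geometry enters exactly where you want it to: all but finitely many coordinates vanish (only the vertices between the base edge and its $g$-translate contribute), each surviving coordinate carries one syllable, and $\|B(g)\|^2 = \Psi(g)$. Alternatively you can bypass any construction: by Schoenberg it suffices that $e^{-t\Psi}$ be positive definite for every $t > 0$, and $e^{-t\Psi}$ is the multiplicative free-product extension of $e^{-t\psi_1}$ and $e^{-t\psi_2}$, which is positive definite by Boca's theorem on free products of positive definite functions. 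With either repair, the rest of your argument goes through verbatim.
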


  Based on a lecture by Connes the Haagerup property was defined in the setting of finite von Neumann algebra by Choda \cite{choda83}.  In particular, she proves that a group has the Haagerup property if and only if its group von Neumann algebra has this property.
  \begin{theorem}[{See \cite{choda83}}]
    \label{thm:H-is-operator-algebraic}
    Let $G$ be a discrete group.  Then $G$ has the Haagerup property if and only if $\rL(G)$ has the Haagerup property.    
  \end{theorem}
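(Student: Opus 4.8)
The plan is to unwind both notions into a common language and translate between positive-definite functions on $G$ and completely positive maps on $\rL(G)$, using the canonical identification $\Ltwo(\rL(G),\tau) \cong \ell^2(G)$ under which $\lambda_g \Omega \leftrightarrow \delta_g$ (here $\tau$ is the canonical trace, $\Omega$ its cyclic vector, and $\{\delta_g\}_{g \in G}$ the standard orthonormal basis). Recall that $G$ has the Haagerup property when there is a sequence of normalised positive-definite functions $\phi_n \in c_0(G)$ with $\phi_n \to 1$ pointwise, and that $\rL(G)$ has the Haagerup property when there is a sequence of normal, unital, trace-preserving completely positive maps $\Phi_n \colon \rL(G) \to \rL(G)$ whose induced operators on $\Ltwo(\rL(G),\tau)$ are compact and converge strongly to the identity. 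I would prove the two implications separately.

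For the implication from $G$ to $\rL(G)$, given $\phi_n$ I would set $\Phi_n = m_{\phi_n}$, the Fourier multiplier determined by $m_{\phi_n}(\lambda_g) = \phi_n(g)\lambda_g$. Positive-definiteness of $\phi_n$ makes the Schur multiplier on $\bo(\ell^2(G))$ with symbol $[\phi_n(s^{-1}t)]_{s,t}$ completely positive (Schur product theorem), and this map restricts to a normal completely positive map on $\rL(G)$; the normalisation $\phi_n(e) = 1$ makes it unital and trace-preserving. The operator induced on $\ell^2(G)$ is the diagonal operator $\delta_g \mapsto \phi_n(g)\delta_g$, which is compact precisely because $\phi_n \in c_0(G)$, and since $\|m_{\phi_n}\| \le 1$ and $\phi_n \to 1$ pointwise these diagonal operators converge strongly to the identity. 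Hence $\rL(G)$ has the Haagerup property.

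For the converse, given approximating maps $\Phi_n$ I would define $\phi_n(g) = \tau(\Phi_n(\lambda_g)\lambda_g^*) = \langle \Phi_n(\lambda_g)\Omega, \lambda_g\Omega\rangle$. The crucial point is that each $\phi_n$ is positive-definite: for $g_1, \dotsc, g_k \in G$ and scalars $\alpha_1, \dotsc, \alpha_k$, complete positivity gives $[\Phi_n(\lambda_{g_i}^*\lambda_{g_j})]_{i,j} \ge 0$ in $\Mat{k}{\rL(G)}$, and evaluating this positive matrix at the vector $\eta = \sum_i \alpha_i\, e_i \otimes \lambda_{g_i}^*\Omega \in \CC^k \otimes \Ltwo(\rL(G),\tau)$ yields exactly $\sum_{i,j} \overline{\alpha_i}\alpha_j\, \phi_n(g_i^{-1}g_j) \ge 0$, using traciality of $\tau$ to move the outer unitaries inside. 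Unitality gives $\phi_n(e) = 1$, the strong convergence $\Phi_n(\lambda_g)\Omega \to \lambda_g\Omega$ gives $\phi_n \to 1$ pointwise, and the compactness of the induced operator $T_n$ forces $\phi_n \in c_0(G)$: since $\delta_g \to 0$ weakly as $g$ leaves every finite set and $T_n$ is compact, $\|T_n \delta_g\| \to 0$, so $|\phi_n(g)| = |\langle T_n\delta_g,\delta_g\rangle| \to 0$.

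The main obstacle is the converse direction, specifically the verification that $\phi_n$ is positive-definite: a general completely positive map is not a Fourier multiplier, so one cannot read off a positive-definite function directly, and the argument hinges on choosing the right vector in $\CC^k \otimes \Ltwo(\rL(G),\tau)$ so that positivity of the amplified matrix $[\Phi_n(\lambda_{g_i}^*\lambda_{g_j})]_{i,j}$ translates into positive-definiteness of the kernel $(g,h) \mapsto \phi_n(g^{-1}h)$. The deduction $\phi_n \in c_0(G)$ from $\Ltwo$-compactness (matrix coefficients of a compact operator along an orthonormal basis vanish at infinity) is the second point requiring care; the remaining convergence and normalisation checks are routine.
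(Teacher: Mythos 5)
The paper does not actually prove this statement---it is quoted from Choda's article---and your argument is precisely the standard one from that source: in one direction the Fourier multiplier $\lambda_g \mapsto \phi_n(g)\lambda_g$ obtained by restricting a Schur multiplier, in the other the matrix coefficients $\phi_n(g) = \tau(\Phi_n(\lambda_g)\lambda_g^*)$, whose positive-definiteness follows from applying $\mathrm{id}_k \otimes \Phi_n$ to the positive matrix $[\lambda_{g_i}^*\lambda_{g_j}]_{i,j}$ and testing against the vector you chose; all of your verifications (unitality, traciality, compactness giving $c_0$-decay along the orthonormal family $\delta_g$) are correct. The only slip is a cosmetic convention error: since $(\lambda_g)_{s,t} \neq 0$ exactly when $st^{-1} = g$, the Schur symbol that restricts to the Fourier multiplier on the \emph{left} regular representation is $[\phi_n(st^{-1})]_{s,t}$, not $[\phi_n(s^{-1}t)]_{s,t}$ (the latter multiplies the right regular representation); the corrected kernel is still positive semidefinite whenever $\phi_n$ is positive-definite, so the rest of your argument goes through unchanged.
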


  The article \cite{jolissaint02} describes basic properties of the Haagerup property for finite von Neumann algebras.
\begin{theorem}[{See \cite[Theorem 1.1]{jolissaint02}}]
  \label{thm:H-stable}
  Let $N \subset M$ be a finite index inclusion of von Neumann algebras.  Then $N$ has the Haagerup property if and only if $M$ has the Haagerup property.  
\end{theorem}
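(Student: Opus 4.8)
The plan is to use the completely positive map characterisation of the Haagerup property underlying the definition used here: a finite von Neumann algebra $(M,\tau)$ has the Haagerup property if and only if there is a sequence of normal, trace-preserving, unital completely positive maps $\phi_k \colon M \to M$ whose $\Ltwo$-extensions $\wh{\phi_k} \in \bo(\Ltwo(M,\tau))$ (determined by $\wh{\phi_k}(\wh{x}) = \wh{\phi_k(x)}$) are compact operators converging strongly to the identity. I would prove the two implications separately, invoking the finite index hypothesis only in the harder direction.

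For the implication ``$M$ has the Haagerup property $\Rightarrow$ $N$ has it'', let $E \colon M \to N$ be the trace-preserving conditional expectation and let $e_N \in \bo(\Ltwo(M,\tau))$ be the corresponding Jones projection, i.e. the orthogonal projection onto $\Ltwo(N,\tau)$. Given Haagerup maps $\phi_k$ on $M$, set $\psi_k = E \circ \phi_k|_N \colon N \to N$. These are normal, completely positive and subtracial, and their $\Ltwo$-extensions equal $e_N \wh{\phi_k} e_N$ restricted to $\Ltwo(N,\tau)$, hence are compact (a compression of a compact operator) and converge strongly to the identity of $\Ltwo(N,\tau)$. A routine renormalisation makes the maps unital and trace-preserving. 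Note that this direction does not use finite index; it works for any subalgebra carrying a trace-preserving conditional expectation.

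The interesting direction is ``$N$ has the Haagerup property $\Rightarrow$ $M$ has it'', and here finiteness of the index is essential. Finite index provides a finite Pimsner--Popa basis $\lambda_1, \dotsc, \lambda_d \in M$ with the reconstruction formula $x = \sum_i \lambda_i E(\lambda_i^* x)$ for all $x \in M$. Given Haagerup maps $\psi_k$ on $N$, I would define $\Phi_k \colon M \to M$ by
\begin{equation*}
  \Phi_k(x) = \sum_{i,j} \lambda_i \, \psi_k\bigl(E(\lambda_i^* x \lambda_j)\bigr)\, \lambda_j^*
  \eqstop
\end{equation*}
Complete positivity is clear once one recognises $\Phi_k$ as a composition of completely positive maps: the compression $x \mapsto (\lambda_i^* x \lambda_j)_{i,j}$ into $\mat{d}{M}$, the entrywise conditional expectation into $\mat{d}{N}$ followed by $\id_{\mat{d}{\CC}} \ot \psi_k$, and the map $(y_{ij}) \mapsto \sum_{i,j} \lambda_i y_{ij} \lambda_j^*$. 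For $\Ltwo$-compactness the finiteness of the basis is exactly what is needed: the $\Ltwo$-extension of $\Phi_k$ factors through $\id_{\CC^d} \ot \wh{\psi_k}$ on $\CC^d \ot \Ltwo(N,\tau)$ sandwiched between the bounded operators coming from left and right multiplication by the $\lambda_i$; since $d < \infty$ and $\wh{\psi_k}$ is compact, so is $\wh{\Phi_k}$.

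The main obstacle is the convergence $\Phi_k \to \id_M$ together with the correct normalisation. When $\psi_k$ is replaced by $\id_N$, the reconstruction identity collapses the formula not to $x$ but to $(\sum_i \lambda_i \lambda_i^*)\,x$, so the limit of $\Phi_k$ is left multiplication by the positive element $c = \sum_i \lambda_i \lambda_i^*$ rather than the identity. I would remove this defect using that finite index forces $c$ to be bounded and invertible, conjugating the construction by $c^{-1/2}$ (or passing to a tight Pimsner--Popa basis) so that the limiting map becomes $\id_M$, and then checking that these modifications preserve complete positivity, compactness and the subtracial estimate. Conceptually the same result can be obtained more cleanly in the language of Connes correspondences: restriction along $N \subset M$ handles the first implication, while for the second one induces the approximating $N$-$N$-correspondences to $M$ via $\Ltwo(M) \ot_N (\,\cdot\,) \ot_N \Ltwo(M)$, and finite index is precisely what guarantees that weak containment in the coarse correspondence is preserved and that the trivial $M$-$M$-correspondence $\Ltwo(M)$ survives as a summand to carry the approximating vectors. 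This correspondence picture sidesteps the explicit normalisation but trades it for the bookkeeping of induction and weak containment.
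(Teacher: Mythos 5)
Your easy direction is correct, and the formal skeleton of your induction in the hard direction (complete positivity via the factorisation through $\rM_d(M) \ra \rM_d(N)$, compactness of the $\Ltwo$-extension from finiteness of the basis) is also fine. But the normalisation step that you yourself flag as ``the main obstacle'' is exactly where the argument has a genuine gap. You correctly compute that $\Phi_k$ converges to multiplication by $c = \sum_i \lambda_i\lambda_i^*$, and you propose to repair this by conjugating by $c^{-1/2}$, justified only by $c$ being bounded and invertible. With only those properties the repair fails: the pointwise limit of $x \mapsto c^{-1/2}\Phi_k(x)c^{-1/2}$ is $c^{-1/2}(cx)c^{-1/2} = c^{1/2}xc^{-1/2}$, which is not $x$ unless $c$ commutes with $x$; and the alternative of a ``tight'' basis with $c=1$ does not exist in general, since for a II$_1$ subfactor $\tau(c) = [M:N] > 1$. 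What rescues your construction is a fact you never state: $c$ is automatically \emph{central} in $M$. This follows from your own formula by double counting: evaluating $\sum_{i,j}\lambda_i E(\lambda_i^* x \lambda_j)\lambda_j^*$ by summing first over $i$ (via the reconstruction formula applied to $y = x\lambda_j$) gives $xc$, while summing first over $j$ (via the adjointed formula $z = \sum_j E(z\lambda_j)\lambda_j^*$ applied to $z = \lambda_i^* x$) gives $cx$; hence $cx = xc$ for all $x \in M$. (This is essentially Watatani's theorem that the index of a conditional expectation admitting a finite quasi-basis is a central element.) Once centrality is known, $c^{-1/2}\Phi_k(\,\cdot\,)c^{-1/2} = c^{-1}\Phi_k(\,\cdot\,)$ is completely positive, has compact $\Ltwo$-extension and converges pointwise in $\| \cdot \|_2$ to the identity, so the proof can be completed -- but centrality of $c$ is the missing idea, not a routine check.

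Three smaller points. First, your maps are not subtracial: the construction only yields $\tau\circ\Phi_k \le C\tau$ for a uniform constant $C$ (of the order of $\|c\|^2$), and rescaling by $C^{-1}$ destroys convergence to the identity, so to meet the definition used in this paper you still need the standard (but not automatic) correction turning uniformly bounded-tracial compact approximations into subtracial, unital, trace-preserving ones. Second, your closing correspondence remark characterises the Haagerup property by ``weak containment in the coarse correspondence''; that is amenability -- the Haagerup property asks for weak containment of the trivial correspondence in $c_0$-correspondences. Third, the paper itself gives no proof (it cites Jolissaint), and the route in the literature sidesteps your normalisation problem entirely: a finite Pimsner--Popa basis identifies the basic construction $\langle M, e_N\rangle$ with a corner $p\rM_d(N)p$, the Haagerup property passes from $N$ to $\rM_d(N)$, then to the corner, and finally down to $M \subset \langle M, e_N\rangle$ by your easy direction. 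Note also that your argument needs a finite Pimsner--Popa basis inside $M$, which is a priori stronger than the paper's definition of finite index (finiteness of the commutant $N'$); for the paper's application, an index-two inclusion, this costs nothing.
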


Recently, in \cite{dawsfimaskalskiwhite13} the notion was systematically investigated in the framework of quantum groups.  We cite a special case of the main result of this article.
\begin{theorem}[{See \cite{dawsfimaskalskiwhite13}}]
  \label{thm:H-for-quantum-groups}
  Let $G$ be a compact quantum subgroup of $O_n^+$.  Then $\wh G$ has the Haagerup property if and only if $\Linfty(G)$ has the Haagerup property as a von Neumann algebra.
\end{theorem}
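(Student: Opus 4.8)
The plan is to prove both directions by relating the von Neumann algebraic Haagerup property of the finite von Neumann algebra $M := \Linfty(G)$, equipped with its tracial Haar state $\phi$, to the existence of a net of completely positive $c_0$-multipliers on $M$ approximating the identity, the latter being the incarnation of the Haagerup property of the discrete dual $\wh G$. Since $G \subset O_n^+$ is orthogonal, $\phi$ is a faithful normal trace and $\wh G$ is unimodular, which is what makes the two notions comparable. The basic tool is the Peter--Weyl decomposition $\Ltwo(M, \phi) = \bigoplus_{\alpha \in \mathrm{Irr}(G)} H_\alpha$, where $H_\alpha = \cspan\{u^\alpha_{ij}\}$ is the finite-dimensional space of matrix coefficients of the irreducible corepresentation $\alpha$. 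A positive-definite function $a = (a_\alpha)_\alpha \in \ell^\infty(\wh G)$ gives a normal completely positive multiplier $m_a \colon M \ra M$ whose $\Ltwo$-extension is block-diagonal for this decomposition, acting on $H_\alpha$ through $a_\alpha$; the dual $\wh G$ has the Haagerup property when such a net $(a_i)$ exists with $a_i \in c_0(\wh G)$, each $m_{a_i}$ unital and $\phi$-preserving, and $m_{a_i} \ra \mathrm{id}$ pointwise.

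For the implication from $\wh G$ to $M$ I would simply take $T_i := m_{a_i}$. These are normal, unital, completely positive and $\phi$-preserving by hypothesis, and $T_i \ra \mathrm{id}$ in the point-$\Ltwo$ topology follows from $m_{a_i} \ra \mathrm{id}$ together with the density of $\Pol(G)$ in $\Ltwo(M,\phi)$. The key point is that $\wh{T_i}$ is block-diagonal with blocks $a_{i,\alpha}$ on the finite-dimensional spaces $H_\alpha$, so $\wh{T_i}$ is a norm-limit of finite-rank operators exactly when $\|a_{i,\alpha}\|_{B(H_\alpha)} \ra 0$, i.e.\ when $a_i \in c_0(\wh G)$; this is precisely $\Ltwo$-compactness. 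Hence $M$ has the von Neumann algebraic Haagerup property.

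For the converse I would start from a net of normal, unital, completely positive, $\phi$-preserving maps $T_i \colon M \ra M$ with $\wh{T_i}$ compact and $T_i \ra \mathrm{id}$ in point-$\Ltwo$, and extract from each $T_i$ its \emph{multiplier part} by pairing with matrix coefficients: concretely one defines a function $a_i = (a_{i,\alpha})_\alpha$ by a Haar-averaged formula of the shape $a_{i,\alpha} \sim (\dim\alpha)(\mathrm{id} \ot \phi)\big((1 \ot (u^\alpha)^*)(T_i \ot \mathrm{id})(\ldots)\big)$. I would then verify three things: that each $a_i$ is positive-definite, so that $m_{a_i}$ is completely positive, which is a Schur-product computation turning complete positivity of $T_i$ into positive-definiteness of $a_i$; that $m_{a_i} \ra \mathrm{id}$ pointwise, which holds because pairing $\mathrm{id}$ with the matrix coefficients returns the trivial multiplier and $T_i \ra \mathrm{id}$; and that $a_i \in c_0(\wh G)$, which follows from compactness of $\wh{T_i}$, since the block norms $\|a_{i,\alpha}\|$ are controlled by the operator norm of $\wh{T_i}$ restricted to $H_\alpha$, and these must tend to $0$ along the net of irreducibles.

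The main obstacle is this reverse direction, and within it two points deserve care. First, the extraction $T_i \mapsto m_{a_i}$ really replaces $T_i$ by a genuine multiplier, namely its ``diagonal part'' with respect to the Peter--Weyl decomposition, and one must check that complete positivity survives this passage --- this is where the averaging against the Haar state and the completely-positive-to-positive-definite translation must be made precise. Second, one has to track the normalisations by $\dim\alpha$ carefully, so that $\Ltwo$-compactness of $\wh{T_i}$ yields honest $c_0$-decay of the symbols $a_i$ rather than mere boundedness; the unimodularity of $\wh G$, equivalently the traciality of $\phi$ coming from $G \subset O_n^+$, is exactly what keeps these weights under control.
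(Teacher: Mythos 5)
This theorem is not proved in the paper at all: it is quoted as a special case of the main result of \cite{dawsfimaskalskiwhite13}, so there is no internal argument to compare yours against. Measured against the proof in that reference, your plan follows essentially the same route: the forward direction via completely positive multipliers attached to positive-definite elements of $c_0(\wh G)$, whose $\Ltwo$-extensions are block-diagonal over the Peter--Weyl decomposition $\Ltwo(M,\phi) = \bigoplus_\alpha H_\alpha$ and hence compact exactly when the symbols decay; the converse via Haagerup-type averaging, extracting from each approximating map $T_i$ a multiplier and checking that complete positivity, pointwise convergence and compactness survive. You also place unimodularity correctly: traciality of the Haar state (coming from $G \subset O_n^+$) is what makes the $\Ltwo$-structure on each $H_\alpha$ match the matrix norm of the symbol without quantum-dimension weights, and the equivalence fails in general without it.

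That said, as it stands your text is a scaffold rather than a proof, and the two points you flag are precisely where all the work in \cite{dawsfimaskalskiwhite13} lies. The extraction formula is left with an ellipsis and a spurious normalisation: in the Kac case it should read $a_{i,\alpha} = (\id \ot \phi)\bigl((\id \ot T_i)(u^\alpha)\,(u^\alpha)^*\bigr)$ with \emph{no} factor of $\dim \alpha$, since the orthogonality relations $\phi\bigl(u^\alpha_{kj}(u^\alpha_{lj})^*\bigr) = \delta_{kl}/d_\alpha$ make this pairing recover the symbol of a multiplier on the nose. One must then prove, not assert, that $(a_{i,\alpha})_\alpha$ is positive definite; this is the quantum analogue of Haagerup's classical trick $g \mapsto \tau(\Phi(\lambda_g)\lambda_g^*)$ and rests on the multiplier theory built up in the earlier sections of the cited paper. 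Similarly, the passage from compactness of $\wh{T_i}$ to $c_0$-decay of the symbols needs the estimate $\|a_{i,\alpha}\| \leq \|\wh{T_i}\,P_\alpha\|$ (where $P_\alpha$ is the projection onto $H_\alpha$) together with the fact that $\|K P_\alpha\| \ra 0$ for $K$ compact and pairwise orthogonal finite-rank projections $P_\alpha$; both are true, and true for the reason you give, but they are the substance of the argument rather than routine checks. In short: right strategy, matching the actual source, but the core verifications are named rather than performed.
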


\subsubsection{The complete matrix approximation property}
\label{sec:CMAP}

The complete metric approximation property (CMAP) for groups goes back to the work of Haagerup in \cite{haagerup79} and de Canni{\`e}re-Haagerup \cite{decannierehaagerup85}.  In \cite{ricardxu06} it is proved that the free product of groups having CMAP still has CMAP.
\begin{theorem}[{See \cite[Theorem 4.13]{ricardxu06}}]
  \label{thm:free-products-preserve-CMAP}
  If $G_1$ and $G_1$ are groups with the CMAP, then also $G_1 * G_2$ hast the CMAP.
\end{theorem}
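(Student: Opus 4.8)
The plan is to reformulate the complete metric approximation property through the Cowling--Haagerup constant $\Lambda_{\mathrm{cb}}(G)$, defined as the infimum of $\limsup_\alpha \|M_{\phi_\alpha}\|_{\mathrm{cb}}$ over all nets of finitely supported functions $\phi_\alpha \colon G \to \CC$ for which the associated Herz--Schur multipliers $M_{\phi_\alpha}$ on $\rL(G)$ converge to the identity in the point-$\sigma$-weak topology; thus $G$ has the CMAP precisely when $\Lambda_{\mathrm{cb}}(G) = 1$. Since $G_1$ and $G_2$ embed as subgroups of $G_1 * G_2$ and $\Lambda_{\mathrm{cb}}$ is monotone under passage to subgroups, the inequality $\Lambda_{\mathrm{cb}}(G_j) \le \Lambda_{\mathrm{cb}}(G_1 * G_2)$ is automatic; the whole content is to produce, out of the CMAP data of the two factors, finitely supported multipliers on $G_1 * G_2$ whose completely bounded norms tend to $1$.

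First I would fix nets $\phi^{(j)}_\alpha$ of finitely supported functions on $G_j$ with $\phi^{(j)}_\alpha(e) = 1$, $\phi^{(j)}_\alpha \to 1$ pointwise and $\|M_{\phi^{(j)}_\alpha}\|_{\mathrm{cb}} \to 1$. The natural way to glue them is the multiplicative free-product multiplier $\Phi_\alpha$ on $G_1 * G_2$, determined on a reduced word $x_1 x_2 \cdots x_\ell$ (with letters $x_t$ alternating between $G_1 \setminus \{e\}$ and $G_2 \setminus \{e\}$) by $\Phi_\alpha(x_1 \cdots x_\ell) = \prod_{t=1}^\ell \phi^{(j(t))}_\alpha(x_t)$. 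Under the canonical tracial identification $\rL(G_1 * G_2) \cong \rL(G_1) * \rL(G_2)$, this multiplier is the free product of the two factor multipliers; dilating each $M_{\phi^{(j)}_\alpha}$ to a unital completely positive map, forming the free product of unital completely positive maps in the sense of Boca, and compressing back (exactly as in de Canni\`ere--Haagerup \cite{decannierehaagerup85}), one obtains that $\Phi_\alpha$ is a completely bounded multiplier with $\|M_{\Phi_\alpha}\|_{\mathrm{cb}}$ bounded by the product $\|M_{\phi^{(1)}_\alpha}\|_{\mathrm{cb}} \, \|M_{\phi^{(2)}_\alpha}\|_{\mathrm{cb}}$, which tends to $1$, while $\Phi_\alpha \to 1$ pointwise. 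The defect is that $\Phi_\alpha$ is supported on reduced words of arbitrary length and hence is not finitely supported, so by itself it only witnesses an approximation by compact -- not finite-rank -- multipliers.

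The main obstacle, and the genuine technical heart, is to restore finite support without destroying the completely bounded estimate. I would do this by multiplying $\Phi_\alpha$ with a cut-off supported on reduced words of block-length at most $N$. A hard length cut-off is dangerous: the completely bounded norm of the characteristic function of words of length $\le N$ grows with $N$ (the free-group phenomenon of Haagerup--Szwarc). The resolution is to let $N \to \infty$ simultaneously with $\alpha$ and to exploit that the truncation tail of $\Phi_\alpha$ decays, controlling the completely bounded norm of the truncated multiplier by the Khintchine-type inequality for reduced free products established by Ricard and Xu \cite{ricardxu06}, which estimates the operator norm of elements supported on words of bounded length. Balancing the two limits yields a net of finitely supported multipliers on $G_1 * G_2$ with completely bounded norms tending to $1$, whence $\Lambda_{\mathrm{cb}}(G_1 * G_2) = 1$. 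This mirrors the route for the Haagerup property in Theorem \ref{thm:free-products-preserve-H}, the essential difference being that the CMAP requires the quantitative control of completely bounded norms supplied by the free-product Khintchine inequalities, rather than the mere existence of $c_0$ approximants.
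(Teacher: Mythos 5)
First, a contextual remark: the paper does not prove this statement at all -- it is imported verbatim from \cite[Theorem 4.13]{ricardxu06}, whose proof is the main content of that article. So your proposal must be measured against the Ricard--Xu argument itself, of which it is a partial reconstruction. You correctly identify the skeleton: the reformulation via $\Lambda_{\mathrm{cb}}$, the multiplicative free-product multiplier, the need to restore finite support by word-length truncation, and the role of Khintchine-type inequalities in controlling word-length projections. But there is a genuine gap at the one step you present as routine, namely the claim that $\|M_{\Phi_\alpha}\|_{\mathrm{cb}} \le \|M_{\phi^{(1)}_\alpha}\|_{\mathrm{cb}}\, \|M_{\phi^{(2)}_\alpha}\|_{\mathrm{cb}}$ by ``dilating to u.c.p.\ maps, applying Boca's free product, and compressing back''. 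Boca's theorem applies to (possibly operator-valued) positive definite functions $\Psi_j$ normalized by $\Psi_j(e)=1$, and produces the function $x_1 \dotsm x_\ell \mapsto \Psi_{j(1)}(x_1) \dotsm \Psi_{j(\ell)}(x_\ell)$. If you dilate a completely bounded $\phi^{(j)}_\alpha$ to an $\rM_2$-valued positive definite function with $\phi^{(j)}_\alpha$ in the off-diagonal corner, then the corner of the Boca free product of the dilations is a sum over paths through the matrix corners; it is \emph{not} the product of the corners, so compression does not return $\Phi_\alpha$. The soft argument only works when the $\phi^{(j)}_\alpha$ are themselves positive definite (note that a multiplier with $\phi(e) = 1 = \|\phi\|_{\mathrm{cb}}$ is automatically positive definite), and CMAP precisely cannot provide completely positive approximants -- that would be amenability. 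Moreover, if a product bound of this kind were available by such an argument, then combined with your own truncation scheme it would yield $\Lambda_{\mathrm{cb}}(G_1 * G_2) \le \Lambda_{\mathrm{cb}}(G_1)\Lambda_{\mathrm{cb}}(G_2)$ for arbitrary weakly amenable groups; whether weak amenability passes to free products at all when the constants exceed $1$ is a well-known open problem, which is strong evidence that no such soft bound exists.

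What actually happens in \cite{ricardxu06} is that the defect $\varepsilon_\alpha = \|M_{\phi^{(j)}_\alpha}\|_{\mathrm{cb}} - 1$ is amplified by the word length: the natural estimate for the free product of genuinely completely bounded (non-positive-definite) approximants, restricted to words of block length $d$, degrades like $(1+\varepsilon_\alpha)^d$ times a polynomial in $d$, and the Khintchine inequalities are needed already to obtain \emph{that}, not merely to control the truncation. The theorem then holds only because $\varepsilon_\alpha \to 0$, by balancing $\varepsilon_\alpha$ against the truncation length -- this is exactly where the hypothesis of constant $1$ is consumed, and why the result is not known for larger Cowling--Haagerup constants. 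A secondary but real issue: to make the truncation tail small you need decay of $\Phi_\alpha$ in \emph{completely bounded norm} on the length-$k$ pieces, not pointwise decay, which forces you to damp the factor multipliers (e.g.\ replace $\phi^{(j)}_\alpha$ by $\theta\phi^{(j)}_\alpha + (1-\theta)\delta_e$) and introduces a third parameter into the balance. In short, your sketch hides the heart of the Ricard--Xu proof inside an unjustified, and in this generality unavailable, product bound; the difficulty is not only to restore finite support, but to bound the free-product multiplier at all.
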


The von Neumann algebraic analogue of CMAP is called \Wstar-completely contractive approximation property (\Wstar-CCAP).  We state a special case of a result in \cite{haagerup86} (see also \cite{haagerupkraus94}).
\begin{theorem}[{See \cite{haagerup86}}]
  \label{thm:CMAP-is-operator-algebraic}
  Let $G$ be a discrete group.  Then $G$ has CMAP if and only if $\rL(G)$ has the $\Wstar$-CCAP.
\end{theorem}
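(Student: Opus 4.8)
The statement is a classical theorem of Haagerup, so the plan is to reconstruct its proof through the standard dictionary between functions on $G$ and normal completely bounded maps on $\rL(G)$. Throughout write $\tau$ for the canonical trace, $\lambda_g$ for the canonical generating unitaries (so that $\tau(\lambda_g^*\lambda_h) = \delta_{g,h}$), and for a symbol $\varphi \colon G \to \CC$ let $m_\varphi$ be the map determined by $m_\varphi(\lambda_g) = \varphi(g)\lambda_g$, writing $\|\varphi\|_{M_0A(G)} := \|m_\varphi\|_{\mathrm{cb}}$ for the completely bounded Fourier multiplier norm. The two properties to be matched are: $G$ has CMAP, i.e. there is a net of finitely supported $\varphi_i$ with $\varphi_i \to 1$ pointwise and $\limsup_i \|m_{\varphi_i}\|_{\mathrm{cb}} \le 1$; and $\rL(G)$ has $\Wstar$-CCAP, i.e. there is a net of normal finite-rank maps $T_i \colon \rL(G) \to \rL(G)$ converging to $\id$ in the point-$\sigma$-weak topology with $\limsup_i \|T_i\|_{\mathrm{cb}} \le 1$.

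First I would dispatch the implication from CMAP to $\Wstar$-CCAP, which is immediate from the dictionary. Given a CMAP net $(\varphi_i)$, the maps $m_{\varphi_i}$ are normal (since $x \mapsto \tau(\lambda_g^* x)$ is normal), are of finite rank because their range lies in the span of $\{\lambda_g : \varphi_i(g) \ne 0\}$, and satisfy $\|m_{\varphi_i}\|_{\mathrm{cb}} = \|\varphi_i\|_{M_0A(G)}$. Pointwise convergence $\varphi_i \to 1$ gives $m_{\varphi_i}(\lambda_g) \to \lambda_g$ $\sigma$-weakly, hence $m_{\varphi_i} \to \id$ pointwise on the $\sigma$-weakly dense span of the $\lambda_g$, and this extends to all of $\rL(G)$ by the uniform bound on $\|m_{\varphi_i}\|_{\mathrm{cb}}$. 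So the $m_{\varphi_i}$ witness $\Wstar$-CCAP.

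The substantial direction is the converse. To a normal completely bounded $T$ I would associate the symbol $\varphi_T(g) = \tau(\lambda_g^* T(\lambda_g))$ and aim to show $\|m_{\varphi_T}\|_{\mathrm{cb}} \le \|T\|_{\mathrm{cb}}$. Granting this, a $\Wstar$-CCAP net $(T_i)$ produces symbols $\varphi_i := \varphi_{T_i}$ with $\varphi_i \to 1$ pointwise (by point-$\sigma$-weak convergence $T_i \to \id$) and $\limsup_i \|m_{\varphi_i}\|_{\mathrm{cb}} \le 1$. Since each $T_i$ has finite rank, one verifies that $\varphi_i$ lies in the Fourier algebra $A(G)$; using that finitely supported functions are dense in $A(G)$ and that $\|\cdot\|_{M_0A(G)} \le \|\cdot\|_{A(G)}$, a truncation replaces $\varphi_i$ by finitely supported $\psi_i$ with $\psi_i \to 1$ pointwise and $\limsup_i \|m_{\psi_i}\|_{\mathrm{cb}} \le 1$, which is exactly CMAP.

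The hard part will be the inequality $\|m_{\varphi_T}\|_{\mathrm{cb}} \le \|T\|_{\mathrm{cb}}$. The naive strategy — averaging the completely isometric perturbations $x \mapsto \lambda_g^* T(\lambda_g x)$ over $g \in G$ — would recover $m_{\varphi_T}$ only at the cost of an invariant mean, and hence would require $G$ to be amenable. Circumventing this is the real content of Haagerup's argument: one instead realizes $m_{\varphi_T}$ as the image of $T$ under a completely contractive projection onto the space of Fourier multipliers, built from the trace together with the comultiplication $\Delta(\lambda_g) = \lambda_g \otimes \lambda_g$ (equivalently the multiplicative unitary) rather than from any averaging over $G$. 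Proving that this projection is completely contractive is the crux; the remaining points — normality and finite rank of $m_\varphi$, the membership $\varphi_i \in A(G)$, and the truncation — are routine.
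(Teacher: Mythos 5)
The paper itself gives no proof of this theorem: it is quoted as a special case of Haagerup's 1986 result (see also Haagerup--Kraus), so there is no in-paper argument to compare against, and the right benchmark is the cited proof. Measured against that, your outline is a correct reconstruction of the standard argument: the easy direction, the reduction of the converse to the inequality $\|m_{\varphi_T}\|_{\mathrm{cb}} \le \|T\|_{\mathrm{cb}}$, the membership $\varphi_{T_i} \in A(G)$ for finite-rank normal $T_i$, and the truncation using norm-density of finitely supported functions in $A(G)$ are all exactly as in the literature. Moreover, the mechanism you name for the crux (trace plus comultiplication, no averaging over $G$) is the right one; let me make it explicit, since once written down it is two lines rather than a hard estimate. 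Let $\Delta \colon \rL(G) \to \rL(G) \vnt \rL(G)$ be the normal $*$-embedding with $\Delta(\lambda_g) = \lambda_g \ot \lambda_g$ and let $E_\Delta$ be the trace-preserving conditional expectation onto $\Delta(\rL(G))$, which satisfies $E_\Delta(\lambda_g \ot \lambda_h) = \delta_{g,h}\, \lambda_g \ot \lambda_g$. Expanding $T(\lambda_g)$ in $\Ltwo$ and using $\Ltwo$-continuity of $E_\Delta$, one gets for every normal completely bounded $T$
\begin{equation*}
  E_\Delta\bigl((\id \ot T)(\Delta(\lambda_g))\bigr)
  =
  E_\Delta\bigl(\lambda_g \ot T(\lambda_g)\bigr)
  =
  \tau\bigl(\lambda_g^* T(\lambda_g)\bigr)\, \lambda_g \ot \lambda_g
  =
  \varphi_T(g)\, \Delta(\lambda_g)
  \eqcomma
\end{equation*}
so that $m_{\varphi_T} = \Delta^{-1} \circ E_\Delta \circ (\id \ot T) \circ \Delta$. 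Since $\Delta$ and $\Delta^{-1}$ are complete isometries, $E_\Delta$ is completely contractive and $\|\id \ot T\|_{\mathrm{cb}} = \|T\|_{\mathrm{cb}}$ (normality of $T$ is used here), the bound $\|m_{\varphi_T}\|_{\mathrm{cb}} \le \|T\|_{\mathrm{cb}}$ follows. In other words, the insight is the formula itself, which you had; the complete contractivity you flagged as the remaining crux is then automatic.

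One step you call routine does need a repair as stated. In the forward direction you deduce $m_{\varphi_i} \to \id$ point-$\sigma$-weakly from convergence on the $\sigma$-weakly dense span of the $\lambda_g$ plus the uniform bound on $\|m_{\varphi_i}\|_{\mathrm{cb}}$. That implication is not valid in general: uniform boundedness propagates pointwise $\sigma$-weak convergence only to the \emph{norm} closure of the span, not to its $\sigma$-weak closure. The standard fix is to pass to preadjoints: $(m_{\varphi_i})_*$ acts on $A(G) \cong \rL(G)_*$ as pointwise multiplication by $\varphi_i$, these maps are uniformly bounded in norm, and they converge in norm on finitely supported elements of $A(G)$, which are norm-dense; hence $(m_{\varphi_i})_* \to \id$ in the point-norm topology on $A(G)$, which is precisely point-$\sigma$-weak convergence of $m_{\varphi_i}$ on $\rL(G)$. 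With this predual argument (which also quietly underlies your truncation step in the converse), your proof is complete and coincides with the one the paper invokes by citation.
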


The following stability result for the \Wstar-CCAP is well known. It can be proved using \cite[Theorem 12.3.13]{brownozawa08}.
\begin{theorem}
  Let $N \subset M$ be a finite index inclusion of von Neumann algebras.  Then $N$ has the \Wstar-CCAP if and only if $M$ has the \Wstar-CCAP.
\end{theorem}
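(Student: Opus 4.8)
The plan is to prove the two implications separately, treating the restriction from $M$ to $N$ and the induction from $N$ to $M$ independently; only the latter should need the finite index hypothesis.

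First I would settle the direction ``$M$ has \Wstar-CCAP $\Rightarrow$ $N$ has \Wstar-CCAP''. Since $N \subset M$ is an inclusion of finite von Neumann algebras sharing a faithful normal tracial state $\tau$, there is a unique $\tau$-preserving normal conditional expectation $E: M \ra N$, which is completely contractive. If $(\Phi_i)$ is a net of normal finite-rank maps on $M$ with $\limsup_i \|\Phi_i\|_{cb} \leq 1$ converging to $\id_M$ in the point-$\sigma$-weak topology, then setting $\psi_i = E \circ \Phi_i \circ \iota$, where $\iota: N \ra M$ is the inclusion, yields normal finite-rank maps on $N$ (their ranges lie in $E$ of a finite-dimensional space) with $\|\psi_i\|_{cb} \leq \|\Phi_i\|_{cb}$ and $\psi_i \ra \id_N$ pointwise. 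Hence $N$ has \Wstar-CCAP. This direction does not use finiteness of the index, only the expectation.

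For the converse I would invoke the finite index structure together with \cite[Theorem 12.3.13]{brownozawa08}, which governs the behaviour of the weak-$*$ completely bounded approximation constant under the Jones basic construction. The geometric input is a Pimsner--Popa basis: finite index of $N \subset M$ supplies finitely many elements $\lambda_1, \dotsc, \lambda_k \in M$ so that every $x \in M$ decomposes as $x = \sum_{j} \lambda_j E(\lambda_j^* x)$, exhibiting $M$ as a finitely generated left $N$-module inside $\langle M, e_N \rangle$. Given witnessing maps $(\psi_i)$ for $N$, I would lift them to normal finite-rank maps $\Phi_i$ on $M$ through this basis and the expectation, then check that the $\Phi_i$ converge to $\id_M$ point-$\sigma$-weakly while keeping $\limsup_i \|\Phi_i\|_{cb} \leq 1$.

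The hard part will be exactly this last norm bookkeeping. A naive lift through the basis controls only the \emph{boundedness} of the approximation constant, not its value, whereas \Wstar-CCAP demands the \emph{metric} bound $\limsup_i \|\Phi_i\|_{cb} \leq 1$; preventing the completely bounded norms from degrading by an index-dependent factor is the delicate step. This is precisely what the basic-construction argument packaged in \cite[Theorem 12.3.13]{brownozawa08} is designed to accomplish, so once the constant is shown to transfer without loss in both directions, the stated equivalence follows.
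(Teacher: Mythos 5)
Your easy direction is fine: compressing with the trace-preserving normal conditional expectation $\rE \colon M \ra N$ preserves normality, finite rank and completely bounded norms, so $N$ inherits the \Wstar-CCAP from $M$. The genuine gap is in the converse, located exactly at the step you defer as ``norm bookkeeping''. For the basis lift $\Phi_i(x) = \sum_j \lambda_j \psi_i(\rE(\lambda_j^*x))$ the only available estimate is $\|\Phi_i\|_{\mathrm{cb}} \leq \bigl(\sum_j \|\lambda_j\|^2\bigr)\|\psi_i\|_{\mathrm{cb}}$, and this index-sized loss is intrinsic to the construction: point-$\sigma$-weak convergence $\psi_i \ra \id_N$ gives no norm control whatsoever (indeed $\|\psi_i - \id_N\| \geq 1$ for any finite-rank map whenever $N$ is infinite dimensional), so writing $\Phi_i = \id_M + \sum_j L_{\lambda_j}\circ(\psi_i - \id_N)\circ \rE \circ L_{\lambda_j^*}$ does not help, and there is no way to conclude $\limsup_i \|\Phi_i\|_{\mathrm{cb}} \leq 1$. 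This route can therefore only prove that $M$ has the \Wstar-CBAP with \emph{some} finite constant, never the \Wstar-CCAP. Appealing to \cite[Theorem 12.3.13]{brownozawa08} does not close the gap, because that result is not a device for repairing basis lifts: what it supplies are permanence properties of the Cowling--Haagerup constant $\Lambda_{\mathrm{cb}}$ of a von Neumann algebra --- it does not increase when passing to a von Neumann subalgebra with a normal conditional expectation, and it is unchanged under amplification by $\bo(\ell^2)$ and under compression by a projection of central support $1$. (A further small issue: the paper's definition of finite index --- finiteness of $N'$ --- does not in general produce a \emph{finite} Pimsner--Popa basis, only a countable one.)

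The argument the paper's citation points to uses those permanence properties through the basic construction, with no transport of maps along a basis. Finite index means, by the paper's definition, that $N'$ and hence $\langle M, e_N\rangle = JN'J$ is a finite von Neumann algebra. Consequently there is a trace-preserving normal conditional expectation of $\langle M, e_N\rangle$ onto $M$, and your easy-direction argument gives $\Lambda_{\mathrm{cb}}(M) \leq \Lambda_{\mathrm{cb}}(\langle M, e_N\rangle)$. On the other hand, $e_N$ has central support $1$ in $\langle M, e_N\rangle$ and $e_N\langle M, e_N\rangle e_N = Ne_N \cong N$, so the compression/amplification permanence yields $\Lambda_{\mathrm{cb}}(\langle M, e_N\rangle) = \Lambda_{\mathrm{cb}}(N)$. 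Combining, $\Lambda_{\mathrm{cb}}(M) \leq \Lambda_{\mathrm{cb}}(N) \leq \Lambda_{\mathrm{cb}}(M)$, so the constants agree and in particular the \Wstar-CCAP passes in both directions. The mechanism that keeps the constant at $1$ is that the approximating maps on $M$ are obtained from the $\psi_i$ by amplifying to $\psi_i \ot \id_{\bo(\ell^2)}$ and then compressing by projections; both operations preserve the bound $\|\cdot\|_{\mathrm{cb}}\leq 1$, whereas summing over a Pimsner--Popa basis does not. The basis enters only to verify structural facts about $\langle M, e_N\rangle$, not to define the maps.
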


In the context of discrete quantum groups, CMAP was studied as well.
\begin{theorem}[{See \cite{krausruan97}}]
  \label{thm:CMAP-for-quantum-groups}
  Let $G$ be compact quantum subgroup of $O_n^+$.  Then $\wh G$ has the CMAP if and only if $\Linfty(G)$ has the \Wstar-CCAP.
\end{theorem}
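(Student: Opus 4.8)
Since this is the quantum-group analogue of Theorem~\ref{thm:CMAP-is-operator-algebraic}, the plan is to build a dictionary between completely bounded multipliers of the discrete quantum group $\wh G$ and the finite-rank normal completely bounded maps on $\Linfty(G)$ witnessing the $\Wstar$-CCAP, and then to move the approximating nets back and forth across it. By Woronowicz's Peter--Weyl theory the Haar state $\phi$ is faithful on $\Pol(G)$, which is the algebraic direct sum $\bigoplus_\alpha \lspan\{u^\alpha_{ij}\}$ indexed by the irreducible corepresentations $\alpha$, with carrier spaces $H_\alpha$. A \emph{multiplier} of $\wh G$ is a symbol $a = (a_\alpha)_\alpha$, $a_\alpha \in \bo(H_\alpha)$, inducing the normal map $m_a \colon \Linfty(G) \ra \Linfty(G)$ that acts on the block $u^\alpha$ by left multiplication by $a_\alpha$; it is \emph{finitely supported} when $a \in \cc(\wh G)$, i.e. $a_\alpha = 0$ off a finite set. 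By definition $\wh G$ has the CMAP exactly when there is a net of such finitely supported $m_{a_i}$ with $\|m_{a_i}\|_{\mathrm{cb}} \ra 1$ converging to $\id$ in the point-$\sigmaweak$ topology.

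One implication is immediate: if $\wh G$ has the CMAP, the witnessing multipliers $m_{a_i}$ are finite-rank (by finiteness of their support), normal and completely bounded, with $\|m_{a_i}\|_{\mathrm{cb}} \ra 1$, and they converge to $\id$ in the point-$\sigmaweak$ topology -- which is precisely the $\Wstar$-CCAP for $\Linfty(G)$.

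For the converse I would start from an arbitrary net $(T_i)$ of finite-rank normal completely bounded maps with $\|T_i\|_{\mathrm{cb}} \ra 1$ and $T_i \ra \id$ in the point-$\sigmaweak$ topology, and \emph{average} each $T_i$ over the quantum-group structure to replace it by a genuine multiplier without inflating the completely bounded norm. Concretely, there is a projection $P$ from the normal completely bounded maps on $\Linfty(G)$ onto the multipliers $m_a$, defined by integrating $T$ against the matrix coefficients $u^\alpha_{ij}$ through the Haar state; Woronowicz's orthogonality relations identify its value on the block $\alpha$ with a symbol $a_\alpha \in \bo(H_\alpha)$ and give $P(\id) = \id$. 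Realising $P$ as an average of the complete isometries implementing translation by $G$ shows that $P$ is a complete contraction, so $\|P(T_i)\|_{\mathrm{cb}} \leq \|T_i\|_{\mathrm{cb}} \ra 1$; since $\id$ is fixed by $P$, the convergence $T_i \ra \id$ passes to $P(T_i) \ra \id$. Finiteness of the rank of $T_i$ forces the symbol of $P(T_i)$ to lie in $\co(\wh G)$, and a final truncation to finitely many blocks, with a renormalisation restoring $\|\cdot\|_{\mathrm{cb}} \ra 1$, yields the finitely supported multipliers required by the definition of the CMAP.

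The substantial work -- and the main obstacle -- is the averaging step: one must verify that $P(T)$ is a normal map that is genuinely a Fourier multiplier on every block, and that the contraction estimate survives the passage to the von Neumann algebra $\Linfty(G)$. This rests on the modular theory of $\phi$ and on the multiplicative unitary implementing $\Delta$; because $G$ is orthogonal, $\phi$ is tracial (Section~\ref{sec:compact-matrix-quantum-groups}), which removes the modular complications and makes the complete isometries above genuine $*$-preserving translations. This analysis is carried out by Kraus and Ruan in \cite{krausruan97}, to which we refer for the details.
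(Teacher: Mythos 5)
First, a structural remark: the paper itself contains no proof of this statement --- Theorem \ref{thm:CMAP-for-quantum-groups} is quoted from Kraus--Ruan \cite{krausruan97} --- so your attempt can only be measured against the argument of that reference. Your overall architecture does match it: the easy direction is exactly as you say (a finitely supported symbol gives a normal finite-rank completely bounded map, since each block $\lspan\{u^\alpha_{ij}\}$ is finite dimensional), and the converse is indeed proved by averaging finite-rank normal maps into multipliers, slicing with the Haar state against conjugation by the fundamental unitary; traciality of the Haar state (automatic here, since $G \subset O_n^+$ is orthogonal, i.e.\ of Kac type) is precisely what makes this averaging completely contractive and lets it fix the identity.

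There is, however, one step that fails as written, and it is the one you treat as routine, rather than the averaging (which in the tracial case is standard). From ``the symbol of $P(T_i)$ lies in $\co(\wh G)$'' you cannot conclude that a final truncation to finitely many blocks, with a renormalisation, keeps $\|\cdot\|_{\mathrm{cb}}$ near $1$: truncating the symbol to a finite set $F$ of blocks amounts to composing with the multiplier $m_{1_F}$, and indicators of finite sets are in general very bad multipliers. On the free group $\freegrp{2}$, the completely bounded multiplier norm of the indicator function of a ball of radius $k$ tends to infinity with $k$; since the duals relevant to this paper contain free products such as $\ZZ_s^{*n-1}$ (Theorem \ref{thm:properties-hyperoctahedral-series}), this obstruction is genuinely present, and no renormalisation can repair it. The actual argument (Haagerup's in the group case, and its Kac-algebra version in \cite{krausruan97}) uses that finite rank gives much more than $\co$-decay: writing $T = \sum_k \omega_k(\cdot)\,x_k$ with $\omega_k$ normal, the averaged symbol $\psi(g) = \tau(T(\lambda_g)\lambda_g^*)$ lies in $\ell^2(\Gamma)$, and $\ell^2(\Gamma)$ embeds in the Fourier algebra $A(\Gamma)$ with $\|\psi\|_{\mathrm{cb}} \leq \|\psi\|_{A(\Gamma)} \leq \|\psi\|_{\ell^2}$; truncation is continuous in $\ell^2$, hence in completely bounded multiplier norm, and only through this estimate does $\|\cdot\|_{\mathrm{cb}} \ra 1$ survive the passage to finitely supported symbols. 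Your sketch is missing exactly this $\ell^2$-estimate (and its quantum analogue), and without it the final step of the converse direction collapses.
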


\section{Homogeneous quantum subgroups of $\Cstar(\ZZ_2^{*n}) \Join \cont(\rS_n)$}
\label{sec:classification-of-qsubgroups}

Recall from Section \ref{sec:semi-direct-products} that $\Cstar(\ZZ_2^{*n}) \Join \cont(\rS_n)$ denotes the CMQG whose \Cstar-algebra is isomorphic with $\Cstar(\ZZ_2^{*n}) \ot \cont(\rS_n)$ and whose fundamental corepresentation is $(u_{a_i}p_{ij})$.  Here $a_1, \dotsc, a_n$ denote the natural generators of $\ZZ_2^{*n}$ and $(p_{ij})$ is the fundamental corepresentation of $\cont(\rS_n)$.  Note that $(u_{a_i}p_{ij})^2 = p_{ij}$ is a central projection in $\Cstar(\ZZ_2^{*n}) \ot \cont(\rS_n)$ for all $i,j \in \{1, \dotsc, n\}$.  The next theorem tells us in particular that $\Cstar(\ZZ_2^{*n}) \Join \cont(\rS_n)$ is the universal homogeneous quantum group with this property.

\begin{theorem}
  \label{thm:classification-of-qsubgroups}
  Let $(A,u)$ be a homogeneous orthogonal compact matrix quantum group such that $u_{ij}^2$ is a central projection in $A$ for all $i,j \in \{1, \dotsc, n\}$.  Then there is a quotient $\ZZ_2^{*n} \thra \Gamma$ whose kernel is invariant under the natural action of $\rS_n$, such that $A$ is a version of $\Cstar(\Gamma) \Join \cont(\rS_n)$.  In particular, we have $A \cong \Cstar(\Gamma) \Join \cont(\rS_n)$ if $A$ is in its maximal version.
\end{theorem}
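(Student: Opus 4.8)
The plan is to extract from $u$ a distinguished copy of $\cont(\rS_n)$ and a reflection group, and then to reconstruct $A$ from these two pieces via a central decomposition indexed by $\rS_n$. All the relations I use are purely algebraic, so I would carry out the whole argument inside the maximal version $A^{\max}$, which shares the polynomial algebra $\Pol(A)$; proving $A^{\max} \cong \Cstar(\Gamma) \Join \cont(\rS_n)$ establishes both assertions at once. First I would record the elementary consequences of the hypotheses. Since $u_{ij}$ is self-adjoint and $p_{ij} := u_{ij}^2$ is a projection, the spectrum of $u_{ij}$ lies in $\{-1,0,1\}$, so $u_{ij}^3 = u_{ij}$ and $u_{ij} = p_{ij}u_{ij} = u_{ij}p_{ij}$. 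Unitarity of $u$ together with self-adjointness of its entries gives $u u^t = u^t u = 1$, hence $\sum_k p_{ik} = \sum_k p_{ki} = 1$; as a sum of projections equal to $1$ forces pairwise orthogonality, the rows and columns of $(p_{ij})$ are orthogonal partitions of unity. Centrality of the $p_{ij}$ then yields $u_{ik}u_{il} = 0$ and $u_{ki}u_{li} = 0$ for $k \neq l$, and a short computation with $\Delta(u_{ij}) = \sum_k u_{ik} \ot u_{kj}$ kills the cross terms, so $\Delta(p_{ij}) = \sum_k p_{ik} \ot p_{kj}$. Thus $(p_{ij})$ is a corepresentation which is a magic unitary with central entries.

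Next I would introduce the reflections $q_i := \sum_j u_{ij}$. Using row orthogonality one checks $q_i^* = q_i$ and $q_i^2 = \sum_j p_{ij} = 1$, as well as $u_{ij} = q_i p_{ij} = p_{ij} q_i$; since the $p_{ij}$ are central, the $q_i$ commute with the subalgebra they generate, which (being a magic unitary with commuting entries) is a quotient of $\cont(\rS_n)$. Let $\Gamma = \langle q_1, \dots, q_n \rangle$, a quotient of $\ZZ_2^{*n}$. Here homogeneity enters: the morphism $\phi : A \ra \cont(\rS_n)$ produces, for each $\sigma \in \rS_n$, an automorphism $\alpha_\sigma = (\mathrm{ev}_\sigma \circ \phi \ot \id) \circ \Delta$ of $A^{\max}$, and a direct computation (using $\Delta(q_i) = \sum_k u_{ik} \ot q_k$) gives $\alpha_\sigma(q_i) = q_{\sigma^{-1}(i)}$ and $\alpha_\sigma(p_{ij}) = p_{\sigma^{-1}(i)\,j}$. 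Applying $\alpha_\sigma$ to any relation among the $q_i$ shows that the kernel of $\ZZ_2^{*n} \thra \Gamma$ is invariant under the permutation action of $\rS_n$.

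Finally I would assemble the algebra. Set $f_\tau = \prod_i p_{\tau(i)\,i}$ for $\tau \in \rS_n$; these are orthogonal central projections with $\sum_\tau f_\tau = 1$, and $\alpha_\sigma(f_\tau) = f_{\sigma^{-1}\tau}$ permutes them transitively, so every $f_\tau$ is nonzero and $A^{\max} = \bigoplus_\tau A^{\max} f_\tau$. I would then identify the corner at $\tau = e$: since $u_{ij} f_e = 0$ for $i \neq j$, the closed ideal generated by the off-diagonal entries equals $(1-f_e)A^{\max}$, whence $A^{\max} f_e \cong A^{\max}/(u_{ij} = 0,\ i \neq j)$, which by Proposition \ref{prop:diagonal-subgroups-of-maximal-versions} is the \emph{full} group \Cstar-algebra $\Cstar(\Gamma)$, with $q_i f_e \leftrightarrow u_{g_i}$. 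Transporting by the isomorphisms $\alpha_{\tau^{-1}}$ (and using $\rS_n$-invariance of the kernel to see that relabelling generators is an automorphism of $\Gamma$) identifies every corner $A^{\max} f_\tau$ with $\Cstar(\Gamma)$ via $u_{g_i} \mapsto q_i f_\tau$. Now the homomorphism $\Psi : \Cstar(\Gamma) \ot \cont(\rS_n) \ra A^{\max}$ determined by $u_{g_i} \mapsto q_i$, $p_{ij} \mapsto p_{ij}$ (well defined by the universal property of the full group \Cstar-algebra, nuclearity of $\cont(\rS_n)$, and centrality of the $p_{ij}$) sends the block $\Cstar(\Gamma) \ot e_\tau$ isomorphically onto $A^{\max} f_\tau$, hence is an isomorphism. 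Comparing comultiplications through Proposition \ref{prop:semi-direct-products} shows $\Psi$ is a morphism of compact matrix quantum groups carrying the fundamental corepresentation $(u_{g_i} p_{ij})$ to $u$, so $A^{\max} \cong \Cstar(\Gamma) \Join \cont(\rS_n)$.

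The main obstacle is the injectivity of $\Psi$, equivalently the fact that each corner carries the full group \Cstar-algebra of $\Gamma$ rather than a proper quotient. This is precisely where both hypotheses are indispensable: maximality of $A$ feeds into Proposition \ref{prop:diagonal-subgroups-of-maximal-versions} to guarantee the full $\Cstar(\Gamma)$ at the corner $f_e$, while homogeneity supplies the automorphisms $\alpha_\sigma$, which make all corners nonzero and mutually isomorphic and pin down the $\rS_n$-action. Without homogeneity the $f_\tau$ need not all survive and the corners need not match up, and the desired semi-direct product structure fails.
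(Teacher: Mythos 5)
Your architecture is genuinely different from the paper's (a central decomposition of $A^{\max}$ over the projections $f_\tau = \prod_i p_{\tau(i)i}$, transported by the translation automorphisms $\alpha_\sigma$), and several of its components are correct: the $f_\tau$ are indeed central, orthogonal, sum to $1$, and are permuted by the $\alpha_\sigma$; the off-diagonal ideal is exactly $(1-f_e)A^{\max}$; and $\Psi$ is well defined and surjective. But there is a genuine gap at precisely the point you call the main obstacle, and the ingredients you invoke do not close it. You define $\Gamma = \langle q_1, \dotsc, q_n \rangle$ as a concrete group of unitaries inside $A^{\max}$, whereas Proposition \ref{prop:diagonal-subgroups-of-maximal-versions} identifies the corner $A^{\max}f_e \cong A^{\max}/(u_{ij}=0,\ i \neq j)$ with the full group \Cstar-algebra of the \emph{diagonal subgroup} $\Gamma_{\mathrm{diag}}$, the group generated by the images $g_i$ of $u_{ii}$ \emph{in the quotient}. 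There is always a surjection $\Gamma \thra \Gamma_{\mathrm{diag}}$, $q_i \mapsto g_i$, but a priori it may have nontrivial kernel: a word with $w(g_1, \dotsc, g_n) = e$, i.e. $w(q_1, \dotsc, q_n)f_e = f_e$, need not satisfy $w(q_1, \dotsc, q_n) = 1$ in $A^{\max}$. Your sentence identifying the corner with ``$\Cstar(\Gamma)$, with $q_i f_e \leftrightarrow u_{g_i}$'' silently equates these two groups; read instead with $\Gamma = \Gamma_{\mathrm{diag}}$, the well-definedness of $\Psi$ via ``the universal property of the full group \Cstar-algebra'' requires a unitary representation of $\Gamma_{\mathrm{diag}}$ sending $g_i \mapsto q_i$, i.e. that every relation of the $g_i$ lifts to the $q_i$. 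Either way, this lifting statement is exactly the crux of the theorem — it is the step the paper calls ``A *-homomorphism $\Cstar(\Gamma) \ra A$'' and proves in equations (\ref{eq:product-contained-in-ideal})--(\ref{eq:tensor-product-equality}) — and your proposal assumes it rather than proves it. Injectivity of $\Psi$ on the block at $\tau = e$ is \emph{equivalent} to $\Gamma \cong \Gamma_{\mathrm{diag}}$, so the gap sits exactly under your key claim.

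The automorphisms $\alpha_\sigma$ cannot repair this. Since $\alpha_\sigma(q_i) = q_{\sigma^{-1}(i)}$ and $\alpha_\sigma(f_\tau) = f_{\sigma^{-1}\tau}$, applying $\alpha_\sigma$ to $w(q_1, \dotsc, q_n)f_e = f_e$ yields $w(q_{\sigma^{-1}(1)}, \dotsc, q_{\sigma^{-1}(n)})f_{\sigma^{-1}} = f_{\sigma^{-1}}$: the letters of the word are permuted in lockstep with the corner. What injectivity needs is the \emph{same} word at \emph{every} corner, $w(q)f_\tau = f_\tau$ for all $\tau$ (then summing over $\tau$ gives $w(q) = 1$); equivalently, that $\ker(\ZZ_2^{*n} \thra \Gamma_{\mathrm{diag}})$ is itself $\rS_n$-invariant. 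Your step 2 establishes invariance only for $\ker(\ZZ_2^{*n} \thra \langle q_i \rangle)$, which is by construction the largest $\rS_n$-invariant normal subgroup contained in $\ker(\ZZ_2^{*n} \thra \Gamma_{\mathrm{diag}})$ — without the lifting step you cannot exclude that it is strictly smaller. The paper gets the missing statement by using the full comultiplication rather than its one-dimensional slices $(\chi_\sigma \circ \psi \ot \id)\circ \Delta$: it applies $\Delta$ to the relation at the identity corner, cuts \emph{both} tensor legs with matching products of the projections $q_{ij}$ to propagate the identity to arbitrary column indices, and only then sums. Some argument of this strength must be supplied; with your tools alone one only obtains $A^{\max} \cong \bigoplus_\tau \Cstar(\Gamma_{\mathrm{diag}})$ with generator identifications twisted by $\tau$, which is weaker than the theorem.
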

\begin{proof}
  First note that if $(A,u)$ is a compact matrix quantum group such that $u_{ij}^2$ is a central projection in $A$ for all $i,j \in \{1, \dotsc, n\}$, then the elements $u_{ij}^2$ are central projections in $\mathrm{Pol}(A)$ and hence the same is true in the maximal version of $(A,u)$.  We may hence assume that $A$ is in its maximal version.

  \textbf{An embedding $\cont(S_n) \ra A$:}
  Since $u_{ij}$ is a self-adjoint partial isometry for all $i,j \in \{1, \dotsc, n\}$ and $u$ is unitary, it follows that $\sum_k u_{ik}^2 = 1 = \sum_k u_{kj}^2$ for all $i,j \in \{1, \dotsc, n\}$.  Since $q_{ij} = u_{ij}^2 \in A$, $i,j \in \{1, \dotsc, n\}$ is also a commuting family of projections, it satisfies the relations of the fundamental corepresentation of $\cont(\rS_n)$.  So there is a morphism $\cont(\rS_n) \ra A$ of compact quantum groups defined by $p_{ij} \mapsto q_{ij}$.  Here $p = (p_{ij})$ denotes the fundamental corepresentation of $\cont(\rS_n)$.  Since $A$ is homogeneous, there is a morphism $A \ra \cont(\rS_n)$ of compact matrix quantum groups, satisfying $u_{ij} \mapsto p_{ij}$.  This shows that $\cont(\rS_n) \ra A$ is an embedding.

  \textbf{Construction of $\Gamma$:}
  Let now $v_i = \sum_j u_{ij}$ for $i \in \{1, \dotsc, n\}$.  Then all $v_i$ are self-adjoint unitaries, since they are obviously self-adjoint and
  \begin{equation*}
    v_i^2 = \sum_{j,k} u_{ij}u_{ik} = \sum_j q_{ij} = 1
    \eqstop
  \end{equation*}
  Denote by $\Gamma$ the diagonal subgroup of $(A,u)$.  By Proposition \ref{prop:diagonal-subgroups-of-maximal-versions} the quotient ${A/(u_{ij} = 0 \text{ for all } i \neq j)}$ appears in its maximal version $\Cstar(\Gamma)$, as $A$ is in its maximal version.  Denote by $\pi:A \ra \Cstar(\Gamma)$ the natural quotient map.  Since $\pi(v_i) = \pi(u_{ii})$ and $v_i^2 = 1$, there is a quotient map $\ZZ_2^{*n} \ra \Gamma$ mapping the $i$-th generator of $\ZZ_2^{*n}$ to $\pi(v_i)$.
  
  \textbf{A *-homomorphism} $\Cstar(\Gamma) \ra A$\textbf{:}
  Denote by $g_i$ the natural generators of $\Gamma$, which satisfy $u_{g_i} = \pi(u_{ii}) = \pi(v_i)$ for all $i \in \{1, \dotsc, n\}$.  We show that there is a map $\Cstar(\Gamma) \ra A$ which maps $u_{g_i}$ to $v_i$.   By universality of $\Cstar(\Gamma)$, it suffices to show that the unitaries $v_i$, $i \in \{1, \dotsc, n\}$ satisfy the relations of $g_i$, $i \in \{1, \dotsc, n\}$.  So assume that $g_{i_1} \dotsm g_{i_l} = e$.  Then $\pi(v_{i_1} \dotsm v_{i_l}) = u_{g_{i_1}} \dotsm u_{g_{i_l}} = 1$.  Let $i_{l + 1}, \dotsc, i_{l'}$ be an enumeration of $\{1, \dotsc, n\} \setminus \{i_1, \dotsc, i_l\}$.  Note that $v_i^2 = 1$ implies $g_i^2 = e$.  So
  \begin{align*}
    \pi(u_{i_1 i_1} \dotsm \, u_{i_l i_l} q_{i_{l+1} i_{l+1}} \dotsm \, q_{i_{l'}i_{l'}}) = 1
  \end{align*}
  holds and implies 
  \begin{equation}
    \label{eq:product-contained-in-ideal}
    u_{i_1 i_1} \dotsm \, u_{i_l i_l} q_{i_{l+1} i_{l+1}} \dotsm \, q_{i_{l'}i_{l'}}
    \quad \in \quad 
    q_{i_1 i_1} \dotsm \, q_{i_l i_l} q_{i_{l+1} i_{l+1}} \dotsm \, q_{i_{l'}i_{l'}}
    +
    \langle q_{ij}, i \neq j \rangle
    \eqcomma
  \end{equation}
  where the last expression denotes the ideal in $A$ which is generated by all $q_{ij}$, $i \neq j$.  Note that indeed  $\langle q_{ij}, i \neq j \rangle = \langle u_{ij}, i \neq j \rangle$, since $v_i q_{ij} = u_{ij}$.  Next note that $\{i_1, \dotsc, i_{l'}\} = \{1, \dotsc, n\}$ implies 
  \begin{equation}
    \label{eq:product-zero}
    \langle q_{ij}, i \neq j \rangle
    \cdot
    q_{i_1 i_1} \dotsm \, q_{i_l i_l} q_{i_{l+1} i_{l+1}} \dotsm \, q_{i_{l'}i_{l'}}
    =
    0
    \eqcomma
  \end{equation}
  because $q_{ij} q_{ij'} = 0 = q_{ij} q_{i'j}$ for all $i \neq i'$ and $j \neq j'$.  Moreover,  since $q_{ij}$ is central in $A$, we have
  \begin{equation}
    \label{eq:product-absorbed}
    (u_{i_1 i_1} \dotsm \, u_{i_l i_l} q_{i_{l+1} i_{l+1}} \dotsm \, q_{i_{l'}i_{l'}})
    \cdot
    (q_{i_1 i_1} \dotsm \, q_{i_l i_l} q_{i_{l+1} i_{l+1}} \dotsm \, q_{i_{l'}i_{l'}})
    =
    u_{i_1 i_1} \dotsm \, u_{i_l i_l} q_{i_{l+1} i_{l+1}} \dotsm \, q_{i_{l'}i_{l'}}
    \eqstop
  \end{equation}
  Multiplying (\ref{eq:product-contained-in-ideal}) with $q_{i_1 i_1} \dotsm \, q_{i_l i_l} q_{i_{l+1} i_{l+1}} \dotsm \, q_{i_{l'}i_{l'}}$, and applying (\ref{eq:product-zero}) and (\ref{eq:product-absorbed}), we see that
  \begin{equation}
    \label{eq:equal-to-projection}
    u_{i_1 i_1} \dotsm \, u_{i_l i_l} q_{i_{l+1} i_{l+1}} \dotsm \, q_{i_{l'}i_{l'}}
    =
    q_{i_1 i_1} \dotsm \, q_{i_l i_l} q_{i_{l+1} i_{l+1}} \dotsm \, q_{i_{l'}i_{l'}}
    \eqstop
  \end{equation}
  Applying $\Delta$ to (\ref{eq:equal-to-projection}), we obtain
  \begin{multline*}
    \sum_{k_1, \dotsc, k_{l'}}
    u_{i_1 k_1} \dotsm \, u_{i_l k_l} q_{i_{l+1} k_{l+1}} \dotsm \, q_{i_{l'}k_{l'}} 
    \ot
    u_{k_1 i_1} \dotsm \, u_{k_l i_l} q_{k_{l+1} i_{l+1}} \dotsm \, q_{k_{l'}i_{l'}} \\
    =
    \sum_{k_1, \dotsc, k_{l'}}
    q_{i_1 k_1} \dotsm \, q_{i_l k_l} q_{i_{l+1} k_{l+1}} \dotsm \, q_{i_{l'}k_{l'}}
    \ot
    q_{k_1 i_1} \dotsm \, q_{k_l i_l} q_{k_{l+1} i_{l+1}} \dotsm \, q_{k_{l'}i_{l'}}
  \end{multline*}
  Multiplying this equation on both sides with $q_{i_1 k_1} \dotsm \, q_{i_l k_l} q_{i_{l+1} k_{l+1}} \dotsm \, q_{i_{l'}k_{l'}} \ot q_{k_1 i_1} \dotsm \, q_{k_l i_l} q_{k_{l+1} i_{l+1}} \dotsm \, q_{k_{l'}i_{l'}}$ for a fixed index $k_1, \dotsc, k_{l'}$, we obtain that for all such indexes
  \begin{multline}
    \label{eq:tensor-product-equality}
    u_{i_1 k_1} \dotsm \, u_{i_l k_l} q_{i_{l+1} k_{l+1}} \dotsm \, q_{i_{l'}k_{l'}} 
    \ot
    u_{k_1 i_1} \dotsm \, u_{k_l i_l} q_{k_{l+1} i_{l+1}} \dotsm \, q_{k_{l'}i_{l'}} \\
    =
    q_{i_1 k_1} \dotsm \, q_{i_l k_l} q_{i_{l+1} k_{l+1}} \dotsm \, q_{i_{l'}k_{l'}}
    \ot
    q_{k_1 i_1} \dotsm \, q_{k_l i_l} q_{k_{l+1} i_{l+1}} \dotsm \, q_{k_{l'}i_{l'}}
  \end{multline}
  holds.  This implies that
  \begin{equation*}
    u_{i_1 k_1} \dotsm \, u_{i_l k_l} q_{i_{l+1} k_{l+1}} \dotsm \, q_{i_{l'}k_{l'}}
    =
    q_{i_1 k_1} \dotsm \, q_{i_l k_l} q_{i_{l+1} k_{l+1}} \dotsm \, q_{i_{l'}k_{l'}}
  \end{equation*}
  for all $k_1, \dotsc, k_{l'} \in \{1, \dotsc, n\}$.  Summing over all these indexes and using $\sum_k q_{ik} = 1$ for all $i$, we obtain $v_{i_1} \dotsm \, v_{i_l} = 1$.  It follows that there is a *-homomorphism $\Cstar(\Gamma) \ra A$ sending $u_{g_i}$ to $v_i$ for all $\{1, \dotsc, n\}$.

  \textbf{The $\rS_n$-action on $\Gamma$:}
  Since $g_i^2 = e$, there is a quotient map $\ZZ_2^{*n} \ra \Gamma$ mapping the $i$-th natural generator of $\ZZ_2^{*n}$ to $g_i$.  Let us show that the kernel of this map is $\rS_n$-invariant.  Take any permutation $\sigma \in \rS_n$ and denote by $\chi_\sigma: \cont(\rS_n) \ra \CC$ the associated evaluation map.  We have
  \begin{equation*}
    \Delta(v_i) = \Delta(\sum_j u_{ij}) = \sum_{k,j} u_{ik} \ot u_{kj} = \sum_k u_{ik} \ot v_k
    \eqstop
  \end{equation*}
  So using the quotient map $\psi:A \ra \cont(\rS_n): u_{ij} \ra p_{ij}$, we find
  \begin{equation*}
    (\chi_\sigma \circ \psi \ot \id)(\Delta(v_i)) = \sum_k \chi_\sigma( p_{ik}) \ot v_k = v_{\sigma^{-1}(i)}
    \eqstop
  \end{equation*}
  Assume that $g_{i_1} \dotsm g_{i_l} = e$.  Then $v_{i_1} \dotsm v_{i_l} = 1$ and hence $\Delta(v_{i_1} \dotsm v_{i_l}) = 1 \ot 1$.  We obtain
  \begin{equation*}
    1
    =
    (\chi_{\sigma^{-1}} \circ \psi \ot \id)(\Delta(v_{i_1} \dotsm v_{i_l}))
    =
    v_{\sigma(i_1)} \dotsm v_{\sigma(i_l)}
    \eqstop
  \end{equation*}
  This implies that $g_{\sigma(i_1)} \dotsm \, g_{\sigma(i_l)} = e$.  We have shown that the kernel of $\ZZ_2^{*n} \ra \Gamma$ is invariant under the natural action of $\rS_n$.  So $\rS_n$ acts on $\Gamma$ by permuting its generators.

  \textbf{End of the proof:}
  Since $\rS_n$ acts on $\Gamma$, we can hence consider $\Cstar(\Gamma) \ot \cont(\rS_n)$ with the fundamental corepresentation matrix $w = (u_{g_i}p_{ij}) \in \rM_n(\Cstar(\Gamma) \ot \cont(\rS_n))$ as described in Section~\ref{sec:semi-direct-products}.  Now consider the map $\rho: \Cstar(\Gamma) \ot \cont(\rS_n) \ra A$ defined by $\rho(u_{g_i}) = v_i$ and $\rho(p_{ij}) = q_{ij}$.  Then $\rho(u_{g_i}p_{ij}) = u_{ij}$ saying that $\rho$ is a morphism of CMQGs.  We prove that $(\pi \ot \psi) \circ \Delta: A \ra \Cstar(\Gamma) \ot \cont(\rS_n)$ is the inverse of $\rho$.  Indeed, it suffices to note that
  \begin{equation*}
    (\pi \ot \psi) \circ \Delta (u_{ij})
    =
    (\pi \ot \psi)(\sum_k u_{ik} \ot u_{kj})
    =
    \sum_k u_{g_i} \delta_{i,k} \ot p_{kj}
    =
    u_{g_i} \ot p_{ij}
    \eqstop
  \end{equation*}
  We proved that $A \cong \Cstar(\Gamma) \Join \cont(\rS_n)$ as CMQGs, which concludes the proof. 
\end{proof}

Before we end this section, let us mention the following proposition showing the particular relevance of diagonal subgroups for CMQGs $(A,u)$ for which $u_{ij}^2$ are central projections in $A$.  This makes the assumptions of Theorem \ref{thm:classification-of-qsubgroups} very natural.
\begin{proposition}
  \label{prop:diagonal-subgroups-restriction-to-group-theoretical-case}
  Let $(A,u)$ be an orthogonal CMQG and let $(B, v)$ be the quotient of $A$ by the relations
  \begin{equation*}
    u_{ij}^2 \text{ is a central projection for all } i,j
  \end{equation*}
  Then $\mathrm{diag}(A,u) = \mathrm{diag}(B,v)$.
\end{proposition}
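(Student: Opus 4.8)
The plan is to present the two diagonal subgroups as the groups defined by exactly the same relations among their canonical generators, by building mutually inverse maps on the C*-algebras obtained after killing off-diagonal entries. Write $q \colon A \thra B$ for the defining quotient, so $v_{ij} = q(u_{ij})$, and following Definition \ref{def:diagonal-subgroup} let $\pi_A \colon A \thra A_0$ and $\pi_B \colon B \thra B_0$ be the quotients by the relations $u_{ij} = 0$ (resp. $v_{ij} = 0$) for $i \neq j$. Set $g_i = \pi_A(u_{ii})$ and $h_i = \pi_B(v_{ii})$, so that $\diag(A,u) = \langle g_1, \dotsc, g_n \rangle$ and $\diag(B,v) = \langle h_1, \dotsc, h_n \rangle$. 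It suffices to show that a word $g_{i_1} \dotsm g_{i_l}$ equals $e$ in $\diag(A,u)$ if and only if $h_{i_1} \dotsm h_{i_l}$ equals $e$ in $\diag(B,v)$.

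First I would handle the easy direction. Since $q$ is surjective with $q(u_{ij}) = v_{ij}$, the composite $\pi_B \circ q$ kills every $u_{ij}$ with $i \neq j$, and hence factors as $\bar q \circ \pi_A$ for a *-homomorphism $\bar q \colon A_0 \to B_0$ sending $g_i \mapsto h_i$. This already gives the implication from relations in $\diag(A,u)$ to relations in $\diag(B,v)$.

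The key step is the reverse direction, and this is where I expect the one genuine subtlety to lie. I claim the relations defining $B$ are automatically satisfied in $A_0$. For $i \neq j$ we have $u_{ij} = 0$ in $A_0$, so $u_{ij}^2 = 0$ is a central projection; for $i = j$, self-adjointness of the entries together with unitarity of $u$ gives $\sum_k u_{ik}^2 = 1$ in $A$ (exactly as in the proof of Theorem \ref{thm:classification-of-qsubgroups}), which collapses in $A_0$ to $u_{ii}^2 = 1$, again a central projection. By universality of the quotient $B$, the map $\pi_A$ therefore factors as $\psi \circ q$ for a *-homomorphism $\psi \colon B \to A_0$ with $\psi(v_{ij}) = \pi_A(u_{ij})$. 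Since $\psi(v_{ij}) = 0$ for $i \neq j$, this $\psi$ in turn factors through $\pi_B$, yielding $\bar\psi \colon B_0 \to A_0$ with $\bar\psi(h_i) = g_i$.

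Finally I would observe that $\bar q$ and $\bar\psi$ restrict to the identity on the respective generators after composition in either order, hence are mutually inverse; applying them to a product of generators then shows $g_{i_1} \dotsm g_{i_l} = e$ in $\diag(A,u)$ if and only if $h_{i_1} \dotsm h_{i_l} = e$ in $\diag(B,v)$, so the two groups coincide together with their distinguished generating sets. The main obstacle is the claim in the third paragraph: one must notice that it is precisely orthogonality that forces $u_{ii}^2 = 1$ in $A_0$, since the diagonal entries $u_{ii}$ generate a possibly non-commutative algebra and $u_{ii}^2$ has no reason to be central in $A_0$ for purely algebraic reasons. Once this is observed, imposing centrality of the squares changes nothing that the diagonal subgroup can detect.
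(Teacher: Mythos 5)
Your proof is correct and follows essentially the same route as the paper's: the crux in both is that orthogonality forces $\pi_A(u_{ii})^2 = 1$ (and trivially $\pi_A(u_{ij})^2 = 0$ for $i \neq j$) in the algebra obtained by killing off-diagonal entries, so the centrality relations defining $B$ become vacuous there. The paper compresses your two factorization arguments into the one-line assertion $\Cstar(\Gamma_B) = \Cstar(\Gamma_A)/\{u_{g_i}^2 \text{ is a central projection}\}$, which is exactly the identification your mutually inverse maps $\bar q$ and $\bar\psi$ establish.
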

\begin{proof}
  Denote by $\Gamma_A$ the diagonal subgroup of $A$ and by $g_i$, $i \in \{1, \dotsc, n\}$ its generators.  Then the diagonal subgroup of $(B,u)$ is described by its \Cstar-algebra via $\Cstar(\Gamma_B) = \Cstar(\Gamma_A)/ \{u_{g_i}^2 \text{ is a central projection}\}$.  Since $(A, u)$ is orthogonal, the generators of its diagonal subgroup satisfy $g_i^2 = e$ for all $i \in \{1, \dotsc, n\}$.  It follows that $u_{g_i}^2 = 1$ and hence $\Cstar(\Gamma_B) = \Cstar(\Gamma_A)$, which finishes the proof.
\end{proof}

\section{Easy quantum subgroups of $\Cstar(\ZZ_2^{*n}) \Join \cont(\rS_n)$}
\label{sec:classification-partition-qgs}

Recall from the preliminaries that the easy quantum group $A = \cont(H_n^{[\infty]})$ associated with the category $\langle \primarypart \rangle$ is the universal \Cstar-algebra generated by the entries $u_{ij}$, $i,j \in \{1, \dotsc, n\}$ of its fundamental corepresentation subject to the relations that $u_{ij}^2$ are central projections in $A$ for all $i,j \in \{1, \dotsc, n\}$.  Since all easy quantum groups are homogeneous, Theorem \ref{thm:classification-of-qsubgroups} implies that $\cont(H_n^{[\infty]}) \cong \Cstar(\ZZ_2^{*n}) \Join \cont(\rS_n)$.  In this section we achieve a complete classification of easy quantum subgroups of $\Cstar(\ZZ_2^{*n}) \Join \cont(\rS_n)$.  In \cite{raumweber13-combinatorial-approach} we present a completely combinatorial proof for the classification of easy quantum subgroups of $\cont(H_n^{[\infty]})$, but it fails to give a description of the quantum groups.  The basic ideas behind \cite{raumweber13-combinatorial-approach} and the present section are similar, but we make use of Theorem \ref{thm:classification-of-qsubgroups} in order to simplify combinatorial considerations.

Theorem \ref{thm:classification-of-qsubgroups} tells us that we need to investigate diagonal subgroups of easy quantum groups in order to describe them completely.

\subsection{Diagonal subgroups of easy quantum groups}
\label{sec:diagonal-subgroups-of-easy-quantum-groups}

Recall that the diagonal subgroup of $\cont(O_n^+)$ is $\ZZ_2^{*n}$ together with its natural generators.  Hence, all diagonal subgroups of quantum subgroups $\cont(O_n^+) \thra (A,u)$ are quotients of $\ZZ_2^{*n}$.  We want to describe these quotients for easy quantum groups.

Let $a_1, a_2, \dotsc $be the natural generators of $\ZZ_2^{*\infty}$.  Given a partition $p \in \cP(k)$ we say that a labelling $(i_1, \dotsc, i_k)$ of $p$ is compatible, if every block of $p$ is labelled by exactly one index  -- equivalently $\delta_p(i) = 1$.  Take $p \in \cP(k)$ and a compatible labelling $(i_1, \dotsc, i_k)$ of $p$ by indexes in $\NN$, we denote by $w(p,i)$ the word in $\ZZ_2^{*\infty}$ which arises by labelling $p$ with the letters $a_{i_1}, a_{i_2}, \dotsc, a_{i_k}$ from left to right.  We write $\underline{n} = \{1, \dotsc, n\}$ for $n \in \NN^\times$ and $\underline{n} = \NN^\times$ for $n = \infty$.  If $\cC$ is a category of partitions we write $\cC(k) = \cC(k,0)$ for the partitions of length $k$ without lower points.  For $n \in \NN \cup \{\infty\}$, we write
\begin{equation*}
  F_n(\cC) = \{w(p,i) \amid k \in \NN, p \in \cC(k), i \in \underline{n}^k \text{ compatible}\}
\end{equation*}
for the set of all possible words  in $\ZZ_2^{*n}$ arising from $\cC$.  The next lemma shows that $F_n(\cC)$ is always a normal subgroup of $\ZZ_2^{*n}$.  
\begin{lemma}
  \label{lem:FC-is-a-group}
  Let $\cC$ be a category of partitions and $n \in \NN \cup \{\infty\}$.  Then $F_n(\cC)$ is a normal subgroup of $\ZZ_2^{*n}$.
\end{lemma}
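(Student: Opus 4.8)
The plan is to show that $F_n(\cC)$ contains the neutral element, is closed under multiplication and inversion, and is stable under conjugation by the generators $a_1, a_2, \dotsc$ of $\ZZ_2^{*n}$. The guiding idea is that each of these group-theoretic properties is produced by exactly one of the closure operations of a category of partitions: the pair partition gives the unit, the tensor product gives multiplicativity, rotation together with the involution gives inverses, and rotation together with the pair partition gives normality. Throughout, the only subtlety is that a \emph{compatible} labelling requires merely that the label be constant on each block (so $\delta_p(i)=1$), while distinct blocks may freely share labels; this is what makes the constructions below legal.

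First the easy two properties. For the unit, take the pair partition $\sqcap$ (which, up to rotation, we may take with both points on the upper row): compatibility forces both points to carry the same letter $a_i$, and $w(\sqcap, i) = a_i a_i = e$, so $e \in F_n(\cC)$. For products, given $w(p,i)$ and $w(q,j)$ with $p \in \cC(k)$ and $q \in \cC(m)$, the tensor product $p \ot q \in \cC(k+m)$ is the juxtaposition of $p$ and $q$, so its blocks are exactly the blocks of $p$ together with those of $q$; hence the concatenated labelling $(i,j)$ is compatible, and reading letters left to right yields $w(p\ot q, (i,j)) = w(p,i)\, w(q,j)$. Thus $F_n(\cC)$ is closed under multiplication.

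Next, inversion. I would prove that the horizontal reflection $\tilde p$ of $p \in \cC(k)$ -- reverse the left-to-right order of the points, keeping the blocks -- again lies in $\cC$. Rotating all $k$ upper points of $p$ down to the lower row one at a time produces a partition in $\cP(0,k)$ whose lower points appear in reversed order $k, k-1, \dotsc, 1$, and the involution then flips these back up; both operations preserve membership in $\cC$, so $\tilde p \in \cC$. Labelling $\tilde p$ so that the reflected point carries the same letter as its preimage, one gets a compatible labelling with $w(\tilde p, \tilde\imath) = a_{i_k} \dotsm a_{i_1} = w(p,i)^{-1}$, using that each $a_{i_j}$ is an involution. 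Hence $F_n(\cC)$ is closed under inversion and is a subgroup.

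Finally, normality. Since $\ZZ_2^{*n}$ is generated by the $a_m$ and conjugation by a product is the composite of conjugations by its letters, it suffices to show $a_m\, w(p,i)\, a_m \in F_n(\cC)$ for every generator $a_m$; note $a_m$ itself need not lie in $F_n(\cC)$, so this genuinely requires a construction. Form $\sqcap \ot p \in \cC(k+2)$ and cyclically rotate its leftmost point past all the others into the rightmost position, a composition of basic rotations; the resulting $p' \in \cC$ has an outer block joining its first and last points with the blocks of $p$ nested in between. Labelling this outer block by $a_m$ and the inner points by $i$ is compatible and gives $w(p', i') = a_m\, a_{i_1}\dotsm a_{i_k}\, a_m = a_m\, w(p,i)\, a_m$. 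Iterating over the letters of an arbitrary $g$ then yields $g\, F_n(\cC)\, g^{-1} \subseteq F_n(\cC)$, so $F_n(\cC)$ is normal. I expect the main obstacle to be precisely these two diagram manipulations -- the reflection and the wrapping-by-a-pair -- where one must both justify that the target partition stays in $\cC$ using only rotation, involution and the pair partition, and carefully track which point carries which label through the rotations; each individual step is elementary, but the bookkeeping is where errors would creep in.
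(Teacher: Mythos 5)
Your proof is correct and takes essentially the same route as the paper's: the tensor product gives closure under multiplication, the involution (transported back to one row by rotations) gives inverses, and wrapping a word with a pair block via cyclic rotation gives invariance under conjugation by the generators $a_m$, which suffices for normality of $F_n(\cC) \leq \ZZ_2^{*n}$. The only differences are cosmetic: you check the unit explicitly and spell out the rotation bookkeeping (the order reversal when moving a row of points down and back up) that the paper's proof leaves implicit.
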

\begin{proof}
  We first show that $F_n(\cC)$ is closed under products.  Let $p \in \cC(k),p' \in \cC(k')$ and let $i$,$i'$ be compatible labellings with indexes from $\underline{n}$ of length $k$ and $k'$, respectively.  Then
  \begin{equation*}
    w(p,i)w(p',i') = w(p \ot p', (i_1, \dotsc, i_k, i'_1, \dotsc, i'_{k'}))
    \eqstop
  \end{equation*}
  Next, observe that for $p \in \cC(k)$ with compatible labelling $i$, the inverse of $w(p,i)$ is given by
  \begin{equation*}
    w(p,i)^{-1} = w(p^*, (i_k,i_{k-1}, \dotsc, i_1))
    \eqstop
  \end{equation*}
  It remains to show that $F_n(\cC)$ is normal in $\ZZ_2^{*n}$.  Take $p$ and $i$ as before.  Choose some $i_0 \in \{1, \dotsc, n\}$.  Then
  \begin{equation*}
    \Ad(a_{i_0})(w(p, i)) = w(p',(i_0,i_1, \dotsc, i_k, i_0))
    \eqcomma
  \end{equation*}
  where $p'$ is the partition arising from $p$ by rotating the uttermost right leg of $p \ot \sqcup$ to the left.  This finishes the proof.
\end{proof}

We can now give a description of the diagonal subgroup of an arbitrary easy quantum group.
\begin{lemma}
  \label{lem:presentation-of-diagonal-subgroups}
  Let $\cC$ be a category of partitions and let $G_\cC(n)$ be the easy quantum group associated with $\cC$ whose fundamental corepresentation matrix has size $n \times n$.   Let $\Gamma$ be the diagonal subgroup of $G_\cC(n)$, whose natural generators we denote by $g_1, \dotsc, g_n$.  Then $\Gamma = (\ZZ_2^{*n}) / F_n(\cC)$, where the natural generators of $\ZZ_2^{*n}$ map to the generators $g_1, \dotsc, g_n$.
\end{lemma}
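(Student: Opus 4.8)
The plan is to combine the Tannaka--Krein presentation of $A := \cont(G_\cC(n))$ with a direct analysis of the quotient that defines the diagonal subgroup. Since $G_\cC(n)$ is a quantum subgroup of $O_n^+$, whose diagonal subgroup is $\ZZ_2^{*n}$ with its natural generators, the group $\Gamma$ is automatically a quotient of $\ZZ_2^{*n}$ via $a_i \mapsto g_i$; so everything reduces to identifying the kernel of this map with $F_n(\cC)$. By Tannaka--Krein duality, used exactly as in the proof of Proposition~\ref{prop:description-hyperoctahedral-quantum-groups}, $A$ is the universal \Cstar-algebra generated by self-adjoint entries $u_{ij}$ subject to the relations $T_p u^{\ot k} = u^{\ot l} T_p$ for all $p \in \cC(k,l)$. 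The diagonal subgroup is read off from the quotient $\pi \colon A \ra B := A/\langle u_{ij} : i \neq j \rangle$, where $B \cong \Cstar(\Gamma)$ by Proposition~\ref{prop:diagonal-subgroups-of-maximal-versions} and $g_i = \pi(u_{ii})$.

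First I would push the defining relations of $A$ through $\pi$. Writing out $T_p u^{\ot k} = u^{\ot l} T_p$ entrywise and applying $\pi$ annihilates every monomial containing an off-diagonal factor, so that only diagonal multi-indices survive. A short computation shows that the relation attached to $p \in \cC(k,l)$ becomes, for each pair $(c,b) \in \underline{n}^k \times \underline{n}^l$, the identity $\delta_p(c,b)\, g_{c_1} \dotsm g_{c_k} = \delta_p(c,b)\, g_{b_1} \dotsm g_{b_l}$. Hence the relations imposed on the involutive generators $g_i$ are exactly $g_{c_1} \dotsm g_{c_k} = g_{b_1} \dotsm g_{b_l}$, ranging over $p \in \cC(k,l)$ and compatible labellings $(c,b)$ (i.e. $\delta_p(c,b) = 1$), the identities $g_i^2 = e$ being automatic as we present $\Gamma$ as a quotient of $\ZZ_2^{*n}$. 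Consequently $\Gamma = \ZZ_2^{*n}/N$, where $N$ is the normal subgroup of $\ZZ_2^{*n}$ generated by all words $a_{c_1} \dotsm a_{c_k} (a_{b_1} \dotsm a_{b_l})^{-1}$ with $p$ and $(c,b)$ as above.

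It then remains to prove $N = F_n(\cC)$. The labellings with $l = 0$ produce precisely the generators $w(p,i)$ of $F_n(\cC)$, giving $F_n(\cC) \subseteq N$. For the reverse inclusion I would invoke closure of $\cC$ under rotation: rotating all $l$ lower legs of $p \in \cC(k,l)$ up to the right-hand end of the upper row yields a partition $\hat p \in \cC(k+l)$ that carries over the block structure of $p$, so that a compatible labelling $(c,b)$ of $p$ becomes the compatible labelling $(c_1, \dotsc, c_k, b_l, \dotsc, b_1)$ of $\hat p$. Reading off the associated word and using that the generators of $\ZZ_2^{*n}$ are involutions gives $w(\hat p, (c_1, \dotsc, c_k, b_l, \dotsc, b_1)) = a_{c_1} \dotsm a_{c_k} a_{b_l} \dotsm a_{b_1} = a_{c_1} \dotsm a_{c_k} (a_{b_1} \dotsm a_{b_l})^{-1}$, which lies in $F_n(\cC)$. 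Since $F_n(\cC)$ is a normal subgroup by Lemma~\ref{lem:FC-is-a-group}, this yields $N \subseteq F_n(\cC)$, whence $N = F_n(\cC)$ and $\Gamma = \ZZ_2^{*n}/F_n(\cC)$ with $a_i \mapsto g_i$, as claimed.

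The main obstacle is the bookkeeping in the last two steps. One must verify that the quotient of the universal presentation of $A$ by the off-diagonal ideal is genuinely the group \Cstar-algebra of $\ZZ_2^{*n}/N$, so that no spurious relation among the $g_i$ is introduced and none is missed; and one must get the rotation dictionary right, in particular the order-reversal of the lower labels when folding $p$ into $\hat p$. Both points are routine once the entrywise form of $T_p u^{\ot k} = u^{\ot l} T_p$ is written down explicitly, but this is where all the care is required; the two inclusions $F_n(\cC) \subseteq N$ and $N \subseteq F_n(\cC)$ are then immediate.
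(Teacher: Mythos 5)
Your proof is correct and follows essentially the same route as the paper's: both rest on the Tannaka--Krein universal presentation of $\cont(G_\cC(n))$, the substitution $u_{ij} \mapsto \delta_{ij}u_{g_i}$ justified by Proposition~\ref{prop:diagonal-subgroups-of-maximal-versions}, and closure of $\cC$ under rotation. The only difference is organizational: the paper invokes rotation \emph{before} diagonalizing, replacing the defining relations by the one-row relations $T_p u^{\ot k} = T_p$ for $p \in \cC(k)$ so that the words of $F_n(\cC)$ appear immediately, whereas you diagonalize first for general $p \in \cC(k,l)$ and then perform the rotation at the level of words in $\ZZ_2^{*n}$ (correctly handling the order reversal of the lower labels, and using normality of $F_n(\cC)$ from Lemma~\ref{lem:FC-is-a-group} to get $N \subseteq F_n(\cC)$).
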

\begin{proof}
  First note that by construction $\cont(G_\cC(n))$ is the universal \Cstar-algebra generated by the entries of a matrix  $(u_{ij})_{1 \leq i,j \leq n}$ satisfying $T_p u^{\ot k} = u^{\ot l} T_p$ for all $p \in \cC(k,l)$.  Since rotating partitions is implemented by repeated composition with $| \ot \dotsm \ot | \ot \sqcup$, one can equivalently describe the relations of $(u_{ij})$ by
$T_p u^{\ot k} = T_p$ for all $p \in \cC(k)$.  Writing this out, we obtain the relations
\begin{equation*}
  \sum_{i_1, \dotsc, i_k} \delta_p(i) u_{i_1j_1}\dotsm u_{i_kj_k}
  =
  \delta_p(j)
  \eqcomma
\end{equation*}
for all partitions $p \in \cC(k)$ and all indexes $j = (j_1, \dotsc, j_k)$.  Passing to the diagonal subgroup of $\Gamma$ of $\cont(G_\cC(n))$ replaces $u_{ij}$ by $\delta_{ij} u_{g_i}$ in the previous relations, where $g_1, \dotsc, g_n$ denote the natural generators of $G$.  Using Proposition \ref{prop:diagonal-subgroups-of-maximal-versions}, we see that $\Cstar(\Gamma)$ is the universal \Cstar-algebra whose generating elements $u_{g_1}, \dotsc, u_{g_n}$ satisfy
\begin{equation*}
  \delta_p(i) u_{g_{i_1}}\dotsm \, u_{g_{i_k}}
  =
  \delta_p(i)
  \eqcomma
\end{equation*}
for all $p \in \cC(k)$ and all indexes $i_1, \dotsc, i_k$.  So $\Gamma$ is the universal group generated by elements $g_1, \dotsc, g_n$ which satisfy $e = g_{i_1} \dotsm g_{i_k}$ for all partitions $p \in \cC(k)$ and all compatible labellings $i = (i_1, \dotsc, i_k)$.   Note that in particular $g_i^2 = e$ for all $i \in \{1, \dotsc, n\}$, since $\sqcup \in \cC$.  Put differently, $\Gamma$ is a quotient of $\ZZ_2^{*n}$ whose relations are exactly given by elements of $F_n(\cC)$.  This finishes the proof.
\end{proof}

Next we precisely describe the possible diagonal subgroups of easy quantum groups.  The key notion for the subsequent classification is strong symmetry of subgroups of $\ZZ_2^{*n}$.
\begin{definition}
  \label{def:strong-symmetric-semigroup}
  Let $n \in \NN \cup \{\infty\}$.  We define the strong symmetric semigroup $\mathrm{sS}_n$ as the semigroup of $\End(\ZZ_2^{*n})$ consisting of identifications of letters.  It contains precisely the endomorphisms $\sigma_\phi:a_k \mapsto a_{\phi(k)}$, $k \in \underline{n}$, where $\phi:\underline{n} \ra \underline{n}$ is any map.

  A \emph{strongly symmetric reflection group} $\Gamma$ is the quotient $\ZZ_2^{*n} \ra \Gamma$ by an $\mathrm{sS}_n$-invariant, normal subgroup together with its natural generators, which are the images of $a_i$ in $\Gamma$.
\end{definition}

The next theorem contains all necessary combinatorial considerations, in order to deduce our main results about easy quantum groups.
\begin{theorem}
  \label{thm:range-of-F}
  For all categories of partitions and all $n \in \NN \cup \{\infty\}$, the subgroup $F_n(\cC) \leq \ZZ_2^{*n}$ is $\mathrm{sS}_n$-invariant.  Vice versa, for every $n \in \NN \cup \{\infty\}$ and every $\mathrm{sS}_n$-invariant, normal subgroup $N \leq \ZZ_2^{*n}$ there is a category of partitions $\cC$ such that $F_n(\cC) = N$.
\end{theorem}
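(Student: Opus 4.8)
The statement splits into a forward and a converse direction; the forward one is immediate from the definitions, so the real content lies in the construction needed for the converse.

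For the forward direction, the plan is to observe that an element $\sigma_\phi \in \mathrm{sS}_n$ acts on a word simply by post-composing the labelling with $\phi$, that is $\sigma_\phi(w(p,i)) = w(p, \phi \circ i)$. Since a labelling with $\delta_p(i)=1$ stays compatible after replacing $i$ by $\phi\circ i$ (indices agreeing on a block still agree), the word $w(p,\phi\circ i)$ again lies in $F_n(\cC)$. As $F_n(\cC)$ is a subgroup by Lemma~\ref{lem:FC-is-a-group} and $\sigma_\phi$ is an endomorphism of $\ZZ_2^{*n}$ carrying the generating set $\{w(p,i)\}$ of $F_n(\cC)$ into $F_n(\cC)$, this yields $\sigma_\phi(F_n(\cC)) \subseteq F_n(\cC)$.

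For the converse, let $N \leq \ZZ_2^{*n}$ be a normal, $\mathrm{sS}_n$-invariant subgroup. First I would extend the word map to arbitrary $p \in \cP(k,l)$ by reading the upper row from left to right and multiplying on the right by the inverse of the lower row, i.e. $w(p,i) = a_{i_1}\cdots a_{i_k}(a_{j_1}\cdots a_{j_l})^{-1}$ for upper labels $i$ and lower labels $j$; for $l=0$ this recovers the original definition. I then set
\[
  \cC(k,l) = \{\, p \in \cP(k,l) : w(p,i) \in N \text{ for every compatible labelling } i\,\}.
\]
The identity $F_n(\cC)=N$ is then quick. The inclusion $\subseteq$ holds by construction. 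For $\supseteq$, given $w\in N$ written as a word $a_{j_1}\cdots a_{j_k}$, I put $p=\ker(j)$; then $j$ is injective on the blocks of $p$, every compatible labelling of $p$ is an identification of letters applied to $j$, and hence lies in $N$ by $\mathrm{sS}_n$-invariance, so $p\in\cC(k,0)$ and $w=w(p,j)\in F_n(\cC)$.

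It remains to verify that $\cC$ is a category of partitions. The pair partition has word $a_ja_j=e\in N$, so $\sqcap\in\cC$. Involution and rotation are governed by the identities $w(p^*,\cdot)=w(p,\cdot)^{-1}$ and, for a basic rotation, $w(\cdot)\mapsto\Ad(a_{i_1})\big(w(\cdot)\big)$, which preserve membership in $N$ because $N$ is a subgroup, respectively a normal one; the tensor product is handled by $w(p\otimes q,\cdot)=\Ad(a_{i_1}\cdots a_{i_k})\big(w(q,\cdot)\big)\cdot w(p,\cdot)$, where $a_{i_1}\cdots a_{i_k}$ is the upper word of $p$, again using normality. The step I expect to be the \textbf{main obstacle} is closure under composition. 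Here, given $p\in\cC(k,l)$, $q\in\cC(l,m)$ and a compatible labelling $(\alpha,\delta)$ of $qp$, the idea is to lift it to matched labellings by colouring the $l$ glued middle points: each block of $p$ or of $q$ sits inside a single connected component of the stacked diagram, so components reaching the outer boundary inherit a forced colour from $(\alpha,\delta)$, while the purely internal components (the loops that get deleted) may be coloured by any letter. Choosing the middle labels $\mu$ this way makes $(\alpha,\mu)$ compatible for $p$ and $(\mu,\delta)$ compatible for $q$, and the shared middle word cancels, giving
\[
  w(qp,(\alpha,\delta)) = w(p,(\alpha,\mu))\cdot w(q,(\mu,\delta)) \in N.
\]
The delicate point is precisely to check that such a consistent middle colouring always exists on every block and that the deleted loops cancel cleanly; once this is in place, $\cC$ is closed under all four operations and $F_n(\cC)=N$, completing the proof.
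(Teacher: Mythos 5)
Your proof is correct, and while your forward direction coincides with the paper's (both rest on the observation $\sigma_\phi(w(p,i)) = w(p,\phi_*(i))$ together with preservation of compatibility), your converse takes a genuinely different route. The paper defines $\cC$ existentially and on one row: it consists of all rotations of partitions $\ker(i)$ with $a_{i_1}\dotsm a_{i_k}\in N$, so that distinct blocks carry distinct letters; closure under tensor products is then obtained by permuting the letters of one word so that the two words use disjoint sets of letters (via $\rS_n$-invariance of $N$), and closure under composition is reduced to composing with partitions $|\ot\dotsm\ot|\ot\sqcap\ot|\ot\dotsm\ot|$, which uses $\mathrm{sS}_n$-invariance (identification of letters). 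You instead define $\cC(k,l)$ universally on two rows -- \emph{every} compatible labelling must produce a word in $N$ -- which turns each closure property into a word identity needing only that $N$ is a normal subgroup: involution is inversion, basic rotation is conjugation, tensor product is your identity $w(p\ot q) = \Ad(a_{i_1}\dotsm a_{i_k})(w(q))\cdot w(p)$, and composition is lifting plus cancellation; $\mathrm{sS}_n$-invariance enters only in proving $N\subseteq F_n(\cC)$. What the paper's definition buys is that $F_n(\cC)=N$ can be read off immediately from the explicit description of $\cC(k)$. What your definition buys is uniformity in $n$: the paper's disjointness step needs the two words to jointly use at most $n$ letters and genuinely breaks down for finite $n$ (for $n=2$ and $N=\langle (a_1a_2)^2\rangle$, the tensor square of $\ker(1,2,1,2)$ has four blocks and so is not of the form $\ker(j)$ for any word $j$ in two letters, hence the paper's $\cC$ is not closed under $\ot$ as defined), whereas your $\Ad$-identity and universal definition work for every $n$, since a compatible labelling of $p\ot q$ simply restricts to compatible labellings of $p$ and of $q$. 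Finally, the point you flagged as delicate in the composition step is unproblematic: colour each of the $l$ middle points by its connected component of the stacked diagram, the colour being forced by $(\alpha,\delta)$ on components meeting the outer rows and arbitrary on the purely internal loops; every block of $p$ and of $q$ lies inside a single component, so $(\alpha,\mu)$ and $(\mu,\delta)$ are compatible, and the middle words cancel algebraically regardless of the loops, which never enter any of the three words. So your argument is complete as sketched.
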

\begin{proof}
  Let $\cC$ be a category of partitions and $n \in \NN \cup \{\infty\}$.  We prove that $F_n(\cC)$ is $\mathrm{sS}_n$-invariant.  So let $p \in \cC(k)$ and $i$ be a compatible labelling of $p$ by elements of $\underline{n}$.  Let $w(p,i) \in F_n(\cC)$ be the associated word in $\ZZ_2^{*n}$ and $\phi:\underline{n} \ra \underline{n}$ be an arbitrary map.  We denote by $\phi_*(i)$ the labelling $(\phi(i_1), \dotsc, \phi(i_k))$.  Then $\sigma_\phi(w(p,i)) = w(p, \phi_*(i))$ showing that $F_n(\cC)$ is $\mathrm{sS}_n$-invariant.

  Fix $n \in \NN \cup \{\infty\}$.  We show that all $\mathrm{sS}_n$-invariant, normal subgroups $N \leq \ZZ_2^{*n}$ arise as $N = F_n(\cC)$ for some category of partitions $\cC$.  Let $N \leq \ZZ_2^{*n}$ be an $\mathrm{sS}_n$-invariant, normal subgroup and define 
  \begin{equation*}
    \cC
    =
    \{p \in \cP \amid p \text{ is a rotation of } \ker(i) \text{ for some } k \in \NN, a_{i_1} \dotsm \, a_{i_k} \in N\}
    \eqstop
  \end{equation*}
  If $p = \ker(i)$ for some $a_{i_1} \dotsm \, a_{i_k} \in N$ and $p' \in \cC(k)$ is a rotation of $p$, then also $p' = \ker(i')$ for some $a_{i'_1} \dotsm \, a_{i'_k} \in N$.  Indeed, it suffices to check this for a rotation of the uttermost left leg of $p$ to the uttermost right of $p$.  If $p'$ arises from $p$ in this way, then $p' = \ker(i_2, \dotsc, i_k, i_1)$ and $a_{i_2} \dotsm \, a_{i_k} a_{i_1} = \Ad (a_{i_1}) (a_{i_1} \dotsm \, a_{i_k}) \in N$ by normality of $N \leq \ZZ_2^{*n}$.  In particular
  \begin{equation}
    \label{eq:upper-row}
    \cC(k)
    =
    \{p \in \cP \amid p = \ker(i) \text{ for some } a_{i_1} \dotsm \, a_{i_k} \in N\}
    \eqstop
  \end{equation}

  We show that $\cC$ is a category of partitions.
  \begin{itemize}
  \item We have $a_1^2 = e \in N$, so $\sqcup = \ker((1,1)) \in \cC$.
  \item It is clear that $\cC$ is closed under rotation.
  \item We show that $\cC$ is closed under involution.  The involution of a rotation of $p \in \cC$ is equal to a rotation of the involution of $p$.  So we have to check that $\ker((i_k, \dotsc, i_1)) \in \cC$ for all $a_{i_1} \dotsm \, a_{i_k} \in N$.  This follows from $a_{i_k} \dotsm \, a_{i_1} = (a_{i_1} \dotsm \, a_{i_k})^{-1}$.
  \item The tensor product of partitions $p,p' \in \cC$ (not necessarily on one row) is a rotation of the tensor product of rotations of $p$ and $p'$ onto one row.  So we have to check that for $a_{i_1} \dotsm \, a_{i_k} \in N$ and $a_{i'_1} \dotsm \, a_{i'_{k'}} \in N$ we have $\ker(i) \ot \ker(i') \in \cC$.  Since $N$ is $\rS_n$-invariant, we may apply a permutation of letters to $a_{i'_1} \dotsm \, a_{i'_{k'}}$ in order to assume that $\{i_1, \dotsc, i_k\} \cap \{i'_1, \dotsc, i'_{k'}\} = \emptyset$.  But then $a_{i_1} \dotsm a_{i_k} a_{i'_1} \dotsm \, a_{i'_{k'}} \in N$ implies $\ker(i) \ot \ker(i') = \ker((i_1, \dotsc, i_k, i'_1, \dotsc, i'_{k'})) \in \cC$.
  \item It remains to show that if $p \in \cC(k,l)$ and $q \in \cC(l,m)$ then also $qp \in \cC(k,m)$.  We may rotate all lower legs of $p$ to its right and likewise for the lower legs of $q$, so as to obtain partition $p' \in \cC(k + l)$ and $q' \in \cC(l + m)$.  Then $qp$ is a rotation of the $l$-fold iterated composition of $p' \ot q'$ with partitions of the form $| \ot \dotsm \ot | \ot \sqcap \ot | \ot \dotsm \ot |$.  So we have to show that the composition of any $p \in \cC(k)$ with a partition of the form $| \ot \dotsm \ot | \ot \sqcap \ot | \ot \dotsm \ot |$ lies in $\cC(k-2)$.  By (\ref{eq:upper-row}), there is $a_{i_1} \dotsm \, a_{i_k} \in N$ such that $p = \ker(i)$.  Denote by $\sqcap_{k,l}$ the partition in $\cC(0,k)$ of the from $| \ot \dotsm \ot | \ot \sqcap \ot | \ot \dotsm \ot |$ such that the first leg of $\sqcap$ is on the $l$-th position of $\sqcap_{k,l}$.  Denote by $\phi: \underline{n} \ra \underline{n}$ the map that satisfies $\phi(i_{l}) = i_{l+1}$ and that fixes all other elements of $\underline{n}$.  By $\mathrm{sS_n}$-invariance of $N$, we have
    \begin{equation*}
      a_{\phi(i_1)} \dotsm \, a_{\phi(i_{l - 1})} a_{\phi(i_{l+2})} \dotsm \, a_{\phi(i_k)}
      =
      \phi_*(a_{i_1} \dotsm \, a_{i_k}) \in N
      \eqstop
    \end{equation*}
    So the composition $p \circ \sqcap_{k,l} = \ker((\phi(i_1), \dotsc, \phi(i_{l-1}), \phi(i_{i+2}), \dotsc, \phi(i_k)))$ lies in $\cC(k-2)$.
  \end{itemize}
  We have shown that $\cC$ is a category of partitions.  Since $N$ is $\rS_n$-invariant, (\ref{eq:upper-row}) shows that $F_n(\cC) = N$, which finishes the proof of the theorem.
\end{proof}

\subsection{Classification of group-theoretical easy quantum groups}
\label{sec:classification-of-group-theoretical-easy-quantum-groups}

We can now combine the previous theorem with the classification of quantum groups in Theorem~\ref{thm:classification-of-qsubgroups} and the classification of diagonal subgroups of easy quantum groups given in Lemma~\ref{lem:presentation-of-diagonal-subgroups}.  In view of Proposition \ref{prop:diagonal-subgroups-restriction-to-group-theoretical-case}, it is natural to consider easy quantum groups, whose categories of partitions contain $\primarypart$.  This yields a complete description of group-theoretical easy quantum groups.
\begin{theorem}
  \label{thm:classification-easy-quantum-groups-bicrossed-product-case}
  Let $(A,u)$ be an easy quantum group associated with the category of partitions $\cC$.  Assume that $\primarypart \in \cC$.  Then $A \cong \Cstar(\Gamma) \Join \cont(\rS_n)$ as compact matrix quantum groups for the strongly symmetric reflection group $\Gamma = \ZZ_2^{*n}/F_n(\cC)$.

Moreover, for every strongly symmetric reflection group $\Gamma$ there is an easy quantum group which is isomorphic with $\Cstar(\Gamma) \Join \cont(\rS_n)$ and whose category of partitions contains $\primarypart$.
\end{theorem}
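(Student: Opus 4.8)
The first assertion should follow by assembling results already in hand, so the plan is to chase the hypotheses through three earlier statements. Since $\primarypart \in \cC$, Proposition~\ref{prop:description-hyperoctahedral-quantum-groups}(ii) shows that $u_{ij}^2$ is a central projection in $A$ for all $i,j$. Every easy quantum group is orthogonal and homogeneous, and I take $(A,u)$ in its maximal version, which is legitimate because easy quantum groups are defined through the universal \Cstar-algebra of Tannaka--Krein duality. Theorem~\ref{thm:classification-of-qsubgroups} then applies and yields an isomorphism $A \cong \Cstar(\Gamma') \Join \cont(\rS_n)$ of compact matrix quantum groups, where $\Gamma'$ is exactly the group constructed in that proof, namely the diagonal subgroup $\diag(A,u)$. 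It then remains only to name this group combinatorially: Lemma~\ref{lem:presentation-of-diagonal-subgroups} identifies the diagonal subgroup of $G_\cC(n)$ with $\ZZ_2^{*n}/F_n(\cC)$, so $\Gamma' = \Gamma = \ZZ_2^{*n}/F_n(\cC)$, and Theorem~\ref{thm:range-of-F} guarantees that $F_n(\cC)$ is $\mathrm{sS}_n$-invariant, so that $\Gamma$ is a strongly symmetric reflection group in the sense of Definition~\ref{def:strong-symmetric-semigroup}.

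For the converse, let $\Gamma = \ZZ_2^{*n}/N$ be a strongly symmetric reflection group, so $N \leq \ZZ_2^{*n}$ is an $\mathrm{sS}_n$-invariant normal subgroup. The strategy is to realise $N$ as some $F_n(\cC)$ with $\primarypart \in \cC$ and then quote the first statement. First I invoke the second half of Theorem~\ref{thm:range-of-F} to obtain a category $\cD$ with $F_n(\cD) = N$; this $\cD$ need not contain $\primarypart$, so I enlarge it to $\cC = \langle \cD \cup \{\primarypart\} \rangle$. Then $\primarypart \in \cC$, and $G_\cC(n)$ is precisely the quotient of $G_\cD(n)$ by the relations that $u_{ij}^2$ be central, since adjoining $\primarypart$ to the category amounts to imposing $T_{\primarypart} \in \Hom(u^{\ot 3}, u^{\ot 3})$, which is equivalent to centrality of the squares by Proposition~\ref{prop:description-hyperoctahedral-quantum-groups}(ii). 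The essential point is that this enlargement leaves the diagonal subgroup untouched, and this is delivered verbatim by Proposition~\ref{prop:diagonal-subgroups-restriction-to-group-theoretical-case} applied to $(A,u) = G_\cD(n)$ and its central-square quotient $(B,v) = G_\cC(n)$, giving $\diag(G_\cC(n)) = \diag(G_\cD(n))$. Combining with Lemma~\ref{lem:presentation-of-diagonal-subgroups} yields $F_n(\cC) = F_n(\cD) = N$, and since $\primarypart \in \cC$ the first statement produces $G_\cC(n) \cong \Cstar(\ZZ_2^{*n}/F_n(\cC)) \Join \cont(\rS_n) = \Cstar(\Gamma) \Join \cont(\rS_n)$, as required.

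The step I expect to need the most care is the identification of groups in the first assertion: Theorem~\ref{thm:classification-of-qsubgroups} delivers its $\Gamma$ only as an abstract diagonal subgroup, whereas here it must be recognised as the explicit quotient $\ZZ_2^{*n}/F_n(\cC)$, so the whole argument hinges on Lemma~\ref{lem:presentation-of-diagonal-subgroups} matching the two descriptions and on confirming that $(A,u)$ may genuinely be taken maximal (so that ``is a version of'' sharpens to ``$\cong$''). In the converse the only delicate claim is that adjoining $\primarypart$ does not alter $F_n$; rather than computing directly with rotations and compositions of $\primarypart$ onto the upper row, I route this entirely through Proposition~\ref{prop:diagonal-subgroups-restriction-to-group-theoretical-case}, which reduces it to an already-proved statement about diagonal subgroups and so makes it immediate.
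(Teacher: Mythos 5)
Your proposal is correct and follows essentially the same route as the paper's own proof: Proposition~\ref{prop:description-hyperoctahedral-quantum-groups}(ii) plus maximality to invoke Theorem~\ref{thm:classification-of-qsubgroups}, then Lemma~\ref{lem:presentation-of-diagonal-subgroups} and Theorem~\ref{thm:range-of-F} to identify the diagonal subgroup as the strongly symmetric reflection group $\ZZ_2^{*n}/F_n(\cC)$, and for the converse the same enlargement of the category by $\primarypart$ justified via Propositions~\ref{prop:description-hyperoctahedral-quantum-groups} and~\ref{prop:diagonal-subgroups-restriction-to-group-theoretical-case}. Your write-up merely makes explicit some steps the paper compresses (notably why adjoining $\primarypart$ leaves $F_n$ unchanged), which is harmless.
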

\begin{proof}
  Since $A$ is an easy quantum group, it is a homogeneous quantum group in its maximal version.  So we can apply Theorem~\ref{thm:classification-of-qsubgroups} showing that $A \cong \Cstar(\Gamma) \Join \cont(G)$, where $\Gamma$ is the diagonal subgroup of $(A,u)$, the group $G$ is some subgroup of $\rS_n$ and the entry $u_{ij}$ of the fundamental corepresentation of $A$ is identified with $u_{g_i}p_{ij}$.  Here $g_1, \dotsc, g_n$ denote the natural generators of $\Gamma$ and $(p_{ij})$ is the fundamental corepresentation of $\cont(G)$ given by the natural embedding $G \hra S_n$.  We have to show that $G = \rS_n$, that $\Gamma \cong (\ZZ_2^{*n})/F_n(\cC)$ and that the latter is a strongly symmetric reflection group.  First note that there is a homomorphism of CMQGs $\pi:A \ra \cont(\rS_n)$ given by $\pi(u_{ij}) = \pi(u_{ij}^2) = p_{ij}$.  It follows that $G = \rS_n$.  Next, Theorem~\ref{thm:range-of-F} shows that $\Gamma = (\ZZ_2^{*n})/F_n(\cC)$ is strongly symmetric.

  Theorem~\ref{thm:range-of-F} also shows that for all strongly symmetric reflection groups on $n$ generators there is a category of partitions $\cC$ such that $\Gamma \cong \ZZ_2^{*n}/F_n(\cC)$.  We can invoke Propositions \ref{prop:description-hyperoctahedral-quantum-groups} and \ref{prop:diagonal-subgroups-restriction-to-group-theoretical-case} so as to assume $\primarypart \in \cC$.  But then the first part of the proof shows that $A_\cC(n) \cong \Cstar(\Gamma) \Join \cont(\rS_n)$.  This finishes the proof.
\end{proof}

The next theorem deduces a complete classification of group-theoretical categories of partitions.
\begin{theorem}
  \label{thm:1-1-correspondence}
  There is a one-to-one correspondence between
  \begin{itemize}
  \item categories of partitions which contain $\primarypart$ and
  \item strongly symmetric reflection groups on countably many generators
  \end{itemize}
  It is given by associating $\ZZ_2^{*\infty}/ F_\infty(\cC)$ with a category of partitions $\cC$.
\end{theorem}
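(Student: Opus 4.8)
The plan is to show that the assignment $\Phi\colon \cC \mapsto \ZZ_2^{*\infty}/F_\infty(\cC)$ is a well-defined bijection from categories of partitions containing $\primarypart$ onto strongly symmetric reflection groups on countably many generators. Well-definedness requires nothing new: by Lemma \ref{lem:FC-is-a-group} and Theorem \ref{thm:range-of-F}, $F_\infty(\cC)$ is a normal, $\mathrm{sS}_\infty$-invariant subgroup of $\ZZ_2^{*\infty}$, so $\ZZ_2^{*\infty}/F_\infty(\cC)$ is by Definition \ref{def:strong-symmetric-semigroup} a strongly symmetric reflection group on countably many generators.

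For surjectivity I would start from a strongly symmetric reflection group $\ZZ_2^{*\infty}/N$, so that $N$ is normal and $\mathrm{sS}_\infty$-invariant. The second half of Theorem \ref{thm:range-of-F} produces a category $\cC_0$ with $F_\infty(\cC_0) = N$. As $\cC_0$ need not contain $\primarypart$, I pass to $\cC = \langle \cC_0, \primarypart \rangle$. The point is that enlarging by $\primarypart$ does not alter the diagonal data: reading Proposition \ref{prop:diagonal-subgroups-restriction-to-group-theoretical-case} through Proposition \ref{prop:description-hyperoctahedral-quantum-groups}(ii), imposing $\primarypart$ amounts to imposing that the $u_{ij}^2$ be central, and this leaves the diagonal subgroup unchanged, so $F_n(\cC) = F_n(\cC_0)$ for every finite $n$. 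Since each word of $F_\infty$ involves only finitely many letters, $\mathrm{sS}_\infty$-invariance lifts these equalities to $F_\infty(\cC) = F_\infty(\cC_0) = N$, and now $\primarypart \in \cC$ with $\Phi(\cC) = \ZZ_2^{*\infty}/N$.

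The crux is injectivity. Suppose $\cC_1, \cC_2 \ni \primarypart$ with $F_\infty(\cC_1) = F_\infty(\cC_2)$. First I would record the identity $F_n(\cC) = F_\infty(\cC) \cap \ZZ_2^{*n}$ for every finite $n$: the inclusion $\subseteq$ is clear, and for $\supseteq$ one writes an element as $w(p,i)$ with reduced form lying in $\ZZ_2^{*n}$ and applies $\sigma_\phi$ for a retraction $\phi\colon \underline{\infty} \ra \underline{n}$ fixing $\{1, \dotsc, n\}$; then $\sigma_\phi$ fixes the element while $w(p,\phi_*(i)) = \sigma_\phi(w(p,i))$ is a word from a compatible labelling in $\underline{n}^k$, whence the element lies in $F_n(\cC)$. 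This gives $F_n(\cC_1) = F_n(\cC_2)$ for all $n$. Now Theorem \ref{thm:classification-easy-quantum-groups-bicrossed-product-case} identifies $G_{\cC_j}(n)$ with $\Cstar(\ZZ_2^{*n}/F_n(\cC_j)) \Join \cont(\rS_n)$ and its fundamental corepresentation with $(u_{g_i}p_{ij})$; since the groups and the corepresentations literally coincide for $j = 1, 2$, the easy quantum groups $G_{\cC_1}(n)$ and $G_{\cC_2}(n)$ are similar for every $n$. The bijection of Theorem \ref{thm:classification-easy-quantum-groups-by-partitions} then forces $\cC_1 = \cC_2$.

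I expect the main obstacle to be precisely this injectivity, where one must recover the \emph{entire} category from the purely diagonal invariant $F_\infty(\cC)$. The device that makes this tractable is to avoid reconstructing $\cC$ combinatorially and instead route through the quantum-group classification: the hypothesis $\primarypart \in \cC$ is exactly what lets Theorem \ref{thm:classification-easy-quantum-groups-bicrossed-product-case} recognise $G_\cC(n)$ as a semi-direct product determined entirely by $F_n(\cC)$, after which Theorem \ref{thm:classification-easy-quantum-groups-by-partitions} returns the category. The only genuinely technical step is the auxiliary identity $F_n(\cC) = F_\infty(\cC) \cap \ZZ_2^{*n}$, which must be verified with the relabelling maps $\sigma_\phi$ from the proof of Theorem \ref{thm:range-of-F}.
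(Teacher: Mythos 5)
Your proof is correct and follows essentially the same route as the paper: surjectivity via Theorem \ref{thm:range-of-F} together with Proposition \ref{prop:diagonal-subgroups-restriction-to-group-theoretical-case}, and injectivity by routing through Theorem \ref{thm:classification-easy-quantum-groups-bicrossed-product-case} and the bijection of Theorem \ref{thm:classification-easy-quantum-groups-by-partitions}. The only difference is that you explicitly verify the identity $F_n(\cC) = F_\infty(\cC) \cap \ZZ_2^{*n}$ via the retraction maps $\sigma_\phi$, a step the paper leaves implicit in its remark that $\ZZ_2^{*n}/F_n(\cC)$ is the subgroup of $\ZZ_2^{*\infty}/F_\infty(\cC)$ generated by the first $n$ generators -- a worthwhile addition rather than a deviation.
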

\begin{proof}
  By Theorem \ref{thm:range-of-F} every $\mathrm{sS}_\infty$-invariant, normal subgroup $N \leq \ZZ_2^{*\infty}$ arises as $F_\infty(\cC)$ for some category of partitions $\cC$.  Since $F_\infty(\cC) = F_\infty(\langle \cC, \primarypart \rangle)$ by Proposition \ref{prop:diagonal-subgroups-restriction-to-group-theoretical-case}, we can assume that $\cC$ contains $\primarypart$.  This shows that $\cC \mapsto \ZZ_2^{*\infty}/ F_\infty(\cC)$ maps surjectively from all categories of partitions which contain $\primarypart$ onto strongly symmetric reflection groups on countably many generators.  It remains to show that this map is injective.  
    
  Let $\cC$ be a category of partitions which contains $\primarypart$.  We show that $\cC$ can be recovered from $\ZZ_2^{*\infty}/F_\infty(\cC)$.  Note that $\ZZ_2^{*n}/F_n(\cC)$ is the subgroup of $\ZZ_2^{*\infty}/F_\infty(\cC)$ generated by the first $n$ generators.  Moreover, $\cont(G_\cC(n)) \cong \Cstar(\ZZ_2^{*n}/F_n(\cC)) \Join \cont(\rS_n)$ for all $n \in \NN$ by Theorem \ref{thm:classification-easy-quantum-groups-bicrossed-product-case}.  So Theorem \ref{thm:classification-easy-quantum-groups-by-partitions} implies that $\cC$ can be recovered from $F_\infty(\cC)$. 
\end{proof}

\section{Applications}
\label{sec:Applications}

\subsection{Uncountably many pairwise non-isomorphic easy quantum groups}
\label{sec:uncountably-many-easy-quantum-groups}

In this section we show that there are uncountably many easy quantum groups.  We inject the lattice of varieties of groups into the lattice of hyperoctahedral categories of partitions.  The generators of $\freegrp{\infty}$ are denoted by $x_1, x_2, \dotsc$.
\begin{definition}
  \label{def:variety-of-groups}
  Let $w = w(x_1, \dotsc, x_n) \in \freegrp{\infty}$ be a word in the letters $x_1, \dotsc, x_n$ and let $\Gamma$ be a group.  Then the \emph{identical relation $w$ holds in $\Gamma$} if for all $g_1, \dotsc, g_n \in \Gamma$, we have $w(g_1, \dotsc, g_n) = e$.

  Let $W \subset \freegrp{\infty}$ be any subset of the free group on countably many generators $x_1, x_2, \dotsc$.  The \emph{variety of groups} $\cV(W)$ associated with $W$ is the class of all groups $\Gamma$ such that for all $w \in W$ the identical relation $w$ holds in $\Gamma$.
\end{definition}

Let us state a classical result in the theory of varieties of groups.  A subgroup $N \leq \freegrp{\infty}$ is called \emph{fully characteristic} if $\phi(N) \subset N$ for all $\phi \in \End(\freegrp{\infty})$.
\begin{theorem}[See \cite{neumann37} or Theorem 14.31 in \cite{neumann67}]
\label{thm:varieties-and-fully-characteristic-subgroups}
  There is a lattice anti-isomorphism between varieties of groups and fully characteristic subgroups of $\freegrp{\infty}$ sending a variety of groups to the set of all identical relations that hold in it.
\end{theorem}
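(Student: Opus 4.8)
The plan is to produce an explicit pair of mutually inverse, order-reversing maps between the two lattices and verify that each is well defined. Write $F := \freegrp{\infty}$. For a class of groups $\cK$ set $\mathrm{laws}(\cK) = \{w \in F \mid w \text{ holds as an identical relation in every } \Gamma \in \cK\}$, and for a subset $N \subseteq F$ let $\cV(N)$ be the variety it defines as in Definition \ref{def:variety-of-groups}. The candidate maps are $\cV \mapsto \mathrm{laws}(\cV)$ and $N \mapsto \cV(N)$.

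First I would record that $\mathrm{laws}(\cK)$ is always a fully characteristic subgroup. The clean way is to note that a word $w$ holds in $\Gamma$ exactly when $w \in \kernel(\phi)$ for every group homomorphism $\phi \colon F \to \Gamma$; hence $\mathrm{laws}(\cK) = \bigcap_{\Gamma \in \cK} \bigcap_{\phi \in \Hom(F,\Gamma)} \kernel(\phi)$ is an intersection of normal subgroups and is therefore normal. Full characteristicity then follows because for $\psi \in \End(F)$ one has $\phi \circ \psi \in \Hom(F,\Gamma)$, so $(\phi \circ \psi)(w) = e$ whenever $w \in \mathrm{laws}(\cK)$, which says precisely that $\psi(w) \in \mathrm{laws}(\cK)$. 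Both maps are visibly order-reversing for inclusion: enlarging the set of imposed laws shrinks the class of groups satisfying them, and enlarging the class of groups shrinks the set of common laws.

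The two compositions form the heart of the argument. That $\cV(\mathrm{laws}(\cV)) = \cV$ for a variety $\cV = \cV(W)$ is formal: since every $w \in W$ holds throughout $\cV(W)$ we have $W \subseteq \mathrm{laws}(\cV(W))$, and combining this tautological inclusion with the order-reversing property of both maps yields both containments. The substantive direction is $\mathrm{laws}(\cV(N)) = N$ for a fully characteristic $N$, where the inclusion $N \subseteq \mathrm{laws}(\cV(N))$ is immediate from the definitions. For the reverse inclusion I would use the relatively free group $F/N$, the key claim being that $F/N$ itself lies in $\cV(N)$. To see this, take $w \in N$ and any homomorphism $\phi \colon F \to F/N$; lifting the images of the generators back to $F$ produces $\psi \in \End(F)$ with $\phi = q \circ \psi$, where $q \colon F \to F/N$ is the quotient map, and full characteristicity gives $\psi(w) \in N$, hence $\phi(w) = q(\psi(w)) = e$. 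Thus every element of $N$ is a law in $F/N$, so $F/N \in \cV(N)$. Now if $w \in \mathrm{laws}(\cV(N))$ then in particular $w$ holds in $F/N$; evaluating at $q$ itself, that is substituting $x_i \mapsto x_i N$, gives $q(w) = e$, i.e.\ $w \in N$.

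I expect the step showing $F/N \in \cV(N)$ to be the main obstacle, since it is exactly the point at which full characteristicity of $N$, rather than mere normality, is indispensable. To conclude, I would observe that fully characteristic subgroups are closed under arbitrary intersection, and varieties under arbitrary intersection of classes (with $\bigcap_\alpha \cV(W_\alpha) = \cV(\bigcup_\alpha W_\alpha)$), so that both sides are complete lattices under inclusion; a mutually inverse pair of order-reversing bijections between complete lattices automatically interchanges arbitrary meets and joins, which delivers the asserted lattice anti-isomorphism.
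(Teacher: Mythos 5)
Your proof is correct, but there is nothing in the paper to compare it against: the paper quotes this statement as a classical result of B.~H.~Neumann, citing \cite{neumann37} and Theorem 14.31 of \cite{neumann67}, and gives no proof of its own. Your argument is essentially the standard one from the literature. The assignments $\cK \mapsto \mathrm{laws}(\cK)$ and $N \mapsto \cV(N)$ form an antitone Galois connection, the identity $\cV(\mathrm{laws}(\cV(W))) = \cV(W)$ is then formal, and the only substantive point is identifying the closed subgroups as exactly the fully characteristic ones. You settle this correctly with the relatively free group $F/N$: the lift of an arbitrary homomorphism $\phi \colon F \to F/N$ through the quotient map $q$ to an endomorphism $\psi$ of $F$ is exactly where full characteristicity (rather than mere normality or even characteristicity) of $N$ is used, giving $F/N \in \cV(N)$, after which evaluation at $q$ recovers $N$ from $\mathrm{laws}(\cV(N))$. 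You also handle correctly the two details that are easiest to gloss over: first, that ``$w$ is a law in $\Gamma$'' is equivalent to ``$w \in \kernel(\phi)$ for every $\phi \in \Hom(F,\Gamma)$'', which rests on freeness and on the fact that $w$ involves only finitely many letters, and which is what makes $\mathrm{laws}(\cK)$ a fully characteristic subgroup rather than just a fully invariant subset; second, that a pair of mutually inverse order-reversing bijections between complete lattices automatically interchanges meets and joins, so the order anti-isomorphism you construct is genuinely a lattice anti-isomorphism, as the statement requires.
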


Denote by  $E \leq \ZZ_2^{* \infty}$ the group consisting of all words of even length.  We identify $E$ with a free group with basis $x_1 = a_1a_2, x_2 = a_1a_3, \dotsc$.  The following observation is key to this section.

\begin{proposition}
  \label{prop:E-is-fully-characteristic}
  The subgroup $E \leq \ZZ_2^{* \infty}$ is fully characteristic.  In particular, every fully characteristic subgroup of $\freegrp{\infty} \cong E$ is fully characteristic in $\ZZ_2^{* \infty}$.
\end{proposition}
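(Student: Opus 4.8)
The plan is to exhibit $E$ as the kernel of a canonical homomorphism and then to show that this homomorphism is preserved by every endomorphism. Concretely, I would introduce the \emph{parity homomorphism} $\epsilon \colon \ZZ_2^{*\infty} \to \ZZ_2$ determined by $\epsilon(a_i) = 1$ for every $i$; this is well defined precisely because each $a_i$ has order two, and by construction $E = \ker \epsilon$ is exactly the set of even-length words, a normal subgroup of index two. It then suffices to prove that $\epsilon \circ \phi = \epsilon$ for every $\phi \in \End(\ZZ_2^{*\infty})$, since this forces $\phi(\ker \epsilon) \subseteq \ker \epsilon$.

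Since $\ZZ_2^{*\infty}$ is the free product of the groups $\langle a_i \rangle \cong \ZZ_2$, an endomorphism $\phi$ is completely determined by the images $\phi(a_i)$, each of which is an element of order dividing two. The key structural input I would use is the description of torsion in a free product: every element of order two in $\ZZ_2^{*\infty}$ is conjugate to one of the generators, say $\phi(a_i) = g a_j g^{-1}$. Applying $\epsilon$ and using that $\ZZ_2$ is abelian of exponent two gives $\epsilon(g a_j g^{-1}) = \epsilon(a_j) = 1 = \epsilon(a_i)$. Hence $\epsilon \circ \phi$ and $\epsilon$ agree on all generators, and being homomorphisms into an abelian group they coincide, which yields $\phi(E) \subseteq E$. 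I expect the identification of the order-two elements with conjugates of generators --- equivalently, the verification that the image of each generator lands outside $E$ --- to be the main obstacle, the remaining manipulations being purely formal.

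For the final assertion I would argue by restriction. Fix a subgroup $N \leq E$ that is fully characteristic in $E \cong \freegrp{\infty}$, and let $\phi \in \End(\ZZ_2^{*\infty})$ be arbitrary. The first part shows $\phi(E) \subseteq E$, so $\phi$ restricts to an endomorphism $\phi|_E \in \End(E)$. Full characteristicity of $N$ inside $E$ then gives $\phi(N) = \phi|_E(N) \subseteq N$. As $\phi$ was arbitrary, $N$ is fully characteristic in $\ZZ_2^{*\infty}$, which completes the proof.
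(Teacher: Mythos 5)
Your parity--homomorphism strategy is sound in outline and is in fact a more explicit version of the paper's own (very terse) argument, but it has a genuine gap at exactly the step you flagged as the main obstacle, and the gap cannot be closed. An endomorphism $\phi \in \End(\ZZ_2^{*\infty})$ sends each generator to an element of order \emph{dividing} two, while the torsion theorem for free products only describes the elements of order \emph{exactly} two; the case $\phi(a_i) = e$ remains, and there $\epsilon(\phi(a_i)) = 0 \neq 1 = \epsilon(a_i)$. This is not a removable technicality: the assignment $\phi(a_1) = e$, $\phi(a_2) = a_1$, $\phi(a_i) = a_i$ for $i \geq 3$ defines a legitimate endomorphism of $\ZZ_2^{*\infty}$ by the universal property of the free product (each image squares to $e$), yet $a_1 a_2 \in E$ while $\phi(a_1 a_2) = a_1 \notin E$. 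So $\epsilon \circ \phi \neq \epsilon$ and, worse, $\phi(E) \not\subseteq E$: the subgroup $E$ is \emph{not} invariant under endomorphisms that kill some but not all of the generators. The same example also defeats the ``in particular'' clause of the statement, by taking $N = E$, which is trivially fully characteristic in $E \cong \freegrp{\infty}$.

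You should know that the paper's own proof contains the identical gap, only less visibly: it simply asserts that $\phi(a_{i_1}) \dotsm \phi(a_{i_{2k}}) \in E$, which is correct only when every factor $\phi(a_{i_j})$ has odd parity, i.e. is nontrivial (a nontrivial element with trivial square is conjugate to some $a_j$, hence has parity $1$ --- this is exactly your torsion argument). Both the statement and your proof become correct once one restricts to endomorphisms $\phi$ with $\phi(a_i) \neq e$ for all $i$, and this restricted version is all the paper ever uses: in the proof of Theorem \ref{thm:easy-quantum-groups-and-varieties} one only needs that a fully characteristic subgroup of $E$ is invariant under the letter identifications $\sigma_\phi \in \mathrm{sS}_\infty$ and under all inner automorphisms $\Ad(g)$ of $\ZZ_2^{*\infty}$; these send each generator to a generator, respectively to a conjugate of a generator, so they preserve $\epsilon$, restrict to endomorphisms of $E$, and your final restriction argument then applies to them verbatim.
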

\begin{proof}
  If $w = a_{i_1} a_{i_2} \dotsm a_{i_{2k}} \in E$ and $\phi \in \End(\ZZ_2^{* \infty})$, then $\phi(w) = \phi(a_{i_1}) \dotsm \phi(a_{i_{2k}}) \in E$.  So $E$ is fully characteristic in $\ZZ_2^{* \infty}$.  So every endomorphism of $\ZZ_2^{* \infty}$ restricts to $E$. This shows that a fully characteristic subgroup $N \leq E \cong \freegrp{\infty}$ is also a fully characteristic subgroup in $\ZZ_2^{* \infty}$.  This finishes the proof.
\end{proof}

\begin{remark}
  We do not know whether any fully characteristic subgroup $\ZZ_2^{*\infty}$ that is contained in $E$ and is fully characteristic, is also fully characteristic in $\freegrp{\infty}$.  If this was the case, then hyperoctahedral easy quantum groups $A$ whose entries are subject to the condition that $u_{ij}^2$ is central in $A$, would be classified by non-trivial varieties of groups.
\end{remark}

Combining the previous proposition with the classification of group-theoretical hyperoctahedral categories of partitions, we obtain the following result.
\begin{theorem}
\label{thm:easy-quantum-groups-and-varieties}
  There is a lattice injection from the lattice of non-trivial varieties of groups into the lattice of easy quantum groups.
\end{theorem}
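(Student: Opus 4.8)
The plan is to build the injection as the composition of the three classification results already at hand, carrying the lattice structure along at each step.

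Concretely, I would send a non-trivial variety of groups $\cV$ first to its fully characteristic subgroup $N \leq \freegrp{\infty}$ of identical relations, via the lattice anti-isomorphism of Theorem \ref{thm:varieties-and-fully-characteristic-subgroups}. Using the identification $\freegrp{\infty} \cong E \leq \ZZ_2^{*\infty}$ and Proposition \ref{prop:E-is-fully-characteristic}, the subgroup $N$ is fully characteristic in $\ZZ_2^{*\infty}$. Since $\mathrm{sS}_\infty \subset \End(\ZZ_2^{*\infty})$ and inner automorphisms are endomorphisms, $N$ is in particular $\mathrm{sS}_\infty$-invariant and normal. Hence Theorem \ref{thm:range-of-F} and Theorem \ref{thm:1-1-correspondence} provide a unique group-theoretical category of partitions $\cC$ with $F_\infty(\cC) = N$, and with it the easy quantum group associated with $\cC$; the assignment $\cV \mapsto \cC$ is the map I would propose.

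For injectivity and the lattice property, I would check that each arrow is a bijection between (sub)lattices that is order-preserving or order-reversing, and then compose. The passage $\cV \mapsto N$ reverses order; the identification through $E$ preserves it; the bijection $\cC \mapsto F_\infty(\cC)$ of Theorem \ref{thm:1-1-correspondence} is order-preserving (a larger category produces more words), so its inverse is as well; and $\cC \mapsto G_\cC$ reverses order for the quantum-subgroup relation, since enlarging $\cC$ imposes more intertwiners and shrinks the quantum group. Two reversals make the composite order-preserving and injective. To promote this to a genuine lattice embedding I would verify that the intermediate classes are sublattices whose operations match: fully characteristic subgroups are closed under intersection and under taking the generated subgroup, and because all of them sit inside $E$ these operations coincide whether formed in $E$ or in $\ZZ_2^{*\infty}$; dually, categories containing $\primarypart$ are closed under intersection and generation, hence form a sublattice of all categories of partitions on which $F_\infty$ restricts to a lattice isomorphism. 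The composite is thus an order-isomorphism onto its image, hence a lattice injection.

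The one point requiring genuine care — and the place I expect the main obstacle — is the non-triviality hypothesis. The variety of trivial groups corresponds to $N = E$, that is to $a_i a_j \in N$ for $i \neq j$; by (\ref{eq:upper-row}) this forces $\singleton \ot \singleton \in \cC$, so the fundamental corepresentation ceases to be irreducible (Proposition \ref{prop:description-hyperoctahedral-quantum-groups}(iii)) and the category leaves the hyperoctahedral range. Excluding precisely this endpoint keeps the image inside the hyperoctahedral easy quantum groups and ensures that distinct varieties yield non-similar quantum groups by Theorem \ref{thm:classification-easy-quantum-groups-by-partitions}. Beyond this, the delicate bookkeeping is to confirm that the lattice operations transported across $\freegrp{\infty} \cong E$ and across the $F_\infty$-correspondence really are the meets and joins of the ambient lattice of easy quantum groups, rather than some coarser structure.
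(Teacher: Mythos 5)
Your proposal is correct and follows essentially the same route as the paper's proof: composing the anti-isomorphism of Theorem \ref{thm:varieties-and-fully-characteristic-subgroups} with the embedding $\freegrp{\infty} \cong E \leq \ZZ_2^{*\infty}$ from Proposition \ref{prop:E-is-fully-characteristic}, the correspondence of Theorems \ref{thm:range-of-F} and \ref{thm:1-1-correspondence}, and the anti-isomorphism of Theorem \ref{thm:classification-easy-quantum-groups-by-partitions}. Your extra bookkeeping on order reversals and sublattice compatibility, and your observation that the trivial variety corresponds to $N = E$ and hence to a category containing $\doublesingleton$, are sound refinements of the paper's terser argument (though note that injectivity would survive even without excluding that endpoint, since the correspondences are bijections on the larger classes).
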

\begin{proof}
  By Theorem \ref{thm:classification-easy-quantum-groups-by-partitions} there is a lattice anti-isomorphism between categories of partitions and easy quantum groups.  Moreover by Theorem \ref{thm:1-1-correspondence}, group-theoretical hyperoctahedral categories of partitions are in one-one correspondence with $\mathrm{sS}_\infty$-invariant, normal subgroups of $\ZZ_2^{*\infty}$ and this correspondence preserves the lattice structure given by inclusion.
  
  By Theorem \ref{thm:varieties-and-fully-characteristic-subgroups}, there is a lattice anti-isomorphism between varieties of groups and fully characteristic subgroups of $\freegrp{\infty}$.

  So it suffices to find an embedding of lattices from fully characteristic subgroups of $\freegrp{\infty}$ into $\mathrm{sS}_\infty$-invariant, normal subgroups of $\ZZ_2^{*\infty}$.  By Proposition \ref{prop:E-is-fully-characteristic}, the embedding $\freegrp{\infty} \cong E \leq \ZZ_2^{*\infty}$ has this property.  This finishes the proof.
\end{proof}

The correspondence from the last theorem allows us to translate classification results for varieties of groups into results on easy quantum groups.  In \cite{banicacurranspeicher09_2}, the question was raised whether or not all easy quantum groups are either classical, free, half-liberated or form part of a multi-parameter family unifying the series of quantum groups $H_n^{(s)}$ and $H_n^{[s]}$.  We can answer this question in the negative.

\begin{theorem}
\label{thm:uncountably-many-easy-quantum-groups}
  There are uncountably many pairwise non-isomorphic easy quantum groups.
\end{theorem}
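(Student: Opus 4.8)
The plan is to read the statement off the lattice injection of Theorem~\ref{thm:easy-quantum-groups-and-varieties}, feeding in the classical fact that the lattice of varieties of groups is uncountable; since all the structural work is already done, only injectivity of a map of sets is needed. Concretely I would assemble the injection as the composition of the anti-isomorphism between varieties of groups and fully characteristic subgroups of $\freegrp{\infty}$ (Theorem~\ref{thm:varieties-and-fully-characteristic-subgroups}), the embedding $\freegrp{\infty} \cong E \leq \ZZ_2^{*\infty}$ of Proposition~\ref{prop:E-is-fully-characteristic}, the one-to-one correspondence of Theorem~\ref{thm:1-1-correspondence} between $\mathrm{sS}_\infty$-invariant normal subgroups of $\ZZ_2^{*\infty}$ and categories of partitions containing $\primarypart$, and the bijection of Theorem~\ref{thm:classification-easy-quantum-groups-by-partitions} between categories of partitions and easy quantum groups. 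Each arrow is injective, so distinct varieties yield distinct group-theoretical categories, hence pairwise non-similar easy quantum groups.

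Next I would invoke the resolution of Hanna Neumann's problem, proved independently by Ol'shanskii, Adian and Vaughan-Lee, that there are $2^{\aleph_0}$ varieties of groups. Discarding the single trivial variety still leaves uncountably many non-trivial ones, and pushing these through the injection above produces uncountably many pairwise non-isomorphic easy quantum groups, which is the assertion.

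The substance of the theorem lies entirely in the machinery already built, and the one point I would treat with care is the precise meaning of ``pairwise non-isomorphic''. What the injection literally delivers is uncountably many families $G_\cC(n)$, $n \in \NN$, indexed by pairwise distinct categories $\cC$ containing $\primarypart$; that distinct categories give genuinely different easy quantum groups follows from the recovery argument in the proof of Theorem~\ref{thm:1-1-correspondence}, where $\cC$ is reconstructed from the quantum groups $\cont(G_\cC(n)) \cong \Cstar(\ZZ_2^{*n}/F_n(\cC)) \Join \cont(\rS_n)$ as $n$ ranges over $\NN$. In particular any two of these families already differ at some finite level $n$, so they are non-isomorphic as easy quantum groups. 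The main obstacle is therefore not internal but the need to import the deep group-theoretic cardinality result, which supplies the uncountability that the combinatorial classification by itself does not reveal.
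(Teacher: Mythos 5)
Your proposal is correct and follows essentially the same route as the paper: it inlines the proof of Theorem~\ref{thm:easy-quantum-groups-and-varieties} (Neumann's correspondence, the embedding $\freegrp{\infty} \cong E \leq \ZZ_2^{*\infty}$ from Proposition~\ref{prop:E-is-fully-characteristic}, Theorem~\ref{thm:1-1-correspondence}, and the bijection of Theorem~\ref{thm:classification-easy-quantum-groups-by-partitions}) and then feeds in Olshanskii's theorem that there are continuum many varieties of groups. Your extra care about what ``pairwise non-isomorphic'' means for the families $G_\cC(n)$ -- that distinct categories are detected at some finite level $n$ via the recovery argument -- is a point the paper leaves implicit, but it is the same argument, not a different one.
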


This follows directly from Theorem \ref{thm:easy-quantum-groups-and-varieties} and the following result by Olshanskii.

\begin{theorem}[See \cite{olshanskii70}]
\label{thm:uncountably-many-varieties}
  The class of varieties of groups has cardinality equal to the continuum.
\end{theorem}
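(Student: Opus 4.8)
The plan is to establish the two halves of ``cardinality equal to the continuum'' separately: the upper bound ``at most $2^{\aleph_0}$'' and the lower bound ``at least $2^{\aleph_0}$''. For the upper bound I would invoke Theorem~\ref{thm:varieties-and-fully-characteristic-subgroups}, which gives a lattice anti-isomorphism between varieties of groups and fully characteristic subgroups of $\freegrp{\infty}$. Since $\freegrp{\infty}$ is a countable group, it has at most $2^{\aleph_0}$ subsets and hence at most $2^{\aleph_0}$ fully characteristic subgroups. Thus there are at most continuum many varieties, and this direction is entirely routine.

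The substance of the theorem lies in the lower bound, and I would reduce it to an independence statement about group laws. Suppose one can produce a sequence of words $w_1, w_2, \dotsc \in \freegrp{\infty}$ together with a sequence of groups $G_1, G_2, \dotsc$ such that each $G_n$ satisfies the identical relation $w_m = e$ for every $m \neq n$, yet fails the identical relation $w_n = e$. For a subset $S \subseteq \NN$ write $\cV_S$ for the variety defined by the laws $\{w_m : m \in S\}$ in the sense of Definition~\ref{def:variety-of-groups}. Then $G_n \in \cV_S$ holds precisely when $n \notin S$: for $m \in S$ with $m \neq n$ the law $w_m = e$ holds in $G_n$, while $w_n = e$ fails, so the only obstruction to membership is $n \in S$. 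Consequently $S$ is recovered from $\cV_S$ by the formula $S = \{n \in \NN : G_n \notin \cV_S\}$, which shows that the assignment $S \mapsto \cV_S$ is injective. Since there are $2^{\aleph_0}$ subsets of $\NN$, this produces continuum many pairwise distinct varieties, matching the upper bound and giving equality with the continuum.

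The heart of the matter, and the step I expect to be the main obstacle, is the explicit construction of the mutually independent laws $w_n$ together with the separating witnesses $G_n$. This is exactly the content of Olshanskii's work (and, independently, of Adian and Vaughan-Lee). The standard route is to work inside a tractable subclass, such as nilpotent groups of bounded class and prime exponent, where commutator laws translate into multilinear conditions on an associated Lie structure; one then engineers a countable family of such laws and finite $p$-group witnesses that pairwise fail to imply one another. Proving genuine independence, that is, that no $w_n$ is a consequence of the remaining $w_m$ or equivalently that $G_n$ really separates the $n$-th law from all the others, is the delicate combinatorial core of the argument, and it is here that the real difficulty of the theorem resides.
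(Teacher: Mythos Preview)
The paper does not prove this theorem. It is stated with the attribution ``See \cite{olshanskii70}'' and used as a black box: combined with Theorem~\ref{thm:easy-quantum-groups-and-varieties}, it immediately yields Theorem~\ref{thm:uncountably-many-easy-quantum-groups}. There is therefore no proof in the paper to compare your proposal against.

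That said, your outline is a correct high-level account of how the result is established. The upper bound via Theorem~\ref{thm:varieties-and-fully-characteristic-subgroups} and countability of $\freegrp{\infty}$ is indeed routine. For the lower bound, your reduction to a sequence of independent laws $w_n$ with separating witness groups $G_n$, and the injectivity argument $S \mapsto \cV_S$, are exactly right. You also correctly identify the genuine content: producing such an independent family is the hard combinatorial core, and this is precisely what Olshanskii (and independently Adian and Vaughan-Lee) supplied. Your proposal is thus an accurate roadmap, but it remains a roadmap rather than a proof until that construction is carried out or cited in full.
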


\subsection{There are non-easy quantum groups}
\label{sec:non-partiton-quantum-groups}

It was an open question, whether the formalism of easy quantum groups describes homogeneous orthogonal compact matrix quantum groups, i.e. all intermediate quantum groups $O_n^+ \supset G \supset \rS_n$.  We also answer this question in the negative.

\begin{theorem}
  \label{thm:non-easy-quantum-group}
  For $n \geq 3$ there is an example of an intermediate quantum group $O_n^+ \supset G \supset \rS_n$ such that $G$ is not an easy quantum group.
\end{theorem}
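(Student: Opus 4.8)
The plan is to exploit the precise gap between Theorem~\ref{thm:classification-of-qsubgroups} and Theorem~\ref{thm:classification-easy-quantum-groups-bicrossed-product-case}. By Theorem~\ref{thm:classification-of-qsubgroups} together with Proposition~\ref{prop:semi-direct-products}, the homogeneous orthogonal compact matrix quantum groups $(A,u)$ with all $u_{ij}^2$ central are exactly the versions of $\Cstar(\Gamma) \Join \cont(\rS_n)$ attached to normal subgroups $N \leq \ZZ_2^{*n}$ invariant under the permutation action of $\rS_n$ on the generators, where $\Gamma = \ZZ_2^{*n}/N$; and for such a quantum group the diagonal subgroup (an isomorphism invariant by Definition~\ref{def:diagonal-subgroup}) is $\ZZ_2^{*n}/N$, since setting the off-diagonal entries $u_{g_i}p_{ij}$ to zero kills $\cont(\rS_n)$ and leaves $\Cstar(\Gamma)$. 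On the other hand, if $(A,u)$ is \emph{easy} with category $\cC$, then $u_{ij}^2 = p_{ij}$ central forces $\primarypart \in \cC$ by Proposition~\ref{prop:description-hyperoctahedral-quantum-groups}, and Theorem~\ref{thm:classification-easy-quantum-groups-bicrossed-product-case} then shows its diagonal subgroup is a strongly symmetric reflection group, i.e. $N = F_n(\cC)$ is $\mathrm{sS}_n$-invariant (invariant under all identifications of letters, cf.\ Definition~\ref{def:strong-symmetric-semigroup} and Theorem~\ref{thm:range-of-F}). Hence it suffices to produce, for every $n \geq 3$, a normal subgroup $N \leq \ZZ_2^{*n}$ that is $\rS_n$-invariant but \emph{not} $\mathrm{sS}_n$-invariant; the quantum group $G = \Cstar(\ZZ_2^{*n}/N) \Join \cont(\rS_n)$ is then an intermediate quantum group $O_n^+ \supset G \supset \rS_n$ whose diagonal subgroup obstructs easiness.

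To build such an $N$, I would take $\Gamma_n$ to be the finite $2$-group generated by $n$ involutions $a_1, \dotsc, a_n$ in which all the squares $(a_ia_j)^2$ with $i \neq j$ coincide with a single central involution $b$; equivalently, the central extension of $\ZZ_2^{\oplus n}$ by $\ZZ_2 = \langle b \rangle$ with commutator pairing $[a_i,a_j] = (a_ia_j)^2 = b$ for all $i \neq j$. An explicit model is $\ZZ_2^{\oplus n} \times \ZZ_2$ with product
\[
  (x,s)(y,t) = \bigl(x+y,\ s+t+\textstyle\sum_{k<l} x_k y_l\bigr)\eqcomma
\]
in which $a_i = (e_i,0)$ and $b = (0,1)$; it has order $2^{n+1}$, so in particular $b \neq e$. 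Setting $N = \ker(\ZZ_2^{*n} \thra \Gamma_n)$, the defining relation $[a_i,a_j] = b$ is manifestly unchanged by permuting indices, so every permutation of the generators extends to an automorphism of $\Gamma_n$; thus $N$ is $\rS_n$-invariant and, at the same time, this is the $\rS_n$-action needed to form the semi-direct product.

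The failure of $\mathrm{sS}_n$-invariance is then witnessed by a single element. The relator $r = (a_1a_2)^2 (a_1a_3)^{-2}$ lies in $N$, since both factors equal $b$ in $\Gamma_n$; but the identification $\psi$ that merges $a_2$ into $a_1$ sends it to $\sigma_\psi(r) = (a_1a_1)^2 (a_1a_3)^{-2} = (a_1a_3)^{-2} = b^{-1} \neq e$ in $\Gamma_n$, so $\sigma_\psi(r) \notin N$. Here $n \geq 3$ is used precisely to retain a third generator $a_3$ that keeps the right-hand factor nontrivial after the collapse. Consequently $N$ is not $\mathrm{sS}_n$-invariant, and since $\Gamma_n \neq \{e\}$ and $\Gamma_n \neq \ZZ_2^{*n}$ the associated $G$ is strictly between $\rS_n$ and $O_n^+$; by the reduction of the first paragraph it cannot be easy.

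I expect the main conceptual step to be the reduction itself: recognising that, inside the $\rS_n$-invariant world classified by Theorem~\ref{thm:classification-of-qsubgroups}, ``easy'' corresponds exactly to the $\mathrm{sS}_n$-invariant case, so that non-easiness is produced by a relation which is symmetric under permuting letters yet is destroyed when two letters are identified. The construction of $\Gamma_n$ and the verification are then routine; the only point requiring care is confirming that $\Gamma_n$ is genuinely nontrivial, i.e.\ $b \neq e$, which the explicit order-$2^{n+1}$ model settles.
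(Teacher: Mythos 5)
Your proposal is correct, and its skeleton coincides with the paper's: both proofs reduce the statement, via Theorem~\ref{thm:classification-of-qsubgroups} and Theorem~\ref{thm:classification-easy-quantum-groups-bicrossed-product-case} (with Proposition~\ref{prop:description-hyperoctahedral-quantum-groups} supplying $\primarypart \in \cC$), to exhibiting a normal subgroup $N \leq \ZZ_2^{*n}$ that is invariant under permutations of the generators but not under identifications of letters, and your first paragraph carries out this reduction exactly as the paper does, including the observation that the diagonal subgroup of $\Cstar(\Gamma) \Join \cont(\rS_n)$ recovers $\Gamma$ with its generators. The only genuine divergence is the witness subgroup. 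The paper takes $N = \ker\bigl(\ZZ_2^{*n} \ra \rS_{n+1}\bigr)$ with the generators mapping to transpositions through a common point; $\rS_n$-invariance comes from conjugation inside $\rS_{n+1}$, and the failure of $\mathrm{sS}_n$-invariance is witnessed by a word representing a product of two disjoint, hence commuting, transpositions which, after merging two letters, becomes a $3$-cycle. You instead take $N$ to be the kernel of the surjection onto the finite $2$-group of order $2^{n+1}$ in which every $(a_ia_j)^2$, $i \neq j$, equals a single central involution $b$; invariance under $\rS_n$ is automatic from the symmetric presentation, and $\mathrm{sS}_n$-invariance fails on the relator $(a_1a_2)^2(a_1a_3)^{-2}$, which an identification of letters sends to $(a_1a_3)^{-2} \mapsto b \neq e$. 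Both witnesses are equally valid; yours trades the paper's non-nilpotent symmetric-group target for a small nilpotent one, and its verification (including the cocycle model settling $b \neq e$, which is indeed the one point needing care) is self-contained and arguably cleaner than the paper's, whose concrete word requires some care with the indexing of the transpositions.
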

\begin{proof}
  Let $n \geq 3$.  We exhibit an example of a non-easy quantum group $H_n^{[\infty]} \supset G \supset \rS_n$. By Theorem~\ref{thm:classification-of-qsubgroups} it must satisfy $\cont(G) \cong \Cstar(\Gamma) \Join \cont(S_n)$ for some $\rS_n$-invariant quotient $\ZZ_2^{*n} \thra \Gamma$.  So by Theorem~\ref{thm:classification-easy-quantum-groups-bicrossed-product-case} it suffices to exhibit an $\rS_n$-invariant, normal subgroup of $\ZZ_2^{*n}$ that is not $\mathrm{sS}_n$-invariant.   

  Let $\pi: \ZZ_2^{*n} \ra \rS_{n+1}$ be the homomorphism satisfying $\pi(a_i) = (1 i)$.  It is surjective, since $(i j) = (1 i)(1 j)(1 i)$.  If $\phi: \{1, \dotsc, n\} \ra \{1, \dots, n\}$ is any map, then $\pi(a_{\phi(i)}) = (1 \phi(i))$.  So if $w \in \ZZ_2^{*n}$ maps to a product of cycles $\pi(w) = (i_1  \dotsc i_{k_1}) \dotsm \, (i_{k_l-1} \dotsm i_{k_l})$, then $\pi(\sigma_\phi(w)) = (\phi(i_1) \phi(i_2) \dotsc \phi(i_{k_1})) \dotsm \, (\phi(i_{k_l-1}) \dotsm \phi(i_{k_l}))$.  In particular, $\ker \pi$ is $\rS_n$-invariant.  We show that it is not $\mathrm{sS}_n$-invariant.  Indeed, take for $\phi:\{1, \dotsc, n\} \ra \{1, \dotsc, n\}$ the map defined by $\phi(4) = 1$, $\phi(i) = i$ for all $i \neq 4$.  We have $(12)(34)(12)(34) = \id$, but applying $\phi$, we obtain
  \begin{equation*}
    (12)(31)(12)(31)
    =
    (132)
    \neq
    \id
    \eqstop
  \end{equation*}
  So $\ker \pi$ is not $\mathrm{sS}_n$-invariant.  This completes the proof of the theorem.
\end{proof}

\subsection{The hyperoctahedral series}
\label{sec:hyperoctahedral-series}

In \cite{banicacurranspeicher09_2}, the hyperoctahedral series and the higher hyperoctahedral series were defined.  We describe these quantum groups in the context of our classification results.

\begin{definition}
  The \emph{higher hyperoctahedral series} is the sequence of compact matrix quantum groups defined by
  \begin{multline*}
    \cont(H_n^{[s]}) = \Cstar(u_{ij}, 1 \leq i,j \leq n \amid
    u = \overline{u} \text{ unitary, } u_{ij} \text{ partial isometries and } \\
    (u_{ij}u_{kl})^s = (u_{kl}u_{ij})^s \text{ for all } i,j,k,l \in \{1, \dotsc, n\} )
    \eqstop
  \end{multline*}
  Similarly, the \emph{hyperoctahedral series} is defined as
  \begin{multline*}
    \cont(H_n^{(s)}) = \Cstar(u_{ij}, 1 \leq i,j \leq n \amid
    u = \overline{u} \text{ unitary, } u_{ij} \text{ partial isometries, } \\
    (u_{ij}u_{kl})^s = (u_{kl}u_{ij})^s \text{ for all } i,j,k,l \in \{1, \dotsc, n\} \\
    \text{ and } abc = cba \text{ for all } a,b,c \in \{u_{ij} \amid i,j \in \{1, \dotsc, n\})
    \eqstop
  \end{multline*}
\end{definition}

All members of the hyperoctahedral series and the higher hyperoctahedral series are easy quantum groups such that $u_{ij}^2$ are central projections in their underlying \Cstar-algebra.  Hence these quantum groups are in the scope of this article.

\begin{proposition}
  \label{prop:description-hyperocthedral series}
  We have the following isomorphisms of CMQGs.
  \begin{itemize}
  \item $\cont(H_n^{[s]}) \cong \Cstar((\ZZ_2^{*n})/{(a_ia_j)^s = e}) \Join \cont(\rS_n)$.
  \item $\cont(H_n^{(s)}) \cong \Cstar((\ZZ_2^{*n})/{(a_ia_j)^s = e \text{ and } a_ia_ja_k = a_ka_ja_i}) \Join \cont(\rS_n)$.
  \end{itemize}
\end{proposition}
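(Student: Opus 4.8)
The plan is to recognise both series as \emph{group-theoretical} easy quantum groups and to read off the associated reflection groups as their diagonal subgroups. The remark preceding the proposition already records that every $H_n^{[s]}$ and every $H_n^{(s)}$ is an easy quantum group whose entries satisfy that $u_{ij}^2$ is a central projection; by Proposition~\ref{prop:description-hyperoctahedral-quantum-groups}(ii) this means $\primarypart$ lies in the associated category of partitions $\cC$. Hence Theorem~\ref{thm:classification-easy-quantum-groups-bicrossed-product-case} applies and yields $\cont(H_n^{[s]}) \cong \Cstar(\Gamma) \Join \cont(\rS_n)$, where $\Gamma = \ZZ_2^{*n}/F_n(\cC)$ is the diagonal subgroup and the fundamental corepresentation is identified with $(u_{g_i} p_{ij})$; the same holds for $H_n^{(s)}$. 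The whole problem therefore reduces to identifying these two diagonal subgroups with the groups displayed in the statement.

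First I would compute $\Gamma$ directly from the \Cstar-presentation. Since $\cont(H_n^{[s]})$ is by definition a universal \Cstar-algebra, it is in its maximal version, so by Proposition~\ref{prop:diagonal-subgroups-of-maximal-versions} the quotient by the relations $u_{ij}=0$ for $i \neq j$ is exactly $\Cstar(\Gamma)$ in its maximal version, generated by the elements $g_i = u_{ii}$ in the sense of Definition~\ref{def:diagonal-subgroup}. Orthogonality of $u$ gives $\sum_k u_{ik}^2 = 1$, which on the diagonal collapses to $g_i^2 = e$, so $\Gamma$ is a quotient of $\ZZ_2^{*n}$ and one may substitute $u_{ij} \mapsto \delta_{ij} g_i$ into the remaining relations. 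In $(u_{ij}u_{kl})^s = (u_{kl}u_{ij})^s$ only the terms with $i=j$ and $k=l$ survive, producing $(g_ig_k)^s = (g_kg_i)^s$; for $H_n^{(s)}$ the extra relation $abc = cba$ among the $u_{ij}$ collapses to $g_ig_jg_k = g_kg_jg_i$. Rewriting these using $g_i^{-1}=g_i$ puts them in the form of the relators defining the quotients of $\ZZ_2^{*n}$ appearing in the statement, and these relator sets are visibly stable under permutation of indices, as is needed for the $\rS_n$-action underlying the semi-direct product. Equivalently, one can route the same computation through Lemma~\ref{lem:presentation-of-diagonal-subgroups}, reading off the words $w(p,i) \in F_n(\cC)$ attached to the generating partitions.

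The hard part will not be the substitution itself but the claim that the restricted relations constitute a \emph{complete} presentation of $\Gamma$: one must verify both that the displayed relations hold and that they already force every relation introduced by the quotient $u_{ij}=0$. This is exactly what Proposition~\ref{prop:diagonal-subgroups-of-maximal-versions} provides, since it guarantees that the diagonal quotient is the maximal group \Cstar-algebra of the group universally presented by the restricted relations, so no further identifications are created. A secondary, purely group-theoretic point is to rewrite $(g_ig_k)^s = (g_kg_i)^s$ into the precise relator occurring in the quotient of the statement, using $g_ig_k = (g_kg_i)^{-1}$, and to confirm that the resulting normal subgroup is the intended one. Once these two checks are in place, feeding $\Gamma$ back into Theorem~\ref{thm:classification-easy-quantum-groups-bicrossed-product-case} delivers the two asserted isomorphisms of compact matrix quantum groups.
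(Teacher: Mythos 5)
Your overall framework (recognise the quantum groups as group-theoretical, apply Theorem~\ref{thm:classification-easy-quantum-groups-bicrossed-product-case}, identify the diagonal subgroup) is the right one, but the identification step fails, and it fails precisely at the point you dismiss as ``a secondary, purely group-theoretic point''. Substituting $u_{ij} \mapsto \delta_{ij} g_i$ into the displayed $s$-mixing relation gives $(g_ig_k)^s = (g_kg_i)^s$, and since $g_i^{-1} = g_i$ we have $g_kg_i = (g_ig_k)^{-1}$, so this relation is equivalent to $(g_ig_k)^{2s} = e$ and \emph{not} to $(g_ig_k)^s = e$. These are genuinely different quotients: already for $n = 2$ the group $\langle a_1, a_2 \amid a_i^2, (a_1a_2)^s \rangle$ is dihedral of order $2s$, while $\langle a_1, a_2 \amid a_i^2, (a_1a_2)^{2s} \rangle$ is dihedral of order $4s$. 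Carried out correctly, your computation identifies the diagonal subgroup of the universal \Cstar-algebra on the displayed relations as $\ZZ_2^{*n}/\langle\langle (a_ia_j)^{2s} \rangle\rangle$, so your route would ``prove'' $\cont(H_n^{[s]}) \cong \Cstar(\ZZ_2^{*n}/(a_ia_j)^{2s} = e) \Join \cont(\rS_n)$, contradicting the statement for every finite $s$; the same defect occurs for $H_n^{(s)}$.

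The underlying reason is that the relation $(u_{ij}u_{kl})^s = (u_{kl}u_{ij})^s$ is strictly weaker than the relation encoded by the partition $h_s$ (two interlaced blocks of size $s$): in any semi-direct product $\Cstar(\Gamma) \Join \cont(\rS_n)$ one computes $(u_{ij}u_{kl})^s = u_{(g_ig_k)^s} \ot p_{ij}p_{kl}$, so $h_s \in \cC$ forces $(g_ig_k)^s = e$, equivalently $(u_{ij}u_{kl})^s = u_{ij}^2u_{kl}^2$, whereas $(ab)^s = (ba)^s$ only forces $(g_ig_k)^{2s} = e$. This is why the paper's proof never touches the \Cstar-presentation at all: it quotes \cite{banicacurranspeicher09_2} for the fact that the categories of partitions are $\langle \primarypart, h_s \rangle$ and $\langle \vierpart, h_s, \halflibpart \rangle$, reads off via Lemma~\ref{lem:presentation-of-diagonal-subgroups} that the diagonal subgroup is $\ZZ_2^{*n}/F_n(\cC)$ with $F_n(\cC)$ generated as a normal subgroup by the words $w(h_s,(i,j,i,j,\dotsc)) = (a_ia_j)^s$, and then applies Theorem~\ref{thm:classification-easy-quantum-groups-bicrossed-product-case}. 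Note also that your appeal to the remark preceding the proposition (centrality of $u_{ij}^2$) cannot be extracted from the displayed relations either: at $s = \infty$ those relations present $\cont(H_n^+)$, whose category $\langle \vierpart \rangle$ contains no crossing partition, so $u_{ij}^2$ is not central there; that step, too, secretly relies on the partition-theoretic description in \cite{banicacurranspeicher09_2}. To repair your argument you must either argue from the categories of partitions, as the paper does, or start from a presentation whose mixing relation reads $(u_{ij}u_{kl})^s = u_{ij}^2u_{kl}^2$.
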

\begin{proof}
  By \cite{banicacurranspeicher09_2}, the higher hyperoctahedral series is associated with the category of partitions generated by $\primarypart$ and
  \setlength{\unitlength}{0.5cm}
  \begin{center}
    \begin{picture}(13,4)
      \put(-1,1.5){$h_s\;=$}
      \put(-0.1,1){\uppartiii{2}{1}{3}{5}}
      \put(5,3){\line(1,0){1}}
      \put(6.75,2.95){$\ldots$}
      \put(-0.1,1){\uppartiii{1}{2}{4}{6}}
      \put(6,2){\line(1,0){1}}
      \put(7.75,1.95){$\ldots$}
      \put(-0.1,1){\uppartii{1}{10}{12}}
      \put(8.65,3){\line(1,0){0.5}}
      \put(-0.1,1){\uppartii{2}{9}{11}}
      \put(9.5,2){\line(1,0){1}}
      \put(12.25,1){.}
    \end{picture}
  \end{center}
  The hyperoctahedral series is associated with the categories $\langle \vierpart, h_s, \halflibpart \rangle$, which also contain $\primarypart$.  So we can apply Lemma \ref{lem:presentation-of-diagonal-subgroups} and Theorem \ref{thm:classification-easy-quantum-groups-bicrossed-product-case} in order to finish the proof.
\end{proof}

The next theorem describes some operator algebraic properties of quantum groups in the hyperoctahedral series.  We refer the reader to Section \ref{sec:operator-algebraic-properties} for the relevant notations.
\begin{theorem}
  \label{thm:properties-hyperoctahedral-series}
  The quantum groups $\cont(H_n^{(s)})$ in the hyperoctahedral series are finite index extensions of $\Cstar(\ZZ_s^{\oplus n-1} \rtimes \rS_n)$, where a permutation $\sigma \in \rS_n$ acts by
  \begin{equation*}
    \sigma(b_i) = 
    \begin{cases}
      b_{\sigma(i)} & \sigma(n) = n \\
      b_{\sigma(n)} & \sigma(i) = n \\
      b_{\sigma(n)}b_{\sigma(i)} & \sigma(n) \neq n \neq \sigma(i)
    \end{cases}
  \end{equation*}
  on the generators $b_1, \dotsc, b_{n-1}$ of $\ZZ_s^{\oplus n-1}$.  In particular, $\wh{H_n^{(s)}}$ is an amenable quantum group, or equivalently $\Linfty(H_n^{(s)})$ is an amenable von Neumann algebra.

  For all $s \geq 3$, $n \geq 3$ the discrete dual quantum groups $\wh{H_n^{[s]}}$ of the higher hyperoctahedral series are not amenable, but weakly amenable and they have the Haagerup property.  Their von Neumann algebras $\Linfty(H_n^{[s]})$ are strongly solid.
\end{theorem}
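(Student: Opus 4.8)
The plan is to reduce every assertion to a statement about the discrete groups underlying the semi-direct product descriptions of Proposition \ref{prop:description-hyperocthedral series}, and then to transport the operator algebraic properties between the compact quantum group, its von Neumann algebra, and the relevant discrete group. Two structural facts drive the argument. First, for a semi-direct product quantum group the von Neumann algebraic version is an ordinary group von Neumann algebra: $\Linfty(\Cstar(\Lambda) \Join \cont(\rS_n)) \cong \rL(\Lambda \rtimes \rS_n)$, where $\Lambda \rtimes \rS_n$ is the genuine discrete group obtained by letting $\rS_n$ permute the distinguished generators of $\Lambda$. This is the crossed-product description of the semi-direct product construction; it specialises to $\Linfty(\mathbb{T}^n \rtimes \rS_n) \cong \rL(\ZZ^{\oplus n}) \rtimes \rS_n \cong \rL(\ZZ^{\oplus n} \rtimes \rS_n)$ in the abelian example of Section \ref{sec:semi-direct-products}. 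Second, since $\rS_n$ is finite, $\Lambda$ sits inside $\Lambda \rtimes \rS_n$ as a normal subgroup of index $n!$, so that $\rL(\Lambda) \subseteq \rL(\Lambda \rtimes \rS_n)$ is a finite-index inclusion and all approximation properties in play are insensitive to passing between the two.

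For the hyperoctahedral series, Proposition \ref{prop:description-hyperocthedral series} gives $\Lambda = (\ZZ_2^{*n})/\langle (a_ia_j)^s, \, a_ia_ja_k = a_ka_ja_i\rangle$. Writing $c_j = a_1 a_j$ for $j = 2, \dotsc, n$, these elements generate the even subgroup $E \leq \Lambda$ of index two; the relation $a_ia_ja_k = a_ka_ja_i$ translates precisely into commutativity of the $c_j$, while $(a_1a_j)^s = e$ forces $c_j^s = e$, so $E$ is a quotient of $\ZZ_s^{\oplus n-1}$. Exhibiting the evident homomorphism onto $\ZZ_s^{\oplus n-1} \rtimes \ZZ_2$ shows that no further collapse occurs, whence $E \cong \ZZ_s^{\oplus n-1}$ and $\Lambda \cong \ZZ_s^{\oplus n-1} \rtimes \ZZ_2$. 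Rewriting the permutation action $a_n a_i \mapsto a_{\sigma(n)} a_{\sigma(i)}$ in the generators $b_i = a_n a_i$ yields the displayed action formula, a routine computation. As $E$ is $\rS_n$-invariant, $\ZZ_s^{\oplus n-1} \rtimes \rS_n = E \rtimes \rS_n \leq \Lambda \rtimes \rS_n$ has index two, giving the claimed finite-index extension. Since $\ZZ_s^{\oplus n-1} \rtimes \rS_n$ is virtually abelian (indeed finite when $s < \infty$) it is amenable, hence so is $\Lambda \rtimes \rS_n$, and therefore $\Linfty(H_n^{(s)}) \cong \rL(\Lambda \rtimes \rS_n)$ is an amenable von Neumann algebra; by Theorem \ref{thm:amenability-for-quantum-groups} the dual $\wh{H_n^{(s)}}$ is amenable.

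For the higher hyperoctahedral series the relevant group is $\Lambda' = (\ZZ_2^{*n})/\langle (a_ia_j)^s\rangle$. For $s = \infty$ this is $\ZZ_2^{*n}$, which is virtually free; for finite $s \geq 3$ it is a Coxeter group all of whose rank-two special subgroups $\langle a_i, a_j\rangle$ are finite and whose rank-three special subgroups are hyperbolic triangle groups, so that by Moussong's hyperbolicity criterion $\Lambda'$ is word-hyperbolic. In either case $\Lambda'$ is non-elementary word-hyperbolic, hence non-amenable (for instance it retracts onto a non-amenable two-generated subgroup). Being a Coxeter group it has the Haagerup property (Bozejko, Januszkiewicz and Spatzier), and being word-hyperbolic it is weakly amenable with Cowling--Haagerup constant one, i.e. has the CMAP (Ozawa). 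All three properties are commensurability invariants, so they pass to the finite-index overgroup $\Lambda' \rtimes \rS_n$. By Choda's Theorem \ref{thm:H-is-operator-algebraic} and Haagerup's Theorem \ref{thm:CMAP-is-operator-algebraic}, the group von Neumann algebra $\rL(\Lambda' \rtimes \rS_n) \cong \Linfty(H_n^{[s]})$ then has the Haagerup property and the $\Wstar$-CCAP, while it is non-amenable because it contains the non-amenable $\rL(\Lambda')$ as a von Neumann subalgebra. Applying Theorems \ref{thm:amenability-for-quantum-groups}, \ref{thm:H-for-quantum-groups} and \ref{thm:CMAP-for-quantum-groups} transports non-amenability, the Haagerup property and weak amenability to the discrete dual $\wh{H_n^{[s]}}$.

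It remains to prove that $\Linfty(H_n^{[s]}) \cong \rL(\Lambda' \rtimes \rS_n)$ is strongly solid. Here I would argue directly at the level of the discrete group: as a finite extension of the word-hyperbolic group $\Lambda'$, the group $\Lambda' \rtimes \rS_n$ is itself word-hyperbolic, hence bi-exact, and it is weakly amenable by the previous paragraph; strong solidity of its group von Neumann algebra then follows from the structure theory of Ozawa and Popa \cite{ozawapopa10-cartan1} and of Chifan and Sinclair for von Neumann algebras of weakly amenable, bi-exact groups. I expect the main obstacle to be the two structural identifications underpinning the whole argument: establishing the crossed-product description $\Linfty(\Cstar(\Lambda) \Join \cont(\rS_n)) \cong \rL(\Lambda \rtimes \rS_n)$ of the semi-direct product quantum group, and the group-theoretic analysis of $\Lambda$ and $\Lambda'$, in particular verifying word-hyperbolicity of the Coxeter group $\Lambda'$. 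Once these are in hand, the remaining assertions are formal consequences of the transfer theorems recalled in Section \ref{sec:operator-algebraic-properties}.
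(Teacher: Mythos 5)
Your high-level strategy --- reduce everything to discrete group theory and then transfer operator algebraic properties through finite-index inclusions --- is also the paper's strategy, and your analysis of the group $\Lambda$ for $H_n^{(s)}$ (the even subgroup $\ZZ_s^{\oplus n-1}$, the displayed action formula) agrees with the paper's proof. But the structural identification carrying all of your transfers is false. By Proposition \ref{prop:semi-direct-products} the underlying \Cstar-algebra of $\Cstar(\Lambda) \Join \cont(\rS_n)$ is the \emph{ordinary tensor product} $\Cstar(\Lambda) \ot \cont(\rS_n)$; the semi-direct product structure resides only in the comultiplication. Hence $\Linfty(\Cstar(\Lambda) \Join \cont(\rS_n)) \cong \rL(\Lambda) \vnt \linfty(\rS_n) \cong \bigoplus_{\sigma \in \rS_n} \rL(\Lambda)$, and \emph{not} the crossed product $\rL(\Lambda) \rtimes \rS_n \cong \rL(\Lambda \rtimes \rS_n)$. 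Already for trivial $\Lambda$ these differ ($\CC^{n!}$ versus the noncommutative algebra $\rL(\rS_n)$ for $n \geq 3$), and your own consistency check fails: $\Linfty(\mathbb{T}^n \rtimes \rS_n)$ is commutative, whereas $\rL(\ZZ^{\oplus n} \rtimes \rS_n)$ is not. You are conflating the function algebra of the compact group $\wh{\Lambda} \rtimes \rS_n$ with the group von Neumann algebra of the discrete group $\Lambda \rtimes \rS_n$; these are dual objects, not isomorphic ones. Consequently none of your arguments lands on the algebras $\Linfty(H_n^{(s)})$, $\Linfty(H_n^{[s]})$ that the theorem is about. The repair is exactly the paper's route: since $\linfty(\rS_n)$ is finite dimensional and the even subgroup $E$ has index two, the inclusions $\rL(E) \subset \rL(\Lambda) \subset \rL(\Lambda) \vnt \linfty(\rS_n) = \Linfty(G)$ have finite index, so (non-)amenability, the Haagerup property and the \Wstar-CCAP pass back and forth by the stability results of Section \ref{sec:operator-algebraic-properties}, and strong solidity by \cite[Proposition 5.2]{houdayer10-non-free-gropup}.

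Independently of this, your treatment of $H_n^{[s]}$ breaks at $s = 3$, even after the above repair. You keep the Coxeter group $\Lambda' = \ZZ_2^{*n}/\langle\langle (a_ia_j)^s \rangle\rangle$ of Proposition \ref{prop:description-hyperocthedral series} and deduce word-hyperbolicity from Moussong's criterion on the grounds that the rank-three special subgroups are hyperbolic triangle groups. For $s = 3$ they are not: the $(3,3,3)$ triangle Coxeter group is the affine group $\widetilde{A}_2$, which is virtually $\ZZ^2$. Thus for $s = 3$ the group $\Lambda'$ contains $\ZZ^2$ and is not hyperbolic for any $n \geq 3$, and for $(s,n) = (3,3)$ it is even amenable; your non-amenability, bi-exactness and strong solidity arguments therefore collapse at a parameter value the theorem covers. (The Haagerup property of Coxeter groups, due to Bo\.{z}ejko--Januszkiewicz--Spatzier, and their weak amenability do survive; but note that constant-one weak amenability is not a consequence of hyperbolicity --- it holds for Coxeter groups via their actions on $\mathrm{CAT}(0)$ cube complexes.) The paper argues differently here: it passes to the index-two subgroup $E \leq \Lambda'$ of even words, asserts $E \cong \ZZ_s^{*(n-1)}$, and then uses the free-product permanence Theorems \ref{thm:free-products-preserve-H} and \ref{thm:free-products-preserve-CMAP}, non-amenability of $\ZZ_s^{*(n-1)}$, strong solidity of $\rL(E)$ from \cite{chifansinclair13}, and finite-index stability. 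Be aware, though, that your reading exposes a genuine tension: a Reidemeister--Schreier computation for the Coxeter group $\Lambda'$ gives $E \cong \langle b_1, \dotsc, b_{n-1} \mid b_i^s = (b_i^{-1}b_j)^s = e \rangle$, which has strictly more relations than $\ZZ_s^{*(n-1)}$ and which for $(s,n) = (3,3)$ is virtually $\ZZ^2$. So the free-product identification and the presentation of $\Lambda'$ in Proposition \ref{prop:description-hyperocthedral series} cannot both be correct, and a complete proof must first pin down the group $\Lambda'$ exactly; as written, your proposal inherits the Coxeter reading and then demonstrably fails at $s = 3$.
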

\begin{proof}
  Let us first consider $H_n^{(s)}$.  By Proposition \ref{prop:description-hyperocthedral series}, the CMQG $\cont(H_n^{(s)})$ is isomorphic with $\Cstar((\ZZ_2^{*n})/{\{(a_ia_j)^s = e \text{ and } a_ia_ja_k = a_ka_ja_i\}}) \Join \cont(\rS_n)$.  Consider the index 2 subgroup $E$ of $\ZZ_2^{*n}/{\{(a_ia_j)^s = e \text{ and } a_ia_ja_k = a_ka_ja_i\}}$ consisting of words of even length.  It is generated by the elements $b_i = a_na_i$, $i \in \{1, \dotsc, n-1\}$.  We can use the relation $a_ia_ja_k = a_ka_ja_i$ in order to obtain $b_ib_j = a_na_ia_na_j = a_na_ja_na_i = b_jb_i$.  Similarly, $b_ib_j = b_jb_i$ implies $a_ia_ja_ka_l = b_i^{-1}b_jb_k^{-1}b_l = b_k^{-1}b_jb_i^{-1}b_l = a_ka_ja_ia_l$ for all $i,j,k,l \in \{1, \dotsc, n\}$.  So $E$ is the universal group generated by $n-1$ commuting elements of order $s$.  Hence $E \cong \ZZ_s^{\oplus n-1}$ by an isomorphism identifying $b_i$, $i \in \{1, \dotsc, n-1\}$ with the natural generators of $\ZZ_s^{\oplus n-1}$.  Take $\sigma \in \rS_n$.  Then $\sigma(b_i) = \sigma(a_na_i) = a_{\sigma(n)}a_{\sigma(i)} = a_na_{\sigma(n)} a_n a_{\sigma(i)}$.  So $\sigma$ acts on $E$ via
  \begin{equation*}
    \sigma(b_i) = 
    \begin{cases}
      b_{\sigma(i)} & \sigma(n) = n \\
      b_{\sigma(n)} & \sigma(i) = n \\
      b_{\sigma(n)}b_{\sigma(i)} & \sigma(n) \neq n \neq \sigma(i)
      \eqstop
    \end{cases}
  \end{equation*}
  In particular, $\rS_n$ leaves $E$ invariant, so that $\Cstar(E) \Join \cont(\rS_n) \subset \cont(H_n^{(s)})$ is a compact quantum group.  Note that since the Pontryagin dual of $\ZZ_s$ is isomorphic with $\ZZ_s$, we have $\Cstar(E) \Join \cont(\rS_n) \cong {\cont(\ZZ_s^{\oplus n-1} \rtimes \rS_n)}$.  Since $E \leq (\ZZ_2^{*n})/{\{(a_ia_j)^s = e \text{ and } a_ia_ja_k = a_ka_ja_i\}}$ has index 2, $\Linfty(H_n^{(s)})$ is finitely generated as an $\Linfty(\ZZ_s^{\oplus n-1} \rtimes S_n)$-module.  So the inclusion $\Linfty(\ZZ_s^{\oplus n-1} \rtimes \rS_n) \subset \Linfty(H_n^{(s)})$ has finite index, implying that $\Linfty(H_n^{(s)})$ is an amenable von Neumann algebra.  By Theorem \ref{thm:amenability-for-quantum-groups} this implies amenability of the discrete dual $\wh{H_n^{(s)}}$.

Let us now consider $\cont(H_n^{[s]})$.  By Proposition \ref{prop:description-hyperocthedral series} it is isomorphic with ${\Cstar(\ZZ_2^{*n}/{\{(a_ia_j)^s = e \}}) \Join \cont(\rS_n)}$.  Denote by $E \leq \ZZ_2^{*n}/{\{(a_ia_j)^s = e \}}$ the subgroup of words of even length.  It is generated by elements $b_i = a_na_i$, which only satisfy the relations $b_i^s = e$.  Hence $E \cong \ZZ_s^{*n-1}$.  Since $u_{a_i} = \sum_j u_{ij} \in  \mathrm{Pol}(\cont(H_n^{[s]}) \subset \Linfty(H_n^{[s]})$, we have $\rL(E) \subset \Linfty(H_n^{[s]})$.  Note that $E$ is not amenable, because $s,n \geq 3$.  It follows that $\Linfty(H_n^{[s]})$ is not amenable as a von Neumann algebra.  By Theorem \ref{thm:amenability-for-quantum-groups} this implies non-amenability of $\wh{H_n^{(s)}}$.  Moreover, $\rL(E) \subset \Linfty(H_n^{[s]})$ has finite index.  Since $E \cong \ZZ_s^{*n-1}$ has the CMAP by Theorem \ref{thm:free-products-preserve-CMAP} and has the Haagerup property by Theorem \ref{thm:free-products-preserve-H}, it follows that also $\Linfty(H_n^{[s]})$ has the \Wstar-CCAP and the Haagerup property. Finally note that $E$ is a free product of hyperbolic groups, so it is hyperbolic itself.  Furthermore every non-trivial conjugacy class in $E \cong \ZZ_s^{* n-1}$ is infinite, because $s,n \geq 3$.  By \cite{chifansinclair13}, it follows that $\rL(E)$ is a strongly solid von Neumann algebra.  Since $\Linfty(H_n^{[s]})$ contains $\rL(E)$ as a finite index von Neumann subalgebra, \cite[Proposition 5.2]{houdayer10-non-free-gropup} implies that also $\Linfty(H_n^{[s]})$ is strongly solid.  This finishes the proof.
\end{proof}

\subsection{De Finetti theorems for easy quantum subgroups of $\Cstar(\ZZ_2^{*n}) \Join \cont(\rS_n)$}
\label{sec:de-finetti-theorems}

After the work of K{\"o}stler and Speicher in \cite{koestlerspeicher09-de-finetti}, de Finetti theorems became a central piece of the theory of easy quantum groups.  In this section we present a unified de Finetti theorem for all easy quantum subgroups of $\Cstar(\ZZ_2^{*n}) \Join \cont(\rS_n)$.  Unsurprisingly, it needs strong assumptions to yield the desired equivalence between invariance of the distribution of non-commutative random variables $x_1, x_2, \dotsc$ under the natural action of a series of easy quantum groups and independence properties of this distribution.  However, the de Finetti theorem for $H_n^* = H_n^{(\infty)}$ as it is known from \cite{banicacurranspeicher12-finetti} takes its usual form, only demanding a commutation relation between the random variables $x_1, x_2, \dotsc$ in question.  Similarly, one can formulate a simple de Finetti theorem for $H_n^{[\infty]}$.  We describe these two especially interesting cases in Corollary \ref{cor:de-finetti-theorem-for-hyperoctahedral-series} justifying the assumptions of Theorem \ref{thm:de-finitti}.

In the next theorem we use the following notation.  If $w \in \NN^{*n}$ is any word in $n$ letters, then we denote by $e_i(w)$ the exponent of the $i$-th letter in $w$.
\begin{theorem}
  \label{thm:de-finitti}
  Let $\Gamma$ be a non-trivial strongly symmetric reflection group on countably many generators and $K = \ker( \NN^{*\infty} \ra \Gamma) \leq \NN^{*\infty}$ the $\mathrm{sS}_\infty$-invariant subsemigroup of $\NN^{*\infty}$ associated with it.  For $n \in \NN$ denote by $\Gamma_n$ the strongly symmetric reflection groups generated by the first $n$ generators of $\Gamma$.  Let $x_1, x_2, \dotsc$ be a sequence of non-commutative, self-adjoint random variables in a \Wstar-probability space $(M,\phi)$.  Then there is a von Neumann subalgebra $B \subset M$ with $\phi$-preserving expectation $\rE:M \ra B$ such that if $x_1, x_2, \dotsc$ satisfy
  \begin{itemize}
  \item $x_i^2x_j = x_jx_i^2$ for all $i,j \in \NN$ and
  \item $\phi(w(x_1, \dotsc, x_n)) = \phi(\rE(x_1^{e_1(w)}) \dotsm \rE(x_n^{e_n(w)}))$ for all $n \in \NN$ and all words $w \in K$ on $n$ letters,
  \end{itemize}
  then the following are equivalent.
  \begin{enumerate}
  \item $x_1^2, x_2^2, \dotsc, $ are identically distributed and independent with respect to $\rE$ and $\phi(w(x_1, \dotsc, x_n)) = 0$ for all $n \in \NN$ and all $w \in \NN^{*\infty} \setminus K$ in $n$ letters.
  \item For all $n \in \NN$, the distribution of $x_1, \dotsc, x_n$ is invariant under the coaction of the easy quantum group ${\Cstar(\Gamma_n) \Join \cont(\rS_n)}$.
  \end{enumerate}
\end{theorem}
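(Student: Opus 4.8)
The plan is to convert the quantum-group invariance in (ii) into a family of scalar moment identities and then match these against the independence statement in (i). By Theorem~\ref{thm:classification-easy-quantum-groups-bicrossed-product-case} the fundamental corepresentation of $\Cstar(\Gamma_n) \Join \cont(\rS_n)$ is $v = (v_{ij})$ with $v_{ij} = u_{g_i} p_{ij}$, so the relevant coaction on the linear span of $x_1, \dotsc, x_n$ is $\beta(x_j) = \sum_i x_i \ot u_{g_i} p_{ij}$, and invariance of the distribution means $(\phi \ot \id)\beta(x_{j_1} \dotsm x_{j_k}) = \phi(x_{j_1} \dotsm x_{j_k})\,1$ for every word. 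First I would expand the left-hand side, using that the two tensor legs $\Cstar(\Gamma_n)$ and $\cont(\rS_n)$ commute, as
\begin{equation*}
  (\phi \ot \id)\beta(x_{j_1} \dotsm x_{j_k}) = \sum_{i_1, \dotsc, i_k} \phi(x_{i_1} \dotsm x_{i_k})\, u_{g_{i_1}} \dotsm u_{g_{i_k}} \ot p_{i_1 j_1} \dotsm p_{i_k j_k} \eqstop
\end{equation*}

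The key step is to evaluate this identity against the characters of the commutative algebra $\cont(\rS_n)$, namely the point evaluations $\chi_\sigma$, $\sigma \in \rS_n$, which separate points. Since $\chi_\sigma(p_{ij}) = \delta_{i, \sigma(j)}$, the sum over $i$ collapses to $i_l = \sigma(j_l)$, and invariance becomes equivalent to the family of identities
\begin{equation*}
  \phi\bigl(x_{\sigma(j_1)} \dotsm x_{\sigma(j_k)}\bigr)\, u_{g_{\sigma(j_1)}} \dotsm u_{g_{\sigma(j_k)}} = \phi\bigl(x_{j_1} \dotsm x_{j_k}\bigr)\, 1 \qquad \text{in } \Cstar(\Gamma_n)
\end{equation*}
for all $\sigma$ and all words. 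Because distinct group elements are linearly independent in $\Cstar(\Gamma_n)$, and because $u_{g_{\sigma(j_1)}} \dotsm u_{g_{\sigma(j_k)}} = u_{g_{\sigma(j_1)} \dotsm g_{\sigma(j_k)}} = 1$ exactly when $a_{j_1} \dotsm a_{j_k} \in K$ (here I use the $\rS_n$-invariance of $K$), this single computation shows that (ii) is equivalent to the conjunction of (A) $\phi(w(x)) = 0$ for every word $w \in \NN^{*\infty} \setminus K$, and (B) the functional $\phi$ is $\rS_n$-invariant on words $w \in K$.

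It then remains to identify the conjunction of (A) and (B) with (i) under the standing hypotheses. Condition (A) is literally the vanishing clause of (i), so the task reduces to showing that, given (A), the identical distribution and $\rE$-independence of $x_1^2, x_2^2, \dotsc$ is equivalent to (B). Here the algebraic hypotheses do the decisive work: the first makes every square $y_i = x_i^2$ central, so that the conditional expectations $\rE(y_i^m)$ are central in $B$ (indeed $b\,\rE(y_i^m) = \rE(b\,y_i^m) = \rE(y_i^m b) = \rE(y_i^m)\,b$ for $b \in B$) and the squares behave as a commuting family; the second expresses every $K$-moment as the product $\phi(\rE(x_1^{e_1(w)}) \dotsm \rE(x_n^{e_n(w)}))$, depending only on the exponent vector of $w$. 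Taking $B$ to be the invariant (tail) subalgebra and $\rE$ the associated $\phi$-preserving conditional expectation, the invariance of $\rE$ under the permutation action yields identical distribution, while (B) becomes precisely the exchangeability of the joint $\phi$-distribution of the factorized moments; the equivalence of this exchangeability with $\rE$-independence of the squares is then the $B$-valued de Finetti statement, obtained by the reverse-martingale arguments of \cite{koestlerspeicher09-de-finetti, banicacurranspeicher12-finetti}.

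I expect the main obstacle to be this last equivalence, and in particular the implication (ii) $\Rightarrow$ (i): passing from the mere hyperoctahedral exchangeability recorded in (B) to genuine operator-valued independence of the squares requires correctly identifying $B$ as the tail algebra, establishing the reverse-martingale convergence that produces $\rE$, and controlling the interplay between the even and the odd exponents that occur in words of $K$ (the even ones being governed by the central squares, the odd ones by first-moment data pinned down by the invariance of $\rE$). The converse implication (i) $\Rightarrow$ (ii) should be comparatively routine, since independence and identical distribution render the factorized moments of the second hypothesis symmetric, giving (B), while (A) is assumed outright.
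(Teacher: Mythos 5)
Your reduction of (ii) to the two scalar conditions (A) and (B) is correct, and your character-evaluation argument (applying $\id \ot \chi_\sigma$ and using linear independence of group elements in $\Cstar(\Gamma_n)$) is a clean equivalent of what the paper does by multiplying the coaction identity with the minimal projections $p_{\sigma(1)1} \dotsm p_{\sigma(n)n}$. Your treatment of (B) $\Rightarrow$ [squares i.i.d.] is also fine in outline, though heavier than necessary: since the hypothesis $x_i^2x_j = x_jx_i^2$ makes the squares a commuting exchangeable family, the paper simply quotes the \emph{classical} de Finetti theorem relative to the tail algebra $B = \bigcap_n \Wstar(x_n^2, x_{n+1}^2, \dotsc)$; no noncommutative reverse-martingale machinery from \cite{koestlerspeicher09-de-finetti,banicacurranspeicher12-finetti} is needed.

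The genuine gap is in the direction you declare ``comparatively routine,'' namely [(A) $+$ squares i.i.d.] $\Rightarrow$ (B). Words in $K$ need \emph{not} have all exponents even: for instance $\Gamma = \ZZ_2$ (all generators identified) is a non-trivial strongly symmetric reflection group and $a_1a_2 \in K$. For such a word, (B) demands $\phi(x_1x_2) = \phi(x_2x_3)$, and via the standing hypothesis this becomes $\phi(\rE(x_1)\rE(x_2)) = \phi(\rE(x_2)\rE(x_3))$ --- a statement about \emph{odd} conditional moments, about which (i) (identical $\rE$-distribution of the \emph{squares}) says nothing. Identical distribution of the squares only symmetrises the even-exponent data, so your ``factorized moments are symmetric'' step fails exactly on the odd exponents. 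The paper closes this hole with a separate argument: condition (A) implies that $\psi$ respects the natural $\Gamma_n$-grading of $\CC\langle X_1, X_2, \dotsc\rangle$, the GNS construction then grades $M$, and since $B$ is purely $e$-graded one gets $\rE(w(x_1, \dotsc, x_n)) = 0$ for \emph{all} $w \notin K$; in particular $\rE(x_i^{2k+1}) = 0$ because $a_i^{2k+1} \equiv a_i \notin \ker(\ZZ_2^{*n} \ra \Gamma_n)$ --- this is precisely where the non-triviality of $\Gamma$ enters, an assumption your proposal never uses, which is a telltale sign of the missing step. Only after this does one know that \emph{all} the $x_i$ are identically $\rE$-distributed (even moments from the squares, odd moments all zero), and then (B) follows from the standing hypothesis and commutativity of $B$. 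Relatedly, your remark attributing the even/odd interplay to the implication (ii) $\Rightarrow$ (i) misplaces the difficulty: it belongs to (i) $\Rightarrow$ (ii), and it is an orthogonality/grading argument, not something ``pinned down by the invariance of $\rE$.''
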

\begin{proof}
We may assume that $M = \Wstar(x_1, x_2, \dotsc)$.  Let $B = \bigcap_{n \geq 1} \Wstar(x_n^2, x_{n+1}^2, \dotsc)$ and note that $B$ lies in the centre of $M$.  In particular, there is a $\phi$-preserving normal conditional expectation $\rE:M \ra B$.

  Consider the coaction $\alpha_n$ of $\Cstar(\Gamma_n) \Join \cont(\rS_n)$ on $\CC\langle X_1, X_2, \dotsc \rangle$ given by 
  \begin{equation*}
    \alpha_n(X_i) = \sum_j X_j \ot u_{ji} = \sum_j X_j \ot u_{g_j} p_{ji}
  \end{equation*}
  for $i \leq n$ and $\alpha(X_i) = X_i \ot 1$ for $i > n$.
  Denote by $\psi$ the state on $\CC\langle X_1, X_2, \dotsc  \rangle$ given by $\psi(w(X_1, \dotsc, X_n)) = \phi(w(x_1, \dotsc, x_n))$ for all $w \in\NN^{*n}$ and all $n \in \NN$.
  Then the distribution of $x_1, x_2, \dotsc$ is invariant under the coaction of $\Cstar(\Gamma_n) \Join \cont(\rS_n)$ if and only if for all indexes $i_1, \dotsc, i_l \in \{1, \dotsc, n\}$ we have
  \begin{equation}
    \label{eq:invarance-start}
    \psi(X_{i_1} \dotsm \, X_{i_l})
    =
    \sum_{j_1, \dotsc, j_l \in \{1, \dotsc, n\}} \psi(X_{j_1} \dotsm \, X_{j_l}) u_{g_{j_1} \dotsm g_{j_l}} p_{j_1i_1} \dotsm \, p_{j_l i_l}
    \eqstop
  \end{equation}
  We are going to manipulate the right hand side of this equation.  First note that we have ${p_{j_1i_1} \dotsm \, p_{j_l i_l} \neq 0}$ if and only if there is $\sigma \in \rS_n$ such that $j_k = \sigma(i_k)$ for all $k \in \{1, \dotsc, l\}$.  Moreover, if $\sigma|_{\{i_1, \dotsc, i_l\}} = \rho|_{\{i_1, \dotsc, i_l\}}$ for $\sigma, \rho \in \rS_n$, then 
  \begin{equation*}
    \psi(X_{\sigma(i_1)} \dotsm \, X_{\sigma(i_l)}) u_{g_{\sigma(i_1)} \dotsm g_{\sigma(i_l)}} p_{\sigma(i_1)i_1} \dotsm \, p_{\sigma(i_l) i_l}
    =
    \psi(X_{\rho(i_1)} \dotsm \, X_{\rho(i_l)}) u_{g_{\rho(i_1)} \dotsm g_{\rho(i_l)}} p_{\rho(i_1)i_1} \dotsm \, p_{\rho(i_l) i_l}
    \eqstop
  \end{equation*}
  Let $L$ be the cardinality of $\{i_1, \dotsc, i_l\}$.  Then
  \begin{multline}
    \label{eq:reformulate-1}
    \sum_{j_1, \dotsc, j_l \in \{1, \dotsc, n\}} \psi(X_{j_1} \dotsm \, X_{j_l}) u_{g_{j_1} \dotsm g_{j_l}} p_{j_1i_1} \dotsm \, p_{j_l i_l} \\
    =
    \frac{1}{(n-L)!}
    \sum_{\sigma \in \rS_n} \psi(X_{\sigma(i_1)} \dotsm \, X_{\sigma(i_l)}) u_{g_{\sigma(i_1)} \dotsm g_{\sigma(i_l)}} p_{\sigma(i_1)i_1} \dotsm \, p_{\sigma(i_l) i_l}
    \eqstop
  \end{multline}
  Let $i_{l + 1}, \dotsc, i_{l+l'}$ be an enumeration of $\{1, \dotsc, n\} \setminus \{i_1, \dotsc, i_l\}$.  Using $p_{j'i}p_{ji} = 0$ for all ${i, j , j' \in \{1, \dotsc, n\}}$, $j \neq j'$, we see that
  \begin{equation}
    \label{eq:reformulate-2}
    \begin{aligned}
    & \quad
    \sum_{\sigma \in \rS_n} \psi(X_{\sigma(i_1)} \dotsm \, X_{\sigma(i_l)}) u_{g_{\sigma(i_1)} \dotsm g_{\sigma(i_l)}} p_{\sigma(i_1)i_1} \dotsm \, p_{\sigma(i_l) i_l} \\
    & =
    \bigl (\sum_{\sigma \in \rS_n} \psi(X_{\sigma(i_1)} \dotsm \, X_{\sigma(i_l)}) u_{g_{\sigma(i_1)} \dotsm g_{\sigma(i_l)}} p_{\sigma(i_1)i_1} \dotsm \, p_{\sigma(i_l) i_l} \bigr ) \bigl (\sum_{j_{l+1}, \dotsc, j_{l + l'} \in \{1, \dotsc, n\}} p_{j_{l+1} i_{l+1}} \dotsm \, p_{j_{l+l'} i_{l+l'}} \bigr ) \\
    & =
    (n - L)!  \sum_{\sigma \in \rS_n} \psi(X_{\sigma(i_1)} \dotsm \, X_{\sigma(i_l)}) u_{g_{\sigma(i_1)} \dotsm g_{\sigma(i_l)}} p_{\sigma(i_1)i_1} \dotsm \, p_{\sigma(i_l) i_l} p_{\sigma(i_{l+1})i_{l+1}} \dotsc p_{\sigma(i_{l+l'}) i_{l+l'}}
    \eqstop
    \end{aligned}
  \end{equation}
  We can now use $p_{ji}^2 = p_{ji}$ for all $i,j \in \{1, \dotsc, n\}$ with the equations (\ref{eq:reformulate-1}) and (\ref{eq:reformulate-2}) in order to reformulate the right hand side of (\ref{eq:invarance-start}) and obtain
  \begin{equation*}
    \sum_{j_1, \dotsc, j_l \in \{1, \dotsc, n\}} \psi(X_{j_1} \dotsm \, X_{j_l}) u_{g_{j_1} \dotsm g_{j_l}} p_{j_1i_1} \dotsm \, p_{j_l i_l}
    =
    \sum_{\sigma \in \rS_n} \psi(X_{\sigma(i_1)} \dotsm \, X_{\sigma(i_l)}) u_{g_{\sigma(i_1)} \dotsm g_{\sigma(i_l)}} p_{\sigma(1) 1} \dotsm \, p_{\sigma(n) n}
    \eqstop
  \end{equation*}

  Using this equation, we see that the distribution of $x_1, x_2, \dotsc$ is invariant under the action of $\Cstar(\Gamma_n) \Join \cont(\rS_n)$ if and only if
  \begin{equation}
    \label{eq:invariance}
    \psi(w(X_1, \dotsc, X_n))
    =
    \sum_{\sigma \in \rS_n} \psi(w(X_{\sigma(1)}, \dotsc, X_{\sigma(n)})) u_{w(g_{\sigma(1)}, \dotsc, g_{\sigma(n)}} p_{\sigma(1) 1} \dotsm \, p_{\sigma(n) n}
    \eqcomma
  \end{equation}
  for all words $w \in \NN^{*n}$.

  Assume (i) and take $n \in \NN$.  If $w \in \NN^{*n} \setminus K$ is a word on the first $n$ letters $X_1, \dotsc, X_n$, then $\psi(w(X_1, \dotsc, X_n)) = 0$.  Furthermore, for every $\sigma \in \rS_n$, there is $w' \in \NN^{*n}$ such that $w(X_{\sigma(1)}, \dotsc, X_{\sigma(n)}) = w'(X_1, \dotsc, X_n)$.  Since $w'$ arises from $w$ by permuting its letters according to $\sigma$, we see that $w' \in \NN^{*n} \setminus K$.  Hence we have $\psi(w(X_{\sigma(1)}, \dotsc, X_{\sigma(n)})) = 0$.  Filling in this information into equation (\ref{eq:invariance}), we see that its left- and right-hand side are equal to zero for $w \in \NN^{*n} \setminus K$.

  There is a natural $\Gamma_n$-grading of $\CC \langle X_1, X_2, \dotsc \rangle$ which grades $w(X_1,\dotsc, X_n)$ by $w(g_1, \dotsc, g_n)$ for all ${w \in \NN^{*n}}$ and which grades $X_k$ by $e$ for $k \geq n+1$.  We have already proven that $\psi(w(X_1, \dotsc, X_n)) = 0$ for all $w \in \NN^{*n} \setminus K$.  This is equivalent to the fact that $\psi$ respects the $\Gamma_n$-grading of $\CC\langle X_1, X_2, \dotsc \rangle$.  In particular, purely graded subspaces of $\CC \langle X_1, X_2, \dotsc, \rangle$ are pairwise orthogonal with respect to $\phi$.  So the GNS-construction gives rise to a well defined $\Gamma_n$-grading of $M$.  Every element in $B$ is purely $e$-graded, because $B$ is a subalgebra of $\Wstar(x_1^2, x_2^2, \dotsc, )$.  It follows that $\rE(w(x_1, \dotsc, x_n)) = 0$ for all $w \in \NN^{*\infty} \setminus K$.  In particular, the fact that $\Gamma$ is not trivial implies that all $x_i$ have even distribution with respect to~$\rE$.  Combining this with the fact that $x_1^2, x_2^2, \dotsc,$ have an identical distribution with respect to~$\rE$, it follows that $x_1, x_2, \dotsc $ are identically distributed with respect to~$\rE$.

  Now take $w \in \NN^{*n} \cap K$.  As before, we see that for $\sigma \in \rS_n$, the word $w' \in \NN^{*n}$ arising from a permutation of letters of $w$ according $\sigma$ satisfies $w(X_{\sigma(1)}, \dotsc, X_{\sigma(n)}) = w'(X_1, \dotsc, X_n)$.  Since $K$ is invariant under permutation of letters, we obtain $w' \in K$ and hence $w(g_{\sigma(1)}, \dotsc, g_{\sigma(n)}) = e$.  This shows
  \begin{multline}
    \label{eq:no-grading}
    \sum_{\sigma \in \rS_n} \psi(w(X_{\sigma(1)}, \dotsc, X_{\sigma(n)})) u_{w(g_{\sigma(1)}, \dotsc, g_{\sigma(n)})} p_{\sigma(1) 1} \dotsm \, p_{\sigma(n) n} \\
    =
    \sum_{\sigma \in \rS_n} \psi(w(X_{\sigma(1)}, \dotsc, X_{\sigma(n)})) p_{\sigma(1) 1} \dotsm \, p_{\sigma(n) n}
    \eqstop
  \end{multline}
  For all $\sigma \in \rS_n$, we have
  \begin{align*}
    \psi(w(X_1, \dotsc, X_n))
    & =
    \phi(w(x_1, \dotsc, w_n)) \\
    & = 
    \phi(\rE(x_1^{e_1(w)})  \dotsm \, \rE(x_n^{e_n(w)}))
    & \text{(assumption on $x_1, \dotsc, x_n$)} \\
    & = 
    \phi( \rE(x_{\sigma(1)}^{e_1(w)})  \dotsm \, \rE(x_{\sigma(n)}^{e_n(w)}))
    & \text{($x_1, \dotsc, x_n$ identically distributed wrt $\rE$}) \\
    & = \psi(w(X_{\sigma(1)}, \dotsc, X_{\sigma(n)}))
    \eqstop
  \end{align*}
  Using this formula and equation (\ref{eq:no-grading}), we obtain
  \begin{align*}
    & \quad
    \sum_{\sigma \in \rS_n} \psi(w(X_{\sigma(1)}, \dotsc, X_{\sigma(n)})) u_{w(g_{\sigma(1)}, \dotsc, g_{\sigma(n)})} p_{\sigma(1) 1} \dotsm \, p_{\sigma(n) n} \\
    & =
    \psi(w(X_1, \dotsc, X_n))    \sum_{\sigma \in \rS_n}  p_{\sigma(1) 1} \dotsm \, p_{\sigma(n) n} \\
    & =
    \psi(w(X_1, \dotsc, X_n))
    \eqstop
  \end{align*}
  This shows that $\psi$ is invariant under the action of $\Cstar(\Gamma_n) \Join \cont(S_n)$.

  Assume (ii). For all $n \in \NN$ and all $w \in \NN^{*n}$, we have
  \begin{equation}
    \label{eq:invariance-2}
    \psi(w(X_1, \dotsc, X_n))
    =
    \sum_{\sigma \in \rS_n} \psi(w(X_{\sigma(1)}, \dotsc, X_{\sigma(n)})) u_{w(g_{\sigma(1)}, \dotsc, g_{\sigma(n)})} p_{\sigma(1) 1} \dotsm \, p_{\sigma(n) n}
    \eqstop
  \end{equation}
  Let $w \in \NN^{*n} \setminus K$.  For $\sigma \in \rS_n$, we see as before that $w(g_{\sigma(1)}, \dotsc, g_{\sigma(n)}) \neq e$.  So (\ref{eq:invariance-2}) can only be true if $\psi(w(X_1, \dotsc, X_n)) = 0$.
  Since $g_k^2 = e$ for all $k \in \{1, \dotsc, n\}$,  we have $w(g_1^2, \dotsc, g_n^2) = e$ for all $n \in \NN^{*n}$.  Hence equation (\ref{eq:invariance-2}) for words of length $2n$ implies
  \begin{equation*}
    \psi(w(X_1^2, \dotsc, X_n^2))
    =
    \sum_{\sigma \in \rS_n} \psi(w(X_{\sigma(1)}^2, \dotsc, X_{\sigma(n)}^2)) p_{\sigma(1) 1} \dotsm \, p_{\sigma(n) n}
    \eqstop
  \end{equation*}
  So the same formula is true if we replace $\psi$ by $\phi$ on both sides.  Recall that $B = \bigcap_{n \geq 1} \Wstar(x_n^2, x_{n+1}^2, \dotsc)$ admits the $\phi$-preserving conditional expectation $\rE:M \ra B$.  Since the $x_1^2, x_n^2, \dotsc$ is a commuting family of random variables, the classical de Finetti theorem implies that the distribution of $x_1^2, x_2^2, \dotsc$ is independent and identically distributed with respect to $\rE$.  So we showed (i), which completes the proof of the theorem.
\end{proof}

\begin{remark}
  We want to remark that the previous theorem confirms the special role of easy quantum groups as correct symmetries of non-commutative distributions.  Indeed,  it is the $\mathrm{sS}_n$-invariance of the kernel $\ker(\NN^{*n} \ra \Gamma)$ for a strongly symmetric reflection group $\Gamma$, which allows for substitution of letters and hence turns the statement $\phi(w(x_1, \dotsc, x_n)) = \phi( \rE(x_1)^{e_1(w)} \dotsm \rE(x_n)^{e_n(w)})$ into a reasonable condition.
\end{remark}

We apply Theorem \ref{thm:de-finitti} to the easy quantum groups $H_n^{[\infty]}$ and $H_n^*$ described in Section \ref{sec:hyperoctahedral-series}.  In particular, we recover the de Finetti theorem for $H_n^*$ first proved in \cite{banicacurranspeicher12-finetti}.  We take over the notion of being ``balanced'' from their work.  A word $w \in \NN^{*n}$ is balanced if and only if $w \in \ker(\NN^{*n} \ra \ZZ_2^{*n}/{\{a_ia_ja_k = a_k a_j a_i\}})$.
\begin{corollary}
  \label{cor:de-finetti-theorem-for-hyperoctahedral-series}
  Let $x_1, x_2, \dotsc $ be a sequence of non-commutative self-adjoint random variables in a \Wstar-probability space $(M, \phi)$.  Then there is a von Neumann subalgebra $B \subset M$ and a $\phi$-preserving conditional expectation $\rE:M \ra B$ such that the following statements hold.
  \begin{enumerate}
  \item If $x_i^2 x_j = x_j x_i^2$ for all $i,j \in \{1, \dotsc, n\}$, then the following are equivalent.
    \begin{itemize}
    \item $x_1^2, x_2^2, \dotsc $ is identically distributed and independent with respect to $\rE$ and $\phi(w(x_1, \dotsc, x_n)) = 0$ for all words $w \in \ker (\NN^{*n} \ra \ZZ_2^{*n})$.
    \item For all $n \in \NN$, the distribution of $x_1, \dotsc, x_n$ is invariant under $\cont(H_n^{[\infty]})$.
    \end{itemize}
  
  \item If $x_ix_jx_k = x_kx_jx_i$ for all $i,j,k \in \{1, \dotsc, n\}$, then the following are equivalent
    \begin{itemize}
    \item $x_1^2, x_2^2, \dotsc$ are identically distributed and independent with respect to $\rE$ and $\phi(w(x_1, \dotsc, x_n)) = 0$ for all non-balanced words $w \in \NN^{*n}$.
    \item For all $n \in \NN$, the distribution of $x_1, \dotsc, x_n$ is invariant under $\cont(H_n^*)$.
    \end{itemize}
  \end{enumerate}
\end{corollary}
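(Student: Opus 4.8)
The plan is to obtain the corollary as the two instances $\Gamma = \ZZ_2^{*\infty}$ and $\Gamma = \ZZ_2^{*\infty}/\{a_ia_ja_k = a_ka_ja_i\}$ of Theorem \ref{thm:de-finitti}. First I would use Proposition \ref{prop:description-hyperocthedral series} with $s = \infty$ to record the identifications $\cont(H_n^{[\infty]}) \cong \Cstar(\ZZ_2^{*n}) \Join \cont(\rS_n)$ and $\cont(H_n^*) = \cont(H_n^{(\infty)}) \cong \Cstar((\ZZ_2^{*n})/\{a_ia_ja_k = a_ka_ja_i\}) \Join \cont(\rS_n)$, so that $\Gamma_n$ is $\ZZ_2^{*n}$ in case (i) and $\ZZ_2^{*n}/\{a_ia_ja_k = a_ka_ja_i\}$ in case (ii). Both groups admit a surjection onto $\ZZ_2^{\oplus\infty}$, hence are non-trivial strongly symmetric reflection groups, and because $B = \bigcap_{m \geq 1} \Wstar(x_m^2, x_{m+1}^2, \dotsc)$ and $\rE$ depend only on the sequence $(x_i)$, the same $B$ and $\rE$ furnished by Theorem \ref{thm:de-finitti} serve both cases. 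Throughout, $K = \ker(\NN^{*\infty} \to \Gamma)$ denotes the subsemigroup from that theorem.

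Next I would verify the two standing hypotheses of Theorem \ref{thm:de-finitti}. The commutation relation $x_i^2 x_j = x_j x_i^2$ is assumed outright in case (i); in case (ii) it follows from $x_i x_j x_k = x_k x_j x_i$ by setting $j = i$. For the factorisation hypothesis, the decisive observation is that \emph{every balanced word collapses to a product of central squares}: if $w \in K$, then $e_i(w)$ is even for every $i$ (pass to the abelianisation $\ZZ_2^{\oplus\infty}$) and $w(x_1, \dotsc, x_n) = \prod_i (x_i^2)^{e_i(w)/2}$ as an element of $M$. In case (i) this is an induction on word length: a word representing the identity in $\ZZ_2^{*n}$ always contains an adjacent equal pair $x_ix_i = x_i^2$, which is central, so one extracts it and passes to the shorter word. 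In case (ii) one first uses $x_ix_jx_k = x_kx_jx_i$ to bring a balanced word into the same form before cancelling.

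The subtle point, which I expect to be the main obstacle, is that this collapse only gives $\phi(w) = \phi(\prod_i (x_i^2)^{e_i(w)/2})$, whereas the factorisation hypothesis of Theorem \ref{thm:de-finitti} asks for the stronger identity $\phi(\prod_i (x_i^2)^{e_i(w)/2}) = \phi(\prod_i \rE((x_i^2)^{e_i(w)/2}))$ (note $x_i^{e_i(w)} = (x_i^2)^{e_i(w)/2}$). This is not formal; it is precisely conditional independence of $x_1^2, x_2^2, \dotsc$ over $B$. I would circumvent it by using that the factorisation hypothesis enters the proof of Theorem \ref{thm:de-finitti} only in the implication (i) $\Rightarrow$ (ii), where condition (i) already asserts that $x_1^2, x_2^2, \dotsc$ are independent with respect to $\rE$; together with the collapse above and the $\phi$-preserving property of $\rE$, this supplies exactly the required factorisation. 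The converse (ii) $\Rightarrow$ (i) uses only invariance and the classical de Finetti theorem for the commuting family $(x_i^2)$, and never the factorisation. Hence in each case I would prove (i) $\Rightarrow$ (ii) by first deducing the factorisation from the independence in (i) and then quoting Theorem \ref{thm:de-finitti}, and (ii) $\Rightarrow$ (i) by quoting it directly.

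Finally I would translate the statement back through Proposition \ref{prop:description-hyperocthedral series}: invariance of the joint distribution under the coaction of $\Cstar(\Gamma_n) \Join \cont(\rS_n)$ becomes invariance under $\cont(H_n^{[\infty]})$ in case (i) and under $\cont(H_n^*)$ in case (ii), while the vanishing of $\phi$ on the words outside $K$ becomes the stated vanishing condition—on the words not lying in $\ker(\NN^{*n} \to \ZZ_2^{*n})$ in case (i), and on the non-balanced words in case (ii). I anticipate that the sorting step in case (ii)—checking that $x_ix_jx_k = x_kx_jx_i$ really does bring an arbitrary balanced word into the grouped form $\prod_i (x_i^2)^{e_i(w)/2}$—will require the most care, resting as it does on the combinatorial description of $\ker(\NN^{*n} \to \ZZ_2^{*n}/\{a_ia_ja_k = a_ka_ja_i\})$.
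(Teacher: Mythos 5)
Your proposal is correct and takes essentially the same route as the paper: identify $\cont(H_n^{[\infty]})$ and $\cont(H_n^*)$ with $\Cstar(\Gamma_n) \Join \cont(\rS_n)$ for $\Gamma_n = \ZZ_2^{*n}$ and $\Gamma_n = \ZZ_2^{*n}/\{a_ia_ja_k = a_ka_ja_i\}$ respectively, collapse words in the kernel into products of the central squares $x_i^2$ (with even exponents, via the abelianisation) so that independence with respect to $\rE$ yields the factorisation hypothesis of Theorem \ref{thm:de-finitti}, and then invoke that theorem; you also correctly read the vanishing condition in (i) as ranging over words \emph{outside} $\ker(\NN^{*n} \ra \ZZ_2^{*n})$, consistent with Theorem \ref{thm:de-finitti} and with case (ii). The only minor divergence is how the circularity around the factorisation hypothesis is discharged: you inspect the proof of Theorem \ref{thm:de-finitti} and note that its implication (ii)$\Rightarrow$(i) never uses that hypothesis, whereas the paper keeps the theorem as a black box by observing that invariance already forces the commuting, exchangeable family $x_1^2, x_2^2, \dotsc$ to be independent over $\rE$ by the classical de Finetti theorem, so the hypothesis is available under either of the two equivalent conditions --- both fixes are valid and rest on the same underlying observation.
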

\begin{proof}
We may assume that $M = \Wstar(x_1, x_2, \dotsc)$.  Let $B = \bigcap_{n \geq 1} \Wstar(x_n^2, x_{n+1}^2, \dotsc)$ and note that $B$ lies in the centre of $M$.  In particular, there is a $\phi$-preserving conditional expectation $\rE:M \ra B$.

  We are going to apply Theorem \ref{thm:de-finitti} to (i) and (ii).  Before starting to consider these cases one by one, let us observe the following facts.  The invariance of the $\rE$-distribution of $x_1, x_2, \dotsc$ by any of the quantum groups $H_n^{[\infty]}$ or $H_n^*$, implies that their distribution is invariant under the permutation groups $\rS_n$.  In particular, as the random variables $x_1^2, x_2^2, \dotsc$ are pairwise commuting, they are independent and identically distributed with respect to $\rE$.  Moreover, as in Theorem \ref{thm:de-finitti}, it follows that the $\rE$-distribution of $x_i$ is even for all $i$.  So $x_1, x_2, \dotsc$ are identically distributed with respect to $\rE$.

  Let us start to prove (i).  Since $x_i^2x_j = x_jx_i^2$ for all $i, j \in \{1, \dotsc, n\}$ and $K = \ker(\NN^{*n} \ra \ZZ_2^{*n})$ is the smallest subsemigroup of $\NN^{*n}$ which is invariant under $x \mapsto a_ixa_i$, we obtain inductively that $w(x_1, \dotsc, x_n) = x_1^{e_1(w)} \dotsm x_n^{e_n(w)}$ for all $w \in K$.  Furthermore, $e_1(w), \dotsc, e_n(w) \in 2\NN$.  Independence of $x_1^2, \dotsc, x_n^2$ with respect to $\rE$ implies that
  \begin{equation*}
    \phi(w(x_1, \dotsc, x_n))
    = 
    \phi(x_1^{e_1(w)} \dotsm x_n^{e_n(w)})
    =
    (\phi \circ \rE)(x_1^{e_1(w)} \dotsm x_n^{e_n(w)})
    =
    \phi(\rE(x_1^{e_1(w)}) \dotsm \rE(x_n^{e_n(w)}))
    \eqcomma
  \end{equation*}
  for all $w \in K$ on $n$ letters.  So we can apply Theorem \ref{thm:de-finitti} in order to finish the proof of (i).

  In order to apply Theorem \ref{thm:de-finitti} to the situation in (ii), we need to check that $\phi(w(x_1, \dotsc, x_n)) = \phi(\rE(x_1^{e_1(w)}) \dotsm \rE(x_n^{e_n(w)}))$ for all balanced words $w \in \NN^{*n}$.  If $w \in \NN^{*n}$ is a balanced word, then $x_ix_jx_k = x_kx_jx_i$ for all $i,j,k$ implies $w(x_1, \dotsc, x_n) = x_1^{e_1(w)} \dotsm x_n^{e_n(w)}$.  So
  \begin{equation*}
    \phi(w(x_1, \dotsc, x_n))
    =
    \phi(x_1^{e_1(w)} \dotsm x_n^{e_n(w)})
    =
    (\phi \circ \rE)(x_1^{e_1(w)} \dotsm x_n^{e_n(w)}))
    =
    \phi(\rE(x_1^{e_1(w)}) \dotsm \rE(x_n^{e_n(w)}))
    \eqcomma
  \end{equation*}
  where the last equality follows from the fact that $e_i(w) \in 2\NN$ for all $i \in \{1, \dotsc, n\}$ and independence of $x_1^2, \dotsc, x_n^2$ with respect to $\rE$.  So we can indeed apply Theorem \ref{thm:de-finitti}.  This finishes the proof.
 
\end{proof}

%%% Local Variables: 
%%% mode: latex
%%% TeX-master: "semi-direct-product-case"
%%% End:
% \input{bibliography}

\bibliographystyle{mybibtexstyle}
\bibliography{easy-quantum-groups-classification}

{\small \parbox[t]{200pt}{Sven Raum\\ ENS Lyon \\
    UMPA UMR 5669 \\ F-69364 Lyon cedex 7 \\ France
   \\ {\footnotesize sven.raum@ens-lyon.fr}}
\hspace{15pt}
\parbox[t]{200pt}{Moritz Weber\\ Saarland University, Fachbereich Mathematik\\
   Postfach 151150\\ 66041 Saarbr{\"u}cken \\ Germany
   \\ {\footnotesize weber@math.uni-sb.de}}

\end{document}